\newtheorem{thm}{Theorem}[section]
\newtheorem{lem}[thm]{Lemma}
\newtheorem{prop}[thm]{Proposition}
\theoremstyle{remark}
\newtheorem{rem}[thm]{Remark}
\theoremstyle{definition}
\newtheoremstyle{Claim}{}{}{\itshape}{}{\itshape\bfseries}{:}{ }{#1}
\theoremstyle{Claim}
\newcommand{\Rset}{\mathbb{R}}
\newcommand{\saOaT}{\sqrt{\alpha_1 \alpha_2}}
\theoremstyle{plain}
\newtheorem{proposition}[thm]{Proposition}
\newtheorem{theorem}[thm]{Theorem}
\newtheorem{corollary}[thm]{Corollary}
\newtheorem{lemma}[thm]{Lemma}
\theoremstyle{definition}
\newtheorem{definition}[thm]{Definition}
\theoremstyle{remark}
\newtheorem{remark}[thm]{Remark}
\newcommand{\eps}{\varepsilon}
\newcommand{\dis}{\displaystyle}
\newcommand{\N}{{\mathbb{N}}}
\newcommand{\R}{{\mathbb{R}}}
\newcommand{\from}{\colon}
\newcommand{\nut}{{\tilde\nu}}
\DeclareMathOperator{\Ker}{Ker}
\DeclareMathOperator{\Range}{R}
\DeclareMathOperator{\spann}{span}
\title{Bifurcation and segregation in quadratic two-populations Mean Field Games systems}
\author{Marco Cirant and Gianmaria Verzini}
\date{\today}
\begin{document}

\maketitle

\begin{abstract}
We search for non-constant normalized solutions to the semilinear elliptic system
\[
\begin{cases}
- \nu \Delta v_i + g_i(v_j^2) v_i = \lambda_i v_i,\quad v_i>0 & \text{in $\Omega$} \\
\partial_n v_i = 0 & \text{on $\partial \Omega$}\\
\int_\Omega v_i^2\,dx = 1, & 1\leq i,j\leq 2, \quad j\neq i,
\end{cases}
\]
where $\nu>0$, $\Omega \subset \Rset^N$ is smooth and bounded, the functions $g_i$ are positive and increasing, and both
the functions $v_i$ and the parameters $\lambda_i$ are unknown.
This system is obtained, via the Hopf-Cole transformation, from a two-populations ergodic Mean Field Games
system, which describes Nash equilibria in differential games with identical players. In these models, each population consists of a very large number of indistinguishable rational agents, aiming at minimizing some long-time average criterion.

Firstly, we discuss existence of nontrivial solutions, using variational methods when $g_i(s)=s$, and bifurcation ones in the general case;
secondly, for selected families of nontrivial solutions, we address the appearing of segregation in the vanishing viscosity limit, i.e.
\[
\int_{\Omega} v_1 v_2 \to 0 \qquad \text{as }\nu\to0.
\]
\end{abstract}

\noindent
{\footnotesize \textbf{AMS-Subject Classification}}. {\footnotesize 35J47, 49N70, 35B25, 35B32}\\
{\footnotesize \textbf{Keywords}}. {\footnotesize Singularly perturbed problems, normalized solutions to semilinear elliptic systems, multi-population differential games}

\section{Introduction}

We consider the following semilinear elliptic system
\begin{equation}\label{mainsys}
\begin{cases}
- \nu \Delta v_1 + g_1(v_2^2) v_1 = \lambda_1 v_1 & \\
- \nu \Delta v_2 + g_2(v_1^2) v_2 = \lambda_2 v_2 & \text{in $\Omega$} \\
\int_\Omega v_1^2\,dx = \int_\Omega v_2^2\,dx = 1,\quad v_1,v_2>0\\
\partial_n v_1 = \partial_n v_2 = 0 & \text{on $\partial \Omega$}.
\end{cases}
\end{equation}
Here $\Omega\subset\R^N$ is a smooth bounded domain, normalized in such a way that
\[
|\Omega| = 1,
\]
$\nu>0$, and both the functions $v_i$ and the parameters $\lambda_i$ are unknown.
The interaction functions $g_i \in C^2([0, \infty))$ satisfy
\begin{equation}\label{Gass}
\begin{split}
\bullet\quad&C^{-1}_g s \le g_i(s) \le C_g s \qquad \forall s \ge 0,\\
\bullet\quad&g_i\text{ is strictly increasing, } \quad g_i'(1) > 0,
\end{split}
\end{equation}
for some $C_g > 0$ ($i=1,2$).

The elliptic system \eqref{mainsys} arises in the context of Mean Field Games (briefly MFG) theory. MFG is a branch of Dynamic Games which has been proposed independently by Lasry, Lions \cite{jeux1,jeux2,LasryLions}
and Caines, Huang, Malham\'e \cite{HCM,HCM2} in the engineering community, with the aim of modeling and analyzing decision processes involving a very large number of indistinguishable rational agents. Here, we focus on MFG with two competing populations, where every individual of the $i$-th population ($i=1,2$) is represented by a typical agent, and whose state is driven by the controlled stochastic differential equation
\[
d X^i_s = -a^i_s ds + \sqrt{2 \nut} \, d B^i_s,
\]
where $B^i_s$ are independent Brownian motions. The agent chooses her own velocity $a^i_s$ in order to minimize a cost of long-time-average form
\[
\mathcal{J}^i(X^i_0, a^1, a^2) = \liminf_{T \rightarrow \infty} \frac{1}{T} \int_0^T \mathbb{E}  \left[\frac{|a^i_s|^2}{2} + g_i(\left(\hat{m}_{ j})_s\right)\right] ds,
\]
where $\hat{m}_{ j}$ denotes the empirical density of the players belonging to the other population (i.e. $j=3-i$). It has been shown (see in particular \cite{MR3127148}) that equilibria of the game (in the sense of Nash) are captured by the following system of non-linear elliptic equations
\begin{equation}\label{MFG}
\begin{cases}
- \nut \Delta u_i(x) + \frac{1}{2} {|\nabla u_i(x)|^2} + \lambda_i = g_i(m_{ j}(x))  \\
- \nut \Delta m_i(x) -{\rm div}(\nabla u_i(x) \, m_i(x)) = 0  & \text{in $\Omega$}  \\
\int_\Omega m_i dx = 1, \, m_i > 0, & i=1,2.
\end{cases}
\end{equation}
The unknowns $u_i, \lambda_i$ provide the value functions of typical players and the average costs respectively. On the other hand, the unknowns $m_i$ represent the stationary distributions of players of the $i$-th population implementing the optimal strategy, that is the long time behavior of agents playing in an optimal way. We suppose that the state $X^i_s$ is subject to reflection at $\partial \Omega$; this motivates the Neumann boundary conditions.

Note that the individual cost $\mathcal{J}^i$ is increasing with respect to $\hat{m}_{ j}$, as we are supposing that $g_i$ is increasing. In other words, every agent is lead to avoid regions of $\Omega$ where an high concentration of competitors is present. For this reason, our MFG model is expected to show phenomena of segregation between the two populations. In particular, segregation should arise distinctly in the vanishing viscosity regime, namely when the Brownian noise (whose intensity is controlled by $\nut$) becomes negligible with respect to interactions. We will explore this aspect in terms of qualitative properties of the two distributions $m_1, m_2$.

Another key feature of this model is the quadratic dependence of the cost $\mathcal{J}^i$ with respect to the velocity $\alpha_i$. It has been pointed out (see \cite{jeux1,LionsVideo}) that the so-called Hopf-Cole transformation partially decouples the equations in \eqref{MFG}, reducing the number of the unknowns. Precisely, if we let
\[
v_i^2 := m_i = e^{-u_i/\nut}\qquad \text{and}\qquad\nu = 2\nut^2
\]
then \eqref{MFG} becomes \eqref{mainsys}. We will therefore consider \eqref{mainsys} and transpose the obtained results to the original system \eqref{MFG}.

Before proceeding with the analysis of the reduced system \eqref{mainsys}, a few bibliographical remarks are in order. First of all, while the single population case has received a considerable attention, few papers deal with mathematical aspects of the multi-population setting. We mention that a preliminary study of \eqref{mainsys}-\eqref{MFG} has been made in \cite{MR3333058}, while a non-stationary version of \eqref{MFG} is considered in \cite{LachapelleWolfram}. The latter work provides also a motivation for \eqref{MFG} based on pedestrian crowd models. Our MFG system can be also seen as a simplified version of the population models presented in \cite{ABC}.

Since $|\Omega|=1$,
\[
v_1\equiv v_2\equiv1, \qquad \lambda_1=g_1(1), \quad \lambda_2=g_2(1)
\]
is a solution of \eqref{mainsys} for every value of $\nu$. We will refer to
it (or, with some abuse,
to the pair $(v_1,v_2)\equiv(1,1)$) as the \emph{trivial} (or
\emph{constant}) solution. The aim of our investigation is twofold:
firstly, to show the existence of families, indexed by $\nu$, of nontrivial
\emph{Nash equilibria} for \eqref{mainsys}; secondly, to analyze possible
\emph{segregation} phenomena for such families, as $\nu\to0$.
\begin{definition}\label{def:Nash}
The pair $(v_1,v_2)$ is a \emph{Nash equilibrium} for \eqref{mainsys}
if each $v_i$
achieves
\[
\lambda_i:=\inf\left\{\int_\Omega\left[\nu|\nabla w |^2 + g_i(v_j^2)w^2\right] \,dx : w\in
H^1(\Omega),\,\int_\Omega w^2\,dx =1\right\}.
\]
\end{definition}
It is easy to show (see Lemma \ref{lem:Nash_iff_sol} ahead) that a pair $(v_1,v_2)$ is a Nash equilibrium
if and only if (up to a change of sign of its components) it solves \eqref{mainsys} with multipliers $(\lambda_1,\lambda_2)$.
\begin{definition}\label{def:segr}
We say that a set of solutions
\[
\Sigma \subset
\left\{(\nu, v_1, v_2) \in \R \times C^{2,\alpha}(\overline{\Omega})\times C^{2,\alpha}(\overline{\Omega})  : \text{$(\nu, v_1,v_2)$
satisfies \eqref{mainsys} for some $(\lambda_1,\lambda_2)$} \right\}
\]
\emph{segregates} if it contains sequences $\{(\nu_n, v_{1,n}, v_{2,n})\}_{n}$ with $\nu_n \rightarrow 0$, and for
every such sequence it holds
\[
\int_\Omega v_{1,n} v_{2,n} \rightarrow 0 \quad \text{as $n \rightarrow \infty$}.
\]
\end{definition}
One important feature of system \eqref{mainsys} is that its unknowns are
both the functions $v_i$, which are required to be normalized (in the $L^2$
sense), and the parameters $\lambda_i$. Despite the large literature devoted
to existence results for semilinear elliptic systems, only few papers deal
with normalized solutions, mainly when searching for solitary waves
associated to nonlinear Schr\"odinger systems
\cite{MR2928850,MR3318740,MR3393268,Bartsch:2015ab,Bartsch:2015aa}.
Note that all these papers are based on variational methods, since the
systems they consider are of gradient type. This is not the case for
\eqref{mainsys}, except when the interactions $g_i$ are linear functions.

On the other hand, segregation issues have received much attention in the
last decade, and by now a large amount of literature
is dedicated to this subject, see e.g.
\cite{MR1939088,MR2090357,MR2146353,MR2278412,MR2393430,MR2384550,MR2599456,MR2831712,MR3062741,MR3116007,MR3375537}, the recent survey \cite{Soave:2015aa}, and references therein. Mainly
two types of competitions have been
widely investigated, namely the Lotka-Volterra type (e.g. $g_i(s)=a_i
\sqrt{s}$), and again the variational one. Furthermore in these papers
segregation (as defined in Definition \ref{def:segr}) is a first easy
step, while all the effort is done to show that the convergence of $v_1v_2$
to $0$ is very much stronger than merely $L^1$. Conversely, in our situation,
even the
$L^1$ convergence is not clear at all, mainly due to the unknown behavior
of the parameters $\lambda_i$. For instance, the set of trivial solutions
does not segregate at all. Actually, this is one of the main difficulties we
have to face.

Motivated by the above discussion, we first treat the variational case
\[
g_i(s)=\gamma_i s,\qquad\text{for some }\gamma_i>0.
\]
In such a case, as we mentioned, \eqref{mainsys} has a gradient structure, at least in dimension $N\leq3$:
Nash equilibria can be obtained as critical points of the functional
\[
I_\nu (v_1, v_2) = \int_\Omega\left[\frac{1}{\gamma_1}\,|\nabla v_1 |^2 +
\frac{1}{\gamma_2}\,|\nabla  v_2 |^2 + \frac{1}{\nu}\, v_1^2 v_2^2\right]\,dx
\]
constrained to the manifold
\[
M=\left\{(v_1,v_2)\in H^1(\Omega)\times H^1(\Omega) : \int_\Omega v_1 ^2\,dx = \int_\Omega v_1 ^2\,dx =1 \right\}.
\]
As a consequence, existence of solutions can be obtained by direct minimization of $I_\nu|_{{M}}$.
Regarding the asymptotic behaviour of such minimizers, using techniques contained in \cite{MR2928850} we can show $\Gamma$-convergence to the following limiting problem:
\begin{equation}\label{eq:Gamma_limit_pb}
\min\left\{\int_\Omega\left[\frac{1}{\gamma_1}\,|\nabla v_1 |^2 +
\frac{1}{\gamma_2}\,|\nabla  v_2 |^2 \right]\,dx: (v_1,v_2)\in {M},\ v_1\cdot v_2\equiv0,\right\}.
\end{equation}
It can be proved that such minimum is achieved, and, among other properties, that any minimizer $(V_1,V_2)$ is such that $V_1\sqrt{\gamma_2}-V_2\sqrt{\gamma_1}
\in C^{2,\alpha}(\overline{\Omega})$, for every $0<\alpha<1$ (see Proposition \ref{prop:limiting_problem} ahead).
As a matter of fact, we can prove the following.
\begin{theorem}[Variational case]\label{thm:intro_var}
Let $N\leq3$, $g_i(s)=\gamma_i s$,  $\gamma_i>0$, and let $\mu_1>0$ denote the first positive Neumann eigenvalue of $-\Delta$ in $\Omega$. Then, for every
\[
0 < \nu \leq \frac{\gamma_1\gamma_2}{\mu_1(\gamma_1+\gamma_2)},
\]
the minimum of $I_\nu|_{M}$ is achieved by a pair $(v_{1,\nu},v_{2,\nu})$, which is a nontrivial Nash equilibrium for \eqref{mainsys}.

Moreover, any family of minimizers exhibits segregation: up to subsequences,
\[
v_{i,\nu} \to V_{i} \  \text{in }H^1(\Omega) \cap
C^{\alpha}(\overline{\Omega})\qquad\text{as }\nu\to0,
\]
for every $\alpha<1$, where $(V_1,V_2)$ achieves \eqref{eq:Gamma_limit_pb}.
\end{theorem}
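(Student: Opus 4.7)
The plan is to apply the direct method of the calculus of variations to the constrained minimization of $I_\nu$ on $M$, and then analyse the vanishing viscosity limit of the resulting family through a $\Gamma$-convergence-type argument built on the limiting problem \eqref{eq:Gamma_limit_pb}.

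For fixed $\nu$ in the stated range, the functional $I_\nu$ is non-negative on $M$ and, since the $L^2$ norms are fixed at $1$, its Dirichlet part alone controls the $H^1$-norm of any minimizing sequence. By Rellich-Kondrachov and the assumption $N\leq 3$ (which yields a compact embedding $H^1(\Omega)\hookrightarrow L^4(\Omega)$), I extract a subsequence $(v_{1,n},v_{2,n})\rightharpoonup (v_{1,\nu},v_{2,\nu})$ weakly in $H^1$ and strongly in $L^4$. The coupling $(v,w)\mapsto\int_\Omega v^2w^2$ is continuous in $L^4\times L^4$ and the Dirichlet term is weakly lower semicontinuous, so the infimum is achieved. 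Evenness of $I_\nu$ in each component allows me to take $v_{i,\nu}\geq 0$, and strong $L^2$ convergence preserves the normalisation. Non-triviality is checked by comparing $I_\nu(1,1)=1/\nu$ with $I_\nu$ along the path $t\mapsto\bigl((1+t\phi)/\sqrt{1+t^2},\,(1-t\phi)/\sqrt{1+t^2}\bigr)$, where $\phi$ is a first non-constant Neumann eigenfunction with $\int_\Omega\phi=0$ and $\int_\Omega\phi^2=1$: a direct second-order expansion gives $I_\nu(t)=1/\nu+t^2[\mu_1(\gamma_1^{-1}+\gamma_2^{-1})-4/\nu]+O(t^4)$, which is strictly below $1/\nu$ for small $t$ whenever $\nu$ lies below the stated threshold. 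Lagrange multipliers then produce $\lambda_1,\lambda_2$ such that $(v_{1,\nu},v_{2,\nu})$ solves \eqref{mainsys} weakly; elliptic bootstrap gives $C^{2,\alpha}(\overline\Omega)$ regularity, and the strong maximum principle together with Hopf's lemma applied to the linear equation $-\nu\Delta v_{i,\nu}+(\gamma_i v_{j,\nu}^2-\lambda_i)v_{i,\nu}=0$ yield $v_{i,\nu}>0$ on $\overline\Omega$. Hence the minimizer is a classical non-trivial Nash equilibrium by Lemma~\ref{lem:Nash_iff_sol}.

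For the vanishing viscosity limit, the central input is a $\Gamma$-upper bound: given any admissible $(V_1,V_2)$ for \eqref{eq:Gamma_limit_pb}, I construct a recovery family $(\tilde V_{1,\nu},\tilde V_{2,\nu})\in M$ with $I_\nu(\tilde V_{1,\nu},\tilde V_{2,\nu})\to\int_\Omega[\gamma_1^{-1}|\nabla V_1|^2+\gamma_2^{-1}|\nabla V_2|^2]\,dx$. Following \cite{MR2928850}, this is done by thickening and smoothing in a shrinking neighbourhood of the free interface $\{V_1V_2=0\}$, exploiting the $C^{2,\alpha}$ regularity of $V_1\sqrt{\gamma_2}-V_2\sqrt{\gamma_1}$ supplied by Proposition~\ref{prop:limiting_problem}. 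Using this recovery family as a competitor gives $I_\nu(v_{1,\nu},v_{2,\nu})\leq C$ independently of $\nu$, hence a uniform $H^1$ bound together with $\int_\Omega v_{1,\nu}^2 v_{2,\nu}^2=O(\nu)$. Weak $H^1$-compactness and strong $L^4$-convergence produce a limit $(V_1^*,V_2^*)\in M$ with $V_1^*V_2^*\equiv 0$, and hence admissible for \eqref{eq:Gamma_limit_pb}; weak lower semicontinuity of the Dirichlet functional combined with the recovery-sequence upper bound shows that $(V_1^*,V_2^*)$ is a minimizer and that the Dirichlet energies themselves converge. The convergence then upgrades to strong in $H^1$, and uniform elliptic estimates for \eqref{mainsys} transfer it to $C^\alpha(\overline\Omega)$ for every $\alpha<1$, giving also the segregation property in the sense of Definition~\ref{def:segr} via Cauchy-Schwarz.

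The main obstacle is the $\Gamma$-upper bound: one must approximate a segregated pair of $L^2$-normalised non-negative functions by smooth, everywhere strictly positive, $L^2$-normalised pairs whose blown-up interaction $\nu^{-1}\int_\Omega\tilde V_{1,\nu}^2\tilde V_{2,\nu}^2$ stays bounded as $\nu\to 0$. The $C^{2,\alpha}$ control on $V_1\sqrt{\gamma_2}-V_2\sqrt{\gamma_1}$ across the free boundary is what makes possible a clean transition layer in the spirit of \cite{MR2928850}; simultaneously enforcing the two $L^2$ normalisations during this local modification is the delicate point, which is handled by an explicit rescaling of each component after the local surgery.
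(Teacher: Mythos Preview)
Your outline is broadly correct and follows the same two-step scheme as the paper (direct minimization for fixed $\nu$, then $\Gamma$-type convergence as $\nu\to0$), but there is one place where you work much harder than necessary and one place where you gloss over the genuinely hard step.

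\textbf{Nontriviality.} Your second-variation argument along $t\mapsto\bigl((1+t\phi)/\sqrt{1+t^2},(1-t\phi)/\sqrt{1+t^2}\bigr)$ is correct and in fact yields the larger range $\nu<4\gamma_1\gamma_2/(\mu_1(\gamma_1+\gamma_2))$, so it certainly covers the stated threshold. The paper instead uses a global competitor: taking $\psi_1$ a first positive Neumann eigenfunction and normalizing $(a_+\psi_1^+,a_-\psi_1^-)$ to lie in $M$, one computes $I_\nu(a_+\psi_1^+,a_-\psi_1^-)=\mu_1(\gamma_1^{-1}+\gamma_2^{-1})$ exactly (the interaction term vanishes since $\psi_1^+\psi_1^-\equiv0$), and compares with $I_\nu(1,1)=1/\nu$. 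This gives the threshold of the statement directly, with equality allowed because $(a_+\psi_1^+,a_-\psi_1^-)$ cannot solve \eqref{mainsys} and hence is never the minimizer.

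\textbf{The $\Gamma$-upper bound.} Here you over-engineer. There is no need for surgery near the free boundary: if $(V_1,V_2)\in M$ is admissible for \eqref{eq:Gamma_limit_pb} then $V_1V_2\equiv0$, so $I_\nu(V_1,V_2)=\int_\Omega[\gamma_1^{-1}|\nabla V_1|^2+\gamma_2^{-1}|\nabla V_2|^2]$ for \emph{every} $\nu>0$; the constant family is already a recovery sequence. In the paper's variables this is just the monotonicity $J_{\beta}\le J_\infty$, giving $c_\beta\le c_\infty$ at once and hence the uniform $H^1$ bound and $\int v_{1,\nu}^2v_{2,\nu}^2=O(\nu)$ without any construction.

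\textbf{The H\"older bounds.} This is the point where your write-up has a real gap. The phrase ``uniform elliptic estimates for \eqref{mainsys}'' does not do the job: the equations are $-\nu\Delta v_i+\gamma_iv_j^2v_i=\lambda_iv_i$, and as $\nu\to0$ standard Schauder or $W^{2,p}$ estimates blow up. Uniform-in-$\nu$ H\"older bounds for strongly competing systems of this type are a substantial result; the paper invokes \cite[Theorem~1.1]{MR2599456}, whose blow-up arguments carry over to the Neumann setting by replacing odd half-space extensions with even ones (this is spelled out in the paper). Once the uniform $C^{0,\alpha}$ bound is in hand, Ascoli upgrades the $H^1$ convergence to $C^{0,\alpha}$ convergence. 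You should either cite this result and indicate the Neumann adaptation, or supply an independent argument; simply appealing to elliptic regularity is not enough.
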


Turning to the general case, since \eqref{mainsys} has no variational structure, one is lead to search for solutions using topological methods.
In particular, it is  natural to use bifurcation theory to find nontrivial solutions $(\nu,v_1,v_2)$ branching off from the trivial
ones
\[
\mathcal{T} = \left\{(\nu,1,1):\nu>0\right\}\subset \R \times C^{2,\alpha}(\overline{\Omega})\times C^{2,\alpha}(\overline{\Omega}).
\]
We denote by $\mathcal{S}$ the closure of the set of nontrivial solutions of \eqref{mainsys}, so that a bifurcation point is a point of $\mathcal{S}\cap\mathcal{T}$. The classical bifurcation theory by Rabinowitz
\cite{MR0301587,MR0288640} can be applied
to our setting to obtain the following.
\begin{theorem}\label{thm:intro_bif}
Let $\mu^*>0$ denote a positive Neumann eigenvalue of $-\Delta$ in $\Omega$, and
\[
\nu^* = \frac{2\sqrt{g'_1(1)g'_2(1)}}{\mu^*}.
\]
\begin{itemize}
\item
If $\mu^*$ has odd multiplicity then there exists a continuum $\mathcal{C}^*\subset\mathcal{S}$ such that $(\nu^*,1,1)\in\mathcal{C}^*$ and
\begin{itemize}
\item either $(\nu^{**},1,1)\in\mathcal{C}^*$, where $\nu^{**}=2\sqrt{g'_1(1)g'_2(1)}/\mu^{**}$
and $\mu^{**}\neq \mu^*$ is another positive Neumann eigenvalue,
\item or $\mathcal{C}^*$ is unbounded; furthermore, in dimension $N\leq3$,
$\mathcal{C} \cap \{(\nu,v_1,v_2): \nu\geq \bar \nu\}$ is bounded for every $\bar\nu>0$,
and $\mathcal{C}$ contains a sequence
$(\nu_n,v_{1,n},v_{2,n})$ such that, as $n\to+\infty$,
\[
\nu_n\to0,\qquad \|(v_{1,n},v_{2,n})\|_{C^{2,\alpha}} \to +\infty.
\]
\end{itemize}
\item
If $\mu^*$ is simple (with eigenfunction $\psi^*$) then the set of non-trivial
solutions is, near $(\nu^*,1,1)$, a unique smooth curve with parametric representation
\[
\nu = \nu(\eps), \quad v = (1,1) + \eps v^* + o(\eps),
\]
where $\nu(0)= \nu^*$ and $v^* = \left(-\psi^*\sqrt{g'_1(1)} , \psi^*\sqrt{g'_2(1)} \right)$.
\end{itemize}
\end{theorem}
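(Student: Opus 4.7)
The plan is to apply the classical global bifurcation theorem of Rabinowitz \cite{MR0301587} in the odd-multiplicity case and the Crandall--Rabinowitz local bifurcation theorem \cite{MR0288640} in the simple case, after recasting \eqref{mainsys} as an operator equation. Setting $v_i = 1 + w_i$ with $w_i$ small and using the identity $\lambda_i = \nu\int_\Omega|\nabla v_i|^2 + \int_\Omega g_i(v_j^2) v_i^2$, obtained by testing the $i$-th equation with $v_i$ and exploiting the $L^2$ normalization, one can eliminate the unknown multipliers. Inverting a shifted Neumann Laplacian $(-\nu\Delta+1)^{-1}$ then recasts the PDE as a fixed-point equation $w = T(\nu,w)$ with $T$ compact, thanks to the compact embedding $C^{2,\alpha}(\overline\Omega) \hookrightarrow C^{0,\alpha}(\overline\Omega)$; the $L^2$ constraints $\int_\Omega v_i^2 = 1$ can be handled either by restricting to a local chart of the corresponding nonlinear manifold or by adjoining them as auxiliary scalar equations.

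The Fr\'echet derivative $D_w T(\nu,0)$, restricted to the tangent space to the normalization manifold at $(1,1)$ (that is, to pairs $(\phi_1,\phi_2)$ with $\int_\Omega \phi_i = 0$), corresponds to the linear problem $-\nu\Delta\phi_i + 2g_i'(1)\phi_j = 0$ in $\Omega$ with homogeneous Neumann boundary conditions. Expanding $\phi_i$ in an $L^2$ basis of mean-zero Neumann eigenfunctions $\psi_k$ of $-\Delta$ associated with positive eigenvalues $\mu_k$, on each eigenspace one obtains a $2\times 2$ algebraic system with determinant $(\nu\mu_k)^2 - 4g_1'(1) g_2'(1)$. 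Hence $\mathrm{Id} - D_w T(\nu,0)$ fails to be invertible precisely at $\nu = \nu^*_k := 2\sqrt{g_1'(1)g_2'(1)}/\mu_k$; at each such $\nu^*_k$ the kernel has dimension equal to the multiplicity of $\mu_k$, and on the $\mu_k$-eigenspace it is spanned by $(-\sqrt{g_1'(1)}, \sqrt{g_2'(1)})\psi_k$, in agreement with the direction $v^*$ stated in the theorem.

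For $\mu^*$ of odd multiplicity the Leray--Schauder index of the trivial fixed point $w=0$ changes parity as $\nu$ crosses $\nu^*$, and Rabinowitz's theorem yields a connected continuum $\mathcal{C}^* \subset \mathcal{S}$, containing $(\nu^*,1,1)$, that satisfies the stated dichotomy. To convert the unbounded alternative into the quantitative form given in the theorem, I would establish a priori bounds: for every $\bar\nu > 0$, in dimension $N\leq 3$, all solutions of \eqref{mainsys} with $\nu \geq \bar\nu$ are uniformly bounded in $C^{2,\alpha}$. This combines an $L^\infty$ bound by Brezis--Kato/Moser iteration (using $g_i(s)\leq C_g s$ and the subcritical Sobolev embeddings in low dimension), a bound $|\lambda_i|\leq C$ from the variational identity above together with $H^1$ estimates (obtained by testing with $v_i$), and Schauder theory. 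A uniqueness argument for large $\nu$ (e.g.\ subtracting the trivial equation, testing with $v_i - 1$ and invoking the Poincar\'e--Wirtinger inequality, exploiting that solutions must lie close to $(1,1)$ for $\nu$ large) prevents $\nu$ from escaping to infinity along $\mathcal{C}^*$. Taken together, these estimates force unboundedness to manifest exactly as the stated blow-up along a sequence with $\nu_n\to 0$.

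For simple $\mu^*$ the kernel is one-dimensional and spanned by $v^* = (-\sqrt{g_1'(1)}, \sqrt{g_2'(1)})\psi^*$; the remaining hypothesis of the Crandall--Rabinowitz theorem reduces to the transversality condition $\partial_\nu[(\nu\mu^*)^2 - 4 g_1'(1) g_2'(1)]_{\nu = \nu^*} = 2\nu^*(\mu^*)^2 \neq 0$, which is immediate. The theorem then provides the unique smooth local parameterization $(\nu(\eps), v(\eps))$ with the prescribed tangent. The main technical obstacle is not the bifurcation machinery, which is classical, but the careful functional-analytic setup that simultaneously accommodates the unknown multipliers and the $L^2$ normalization, together with the a priori estimates for $\nu \geq \bar\nu$ and the uniqueness statement for $\nu$ large, which are precisely what is needed to rule out all scenarios of unboundedness other than $\nu_n\to 0$ with $\|(v_{1,n},v_{2,n})\|_{C^{2,\alpha}}\to\infty$.
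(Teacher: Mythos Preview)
Your proposal follows the same bifurcation strategy as the paper (Rabinowitz for odd multiplicity, Crandall--Rabinowitz for simple eigenvalues), and it would work, but the paper's functional setup is tighter in several respects. Rather than eliminating $\lambda_i$ via the energy identity and then handling the $L^2$ constraint by a chart or by auxiliary equations, the paper defines the fixed-point map $u = G(\beta,v)$ so that each $u_i$ is the \emph{first} Neumann eigenfunction of $-\Delta + \beta g_i(v_j^2)$, automatically positive and $L^2$-normalized, with $\beta = 1/\nu$ as the parameter; smooth dependence of simple eigenpairs on the potential makes $G\in C^2$, and the linearization comes out directly in the form $\beta L$ with $L$ independent of $\beta$ (Lemma~\ref{lem:smooth_G}), which is exactly the shape needed in Theorem~\ref{thm:Rabinowitz_odd}. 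The a priori estimates are also cheaper than your Brezis--Kato/Moser route: testing the $i$-th equation with $v_i$ and with $1/v_i$ (Lemma~\ref{lem:prel_estimates}) gives $\nu\int_\Omega|\nabla v_i|^2 \le \lambda_i \le C_g$ directly, so the $H^1$ bound for $\nu\ge\bar\nu$ is immediate and, by subcritical bootstrap in $N\le 3$, so is the $C^{2,\alpha}$ bound. The exclusion of $\nu\to\infty$ is likewise shorter than a uniqueness argument: if $\beta_n\to 0$ along $\mathcal S$, the $H^1$ bound forces $v_n\to(1,1)$ in $E$, so $(0,1,1)$ would be a bifurcation point, contradicting the invertibility of $I-0\cdot L = I$ (Lemma~\ref{lem:global_bound}). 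For the simple case the paper verifies $v^*\notin\Range(I-\beta^*L)$ by the same decoupling $\psi_\pm = \sqrt{\alpha_2}v_1 \pm \sqrt{\alpha_1}v_2$ used to diagonalize $L$, which is equivalent to your determinant transversality check. One point you should not omit: along the global continuum the abstract Rabinowitz branch does not a priori respect sign, so one must argue separately (via the strong maximum principle, as the paper does) that positivity of both components persists as long as $\nu>0$.
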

\begin{remark}
Sharper asymptotic expansions are provided in Remark \ref{rem:Taylor} ahead,
in case both $g_i$ are more regular.
\end{remark}
\begin{remark}\label{rem:notalwayssegregate}
To compare this theorem with the classical results by Rabinowitz, we recall that here the natural bifurcation parameter is $1/\nu$, rather than $\nu$ itself. In particular, in case infinitely many eigenvalues $\mu_n$ are odd,
we have infinitely many bifurcation points $\nu_n\to0$. As a consequence it
is easy to construct families of nontrivial solutions, jumping from branch to
branch, that not only do not
exhibit segregation, but even tend to the trivial solution as $\nu\to0$.
\end{remark}
The previous remark shows that one can not expect segregation for a generic
family of nontrivial solutions. It is then natural to ask whether segregation
occurs for the bifurcation branches above described, at least for the unbounded ones. According to Theorem \ref{thm:intro_bif}, in order to find unbounded branches of nontrivial solutions we first have to find odd eigenvalues of $-\Delta$ in $\Omega$,
and then to exclude that the corresponding branch goes back to the set of trivial solutions. Usually, in the bifurcation framework, both conditions can be
satisfied when working with the first eigenvalue of the linearized problem:
indeed, on one hand such eigenvalue is simple; on the other hand,
it is usually possible to carry over to the full branch the nodal characterization of the corresponding eigenfunction. Notice that this is not our case, since the first Neumann eigenvalue
is 0 and it does not provide a bifurcation point, while the first positive
eigenvalue $\mu_1$ is actually the second one. Another way to exploit these
ideas is to work in dimension $N=1$.
\begin{theorem}\label{thm:intro_1D}
Let $N=1$. For any $k\in\N$, $k\geq1$ there exists a continuum $\mathcal{C}_k$
of solutions, such that:
\begin{itemize}
 \item if $(\nu,v_1,v_2)\in \mathcal{C}_k$ then both $v_i$ have exactly $k-1$ critical
 points;
 \item $\overline{\mathcal{C}_k} \cap \mathcal{T} 
 = \left\{\left(
 \frac{2\sqrt{g'_1(1)g'_2(1)}}{\pi^2k^2},1,1\right)\right\}$;
 \item $h\neq k$ implies $\mathcal{C}_h \cap \mathcal{C}_k = \emptyset$;
 \item each $\mathcal{C}_k$ contains sequences with $\nu\to0$ and
 $\|(v_1,v_2)\|_{C^{2,\alpha}} \to +\infty$;
 \item each $\mathcal{C}_k$ segregates.
\end{itemize}
\end{theorem}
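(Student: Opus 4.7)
The plan is to apply Theorem \ref{thm:intro_bif} in the 1D setting. On $\Omega=(0,1)$, the Neumann eigenvalues of $-\Delta$ are $\mu_k = \pi^2 k^2$, $k\geq 0$, each simple with eigenfunction $\psi_k(x)=\cos(\pi k x)$. For each $k\geq 1$ the simple-eigenvalue part of Theorem \ref{thm:intro_bif} produces a local smooth curve of nontrivial solutions through $(\nu_k,1,1)$, where $\nu_k = 2\sqrt{g_1'(1)g_2'(1)}/(\pi^2 k^2)$, with parametric form $v_i = 1 \pm \eps\sqrt{g_{3-i}'(1)}\cos(\pi k x) + o(\eps)$. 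In particular, near the bifurcation each $v_i$ has exactly $k-1$ (nondegenerate) interior critical points. Define $\mathcal{C}_k$ as the maximal connected component of $\mathcal{S}$ containing this local branch.

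The first nontrivial step is to show that the nodal character persists along all of $\mathcal{C}_k$, i.e.\ every $(\nu,v_1,v_2)\in \mathcal{C}_k$ has both $v_i$ with exactly $k-1$ interior critical points. I would prove that the subset with this nodal character is open and closed in $\mathcal{C}_k$. Openness follows from the implicit function theorem applied at nondegenerate critical points. For closedness, a change in the count at a limit point would require a degenerate interior critical point $x_0\in(0,1)$ with $v_i'(x_0)=v_i''(x_0)=0$; evaluating the ODE at $x_0$ forces $\lambda_i=g_i(v_{3-i}^2(x_0))$, and the differentiated equation for $v_i'$, together with a Sturm-type comparison against $\psi_k$, rules this out unless $(v_1,v_2)\equiv(1,1)$. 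Therefore the only way the nodal count can fail is by reaching a point of $\mathcal{T}$; since the local expansion near any $(\nu_h,1,1)$ produces solutions with $h-1$ interior critical points, this automatically gives both $\overline{\mathcal{C}_k}\cap\mathcal{T}=\{(\nu_k,1,1)\}$ and $\mathcal{C}_h\cap\mathcal{C}_k=\emptyset$ for $h\neq k$.

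Because $\mathcal{C}_k$ cannot return to $\mathcal{T}$, the Rabinowitz alternative in Theorem \ref{thm:intro_bif} forces $\mathcal{C}_k$ to be unbounded. Since $N=1\leq 3$, $\mathcal{C}_k\cap\{\nu\geq\bar\nu\}$ is bounded for every $\bar\nu>0$, so the unboundedness is necessarily realized along a sequence with $\nu_n\to 0$ and $\|(v_{1,n},v_{2,n})\|_{C^{2,\alpha}}\to\infty$, which gives the fourth bullet.

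The final, and hardest, step is segregation. Along such a sequence $(\nu_n,v_{1,n},v_{2,n})\in\mathcal{C}_k$, the 1D nodal rigidity implies that each $v_{i,n}$ is monotone on each of at most $k$ subintervals partitioning $(0,1)$; coupled with the $L^2$-normalization this provides a compactness framework for passing to a limit $(V_1,V_2)$. The main obstacle is controlling the multipliers $\lambda_{i,n}$, whose behavior as $\nu_n\to 0$ is not a priori clear, and which in Remark \ref{rem:notalwayssegregate} were exactly what prevented segregation for arbitrary families. I expect to extract uniform bounds on $\lambda_{i,n}$ by testing the equation against well-chosen cut-offs adapted to the known nodal structure (e.g.\ supported on a single monotonicity interval of $v_{i,n}$), thereby turning the integrated equation into a relation between $\lambda_{i,n}$ and the $L^\infty$ size of $v_{3-i,n}$ on that interval. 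Once such bounds are in hand, a variational / viscosity argument analogous to the one used in Theorem \ref{thm:intro_var} shows that any weak limit satisfies $V_1 V_2\equiv 0$, and passing to the limit in $\int_\Omega v_{1,n} v_{2,n}$ by dominated convergence yields segregation.
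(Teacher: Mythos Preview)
Your outline for the first four bullets is broadly on the right track and matches the paper's strategy: use the simple-eigenvalue bifurcation to obtain local branches with $k-1$ interior critical points, propagate this nodal count along the continuum by an open--closed argument, and invoke the Rabinowitz alternative to conclude unboundedness as $\nu\to 0$. However, your justification of closedness is too sketchy. Having $v_i'(x_0)=v_i''(x_0)=0$ yields $\lambda_i=g_i(v_j^2(x_0))$, but this alone does not force triviality, and a ``Sturm-type comparison against $\psi_k$'' is not well-defined for this coupled nonlinear system ($v_i'$ satisfies an equation that still involves $v_j$, so there is no single linear operator to compare with). The paper instead carries out a detailed ODE analysis of the \emph{coupled} structure (Lemmas \ref{lem:opposite_at_endpoints}--\ref{lem:1flessoogni2estremi} and Proposition \ref{prop:degree}): at any local extremum of $v_i$ the sign of $g_j(v_i^2)-\lambda_j$ is prescribed, between two critical points of $v_i$ of the same convexity type there must be an inflection of $v_j$, and from this one deduces that all critical points are nondegenerate and that both components share the \emph{same} number of interior critical points. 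The coupling is essential; treating the two equations separately will not close the argument.

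The real gap is in the segregation step. Your plan---bound $\lambda_{i,n}$ by testing, then invoke a ``variational/viscosity argument analogous to Theorem \ref{thm:intro_var}''---cannot work here: for general $g_i$ the system has no variational structure, so the $\Gamma$-convergence machinery of Section \ref{sec:var} is unavailable, and $\lambda_{i,n}$ is already bounded by Lemma \ref{lem:prel_estimates} without that implying anything. What is actually needed (and what the paper does in Section \ref{sec:1dsegr}) is to show that at least one $\lambda_{i,n}\to 0$, after which segregation follows immediately from Corollary \ref{coro:suff_cond_segr}. This is done by blow-up analysis. First (Proposition \ref{noncollapse}) one rules out $(v_{1,n},v_{2,n})\to(1,1)$ with $\nu_n\to 0$, via a rescaling around the point where $\|v_i-1\|_\infty$ is attained and classification of the bounded, finitely-oscillating solutions of the limiting linear system. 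Then one splits into cases: if both $\|v_{i,n}\|_\infty$ stay bounded, a rescaling around the global maximum combined with a nonlinear Liouville theorem (Lemma \ref{liouville2}) forces both $\lambda_{i,n}\to 0$; if some $\|v_{i,n}\|_\infty\to\infty$, one locates---using that along $\mathcal{C}_k$ the number of local maxima is fixed and finite---an interval of definite length on which one component blows up while the other stays bounded (Lemma \ref{lem:intervallino}), rescales there, and uses an exponential comparison (Lemma \ref{comparison}) to show the bounded component is crushed to zero, again forcing one $\lambda_{i,n}\to 0$. None of this is variational, and dominated convergence alone cannot deliver segregation without first establishing these blow-up limits.
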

Once segregation is obtained, we have that the segregating branches converge,
up to subsequences,
to some limiting profiles. As a consequence, some natural questions arise,
about the type of convergence as well as about the properties of the limiting
profiles. We can give the full picture in the case of the first branch.
\begin{theorem}\label{thm:intro_S1}
Let $N=1$ and $\mathcal{C}_1$ as in Theorem \ref{thm:intro_1D}. Then, any sequence $\{(\nu_n, v_n, \lambda_n)\}_{n} \subset\mathcal{C}_1$ such that $\nu_n\to0$ is uniformly bounded in Lipschitz norm, and it holds
\[
v_{i,n} \to V_{i} \  \text{in }H^1(\Omega) \cap
C^{\alpha}(\overline{\Omega})\qquad\text{as } \nu_n \to0,
\]
for every $\alpha<1$, where $(V_1,V_2)$ is the minimizer (unique up to reflections) achieving \eqref{eq:Gamma_limit_pb} with $\gamma_i=g'_i(0)
\geq C_g^{-1}$.
\end{theorem}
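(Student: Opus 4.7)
Let $(\nu_n, v_{1,n}, v_{2,n}) \in \mathcal{C}_1$ with $\nu_n \to 0$ and $\lambda_{i,n}$ the associated multipliers. By Theorem \ref{thm:intro_1D} each $v_{i,n}$ has no interior critical point, so up to the reflection $x\mapsto 1-x$ I may fix $v_{1,n}$ strictly decreasing and $v_{2,n}$ strictly increasing on $\Omega=(0,1)$; moreover $\int_\Omega v_{1,n} v_{2,n}\to 0$. Testing the Rayleigh quotient of Definition \ref{def:Nash} with $w\equiv 1$ gives $0 \le \lambda_{i,n}\le C_g$, and testing the $i$-th equation with $v_{i,n}$ yields the energy identity
\[
\nu_n \int_\Omega |v_{i,n}'|^2 + \int_\Omega g_i(v_{j,n}^2) v_{i,n}^2 = \lambda_{i,n},
\]
so in particular $\nu_n\int |v_{i,n}'|^2 \le C$. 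These are the preliminary ingredients.

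The core step is a uniform Lipschitz bound. Let $a_n \in (0,1)$ denote the unique point where $v_{1,n}(a_n)=v_{2,n}(a_n)$, well-defined by the opposite monotonicities and positivity. Since $v_{1,n}$ is decreasing with $\int v_{1,n}^2=1$, controlling $\|v_{1,n}\|_\infty=v_{1,n}(0)$ amounts to controlling the size of its effective support, and the crux is to establish $c \le a_n \le 1-c$ for some $c>0$ independent of $n$. The plan is a contradiction/comparison argument: assume $a_n\to 0$ along a subsequence; inserting into the Rayleigh quotients of Definition \ref{def:Nash} a segregated competitor pair $(w_{1,n},w_{2,n})$ built from the first mixed Neumann--Dirichlet eigenfunctions on $[0,\bar a]$ and $[\bar a,1]$ for a fixed $\bar a \in (0,1)$ (regularized over an $O(\sqrt{\nu_n})$ transition layer near $\bar a$) provides uniform upper bounds for the $\lambda_{i,n}$; these are then incompatible with the lower estimates forced by the concentration of $v_{1,n}$ on $[0,a_n]$, yielding the desired contradiction. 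The case $a_n\to 1$ is symmetric. Once $a_n \in [c,1-c]$ is secured, monotonicity and normalization yield the uniform $L^\infty$ bound, and a Bernstein-type identity---multiply the $i$-th equation by $v_{i,n}'$ and integrate up to the unique point $x_n^\ast$ where $g_i(v_{j,n}^2(x_n^\ast))=\lambda_{i,n}$, at which $|v_{i,n}'|$ attains its maximum---upgrades this to $\|v_{i,n}'\|_\infty \le C$.

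With uniform Lipschitz bounds in hand, the sequence is precompact in $C^\alpha(\overline{\Omega})$ for every $\alpha<1$ and weakly in $H^1$; extracting $v_{i,n}\to V_i$, the limit satisfies $V_i\ge 0$, $\int V_i^2=1$, and $V_1 V_2\equiv 0$ (the last by uniform convergence combined with $\int v_{1,n} v_{2,n}\to 0$). To identify $(V_1,V_2)$ as a minimizer of \eqref{eq:Gamma_limit_pb} with $\gamma_i=g_i'(0)$, I would adapt the $\Gamma$-convergence scheme of \cite{MR2928850}: on $\{V_i>0\}$ one has $v_{j,n}\to 0$ uniformly, so $g_i(v_{j,n}^2)\sim \gamma_i v_{j,n}^2$, which is precisely the coupling generating the gradient functional in the limit. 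The $\liminf$ inequality is weak $H^1$ lower semicontinuity; the $\limsup$ inequality is obtained via a recovery sequence built from a segregated competitor $(W_1,W_2)$ by smoothing across a layer of width $\sqrt{\nu_n}$, and this pair is plugged into the Nash characterization of $v_{i,n}$.

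Uniqueness of the limit up to the reflection $x\mapsto 1-x$ is an explicit 1D computation: restricting \eqref{eq:Gamma_limit_pb} to pairs supported on $[0,a]$, $[a,1]$ and using that on each half the minimizer is a multiple of the first mixed Neumann--Dirichlet eigenfunction reduces the variational problem to
\[
\min_{a\in(0,1)}\left[\frac{\pi^2}{4\gamma_1 a^2}+\frac{\pi^2}{4\gamma_2(1-a)^2}\right],
\]
whose unique critical point is $a^\ast=\gamma_2^{1/3}/(\gamma_1^{1/3}+\gamma_2^{1/3})$. Hence convergence holds along the full sequence, modulo the chosen orientation. The hardest step is the interface non-collapse: the bare energy identity leaves $\int|v_{i,n}'|^2$ unbounded a priori, and it is only by exploiting the monotonicity enforced by the first branch $\mathcal{C}_1$---which restricts the solution to a single interior interface whose location can be compared against explicit two-bump competitors---that one prevents concentration and thereby gains the Lipschitz bound.
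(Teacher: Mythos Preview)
Your plan has a genuine gap at the identification step. The $\Gamma$-convergence scheme of \cite{MR2928850} applies to \emph{minimizers} of a joint functional, but for general $g_i$ the system \eqref{mainsys} is not variational: the pair $(v_{1,n},v_{2,n})$ is only a Nash equilibrium, meaning each component minimizes a Rayleigh quotient with the other held fixed. This is strictly weaker than joint minimization, and the recovery-sequence/$\limsup$ half of your argument has no functional to hook into. Concretely, once Lipschitz bounds and $\lambda_{i,n}/\nu_n\to\ell_i>0$ are in hand, the limit equations force each $V_i$ to be a cosine on its support, say $V_1$ on $[0,\pi/(2\sqrt{\ell_1})]$ and $V_2$ on $[1-\pi/(2\sqrt{\ell_2}),1]$; but the Nash inequalities only recover $\ell_i=\int|V_i'|^2$, which is automatic from the ODE and says nothing about how $\ell_1$ and $\ell_2$ are coupled. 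What singles out the minimizer of \eqref{eq:Gamma_limit_pb} is the \emph{balance relation} $\ell_1^{3/2}/\gamma_1=\ell_2^{3/2}/\gamma_2$ together with the \emph{no-gap condition} $\pi/(2\sqrt{\ell_1})+\pi/(2\sqrt{\ell_2})=1$; neither follows from componentwise minimality.

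The paper obtains both by a different, essentially Hamiltonian, mechanism. Writing $g_i(s)=\gamma_i s+h_i(s)$ with $h_i'(0)=0$, it introduces the conserved ``joint energy''
\[
T(x)=\sum_{i}\frac{1}{\gamma_i}\Big[\nu (v_i')^2+(\lambda_i-h_i(v_j^2))v_i^2\Big]-v_1^2v_2^2+\{\text{lower order integrals}\},
\]
checks $T'\equiv0$, and evaluates $T$ at $x=0,1$ (where exponential decay of the ``wrong'' component kills all cross terms) to get $\frac{\lambda_1}{\gamma_1}v_1^2(0)=\frac{\lambda_2}{\gamma_2}v_2^2(1)+o(\nu)$; dividing by $\nu$ yields the balance relation. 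The no-gap condition is proved separately by a fine analysis of the interface point $x_m$ where $v_1=v_2=m$, showing via blow-up that $m^4/\nu$ has a positive finite limit and that the inflection points $\xi_1,\xi_2$ both converge to $x_0$. There is also a smaller gap earlier in your outline: the Bernstein identity gives $\nu\|v_i'\|_\infty^2\le\lambda_i\|v_i\|_\infty^2$, so the Lipschitz bound requires $\lambda_{i,n}/\nu_n$ bounded, which you have not established; the paper gets this (after the $L^\infty$ bound) by testing the Rayleigh quotient with a compactly supported competitor on a region where the other component is exponentially small.
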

\begin{remark}
We expect that most of the results of Theorems \ref{thm:intro_1D} and
\ref{thm:intro_S1} can be extended to higher dimension, in the radial
setting.
\end{remark}
It is easy to see that the convergence above is optimal: indeed, in case of
Lipschitz convergence, both $V_i$ would be $C^1$, a contradiction with
their explicit expression provided in Proposition
\ref{prop:limiting_problem}. Up to our knowledge, this is the first paper
obtaining optimal bounds for competitions which are not of power-type, even
though only in dimension $N=1$ (or in the radial case). The only other paper
dealing with generic competitions is \cite{MR2384550}, where uniform bounds
in the planar case $N=2$, not necessarily radial, are obtained.

Let us also point out that along the first branch the problem --which
is not variational-- inherits a variational principle in the limit.
This is a remarkable fact, since it shows a deep connection between the
variational problem \eqref{eq:Gamma_limit_pb} and the nonvariational system
\eqref{mainsys}.
This phenomenon was already observed, in a different situation, in
\cite{MR2283921}.

Of course, all the results we obtained for system \eqref{mainsys} can be
restated for the original MFG system \eqref{MFG}, recalling that
\[
m_i = v_i^2,\qquad u_i = - 2\nut \ln v_i.
\]

Finally, let us also mention that the true multidimensional case $N\geq2$,
as well as the case of $3$ or more populations, are of interest: they
will be the object of future studies.

The present paper is structured as follows: in Section \ref{sec:prelim} we
list a few preliminary results; Section \ref{sec:var} is devoted to the
analysis of the variational case, and to the proof of Theorem
\ref{thm:intro_var}, while Section \ref{sec:bifurc} contains the bifurcation
arguments and the proof of Theorem \ref{thm:intro_bif}; the Sturm-type
characterization of the nontrivial solutions in dimension $N=1$ is developed
in Section \ref{sec:1d}, and the proof of Theorem \ref{thm:intro_1D} is
completed in Section \ref{sec:1dsegr}, by showing segregation; finally, the
proof of Theorem \ref{thm:intro_S1} is contained in Section
\ref{sec:firstbranch}.

\textbf{Notation.}  Throughout the paper, $i$ denotes an index between $1$ and $2$, and $j=3-i$. With a little abuse of terminology,
we say that $(v_1,v_2)$ solves \eqref{mainsys} (or even that $(\nu, v_1, v_2)$ does) if there exist $\lambda_1,\lambda_2$ such that $(v_1,v_2,\lambda_1,\lambda_2)$ satisfies
\eqref{mainsys} (for some prescribed $\nu$).

We will denote by $(\mu_k)_{k \ge 0}$ the non decreasing sequence of the eigenvalues
of $-\Delta$ with homogeneous Neumann boundary conditions, namely $\mu_k$ is such that
\begin{equation}\label{eigen_sys}
\begin{cases}
- \Delta \psi_k = \mu_k \psi_k & \text{in $\Omega$}, \\
\partial_n \psi_k = 0 & \text{on $\partial \Omega$},
\end{cases}
\end{equation}
for some eigenvector $\psi_k \in C^{2, \alpha}(\overline{\Omega})$, which constitute
an orthonormal basis of $L^2(\Omega)$.
The first eigenvalue $\mu_0 = 0$ is simple and its corresponding
eigenfunction is $\psi \equiv 1$.

Given a function $u$, $u^\pm(x) = \max(\pm u(x),0)$ denote its positive and negative parts. Finally, $C, C_1, C_2,\dots$ denote (positive) constants we need not to specify.

\section{Preliminaries}\label{sec:prelim}

In this section we collect some preliminary results and some estimates of frequent use.
\begin{lemma}\label{lem:Nash_iff_sol}
The pair $(v_1,v_2)$ is a Nash equilibrium if and only if, up to a change of sign of each component, it is a (classical) solution of \eqref{mainsys}.
\end{lemma}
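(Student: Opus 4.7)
The plan is to identify the quantity $\lambda_i$ in Definition \ref{def:Nash} with the principal Neumann eigenvalue of the linear Schr\"odinger-type operator $L_j := -\nu\Delta + g_i(v_j^2)$ (with $v_j$ fixed), and then exploit the standard variational/positivity characterization of first eigenpairs in both directions.

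First I would address the implication ``Nash equilibrium $\Rightarrow$ solution''. Suppose $v_i$ achieves the infimum in Definition \ref{def:Nash}. Since $g_i(v_j^2) \in L^\infty(\Omega)$ (because $v_j \in H^1(\Omega) \subset L^p$ for some $p$, and in our setting we will work with $C^{2,\alpha}$ solutions anyway, but even merely $L^\infty$ is enough), the functional
\[
J_i(w) = \int_\Omega \bigl[\nu|\nabla w|^2 + g_i(v_j^2) w^2\bigr]\,dx
\]
is coercive on the unit sphere of $L^2(\Omega)$ and a standard Lagrange multiplier argument shows that the minimizer satisfies $-\nu\Delta v_i + g_i(v_j^2) v_i = \Lambda_i v_i$ in $\Omega$, $\partial_n v_i = 0$ on $\partial\Omega$, for some multiplier $\Lambda_i \in \R$. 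Testing with $v_i$ and using the constraint identifies $\Lambda_i$ with the infimum value $\lambda_i$. To match the sign requirement of \eqref{mainsys}, I would observe that $|v_i| \in H^1(\Omega)$ satisfies $\int |v_i|^2 = 1$ and $J_i(|v_i|) = J_i(v_i)$, so $|v_i|$ is itself a minimizer and hence a weak (thus, by Schauder theory, classical $C^{2,\alpha}$) solution of the same linear equation; the strong maximum principle applied to the operator $-\nu\Delta + g_i(v_j^2) - \lambda_i + K$ with $K$ large forces $|v_i| > 0$ in $\overline{\Omega}$, which in turn implies $v_i$ does not change sign. Up to the allowed change of sign, $v_i > 0$ and $(v_1, v_2)$ solves \eqref{mainsys}.

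For the converse ``solution $\Rightarrow$ Nash equilibrium'', suppose $(v_1, v_2)$ is a classical solution of \eqref{mainsys} with multipliers $(\lambda_1, \lambda_2)$. Then each $v_i$ is a strictly positive $C^{2,\alpha}$ Neumann eigenfunction of $L_j = -\nu\Delta + g_i(v_j^2)$ with eigenvalue $\lambda_i$. By the standard Krein--Rutman / principal eigenvalue theorem for selfadjoint elliptic operators with Neumann boundary conditions, the principal eigenvalue is simple and is the unique eigenvalue admitting a positive eigenfunction; hence $\lambda_i$ coincides with the first eigenvalue of $L_j$, which by the Rayleigh--Ritz formula equals precisely the infimum in Definition \ref{def:Nash}, and $v_i$ achieves it.

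The only real care needed is in the two positivity arguments (upgrading $|v_i|$ to a strictly positive minimizer in the forward direction, and recognizing $\lambda_i$ as the principal eigenvalue in the backward direction); both are entirely standard once one observes that $g_i(v_j^2)$ is a continuous nonnegative potential, so I do not expect any serious obstacle beyond correctly invoking the strong maximum principle and the variational characterization of the principal Neumann eigenvalue.
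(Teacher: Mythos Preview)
Your proposal is correct and follows essentially the same route as the paper: both identify $\lambda_i$ as the principal Neumann eigenvalue of $-\nu\Delta + g_i(v_j^2)$ and use the standard positivity characterization of the first eigenfunction (via the maximum principle) in both directions. Your write-up is simply more explicit than the paper's, which compresses the argument into a few lines.
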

\begin{proof}
Considering $v_j$ as fixed, we have that $v_i$ is an $L^2$-normalized
eigenfunction of the Neumann realization of the operator
\[
H^1(\Omega) \ni w \mapsto -\nu \Delta w + g_i(v_j^2) w,
\]
and that $\lambda_i$ is the corresponding eigenvalue. But then $v_i$ is strictly
positive (up to a change of sign) if and only if it is the first eigenfunction, i.e.
it achieves the infimum in Definition \ref{def:Nash}. In particular, the proof of the
strict positivity in $\overline{\Omega}$ is a routine application of the Maximum Principle
and Hopf's Lemma.
\end{proof}

\begin{lemma}
Let $(v_1,v_2)$ solve \eqref{mainsys}. Then either it is trivial, or
\[
\min_{\overline{\Omega}} g_i(v_j^2)<\lambda_i<\max_{\overline{\Omega}} g_i(v_j^2),\qquad i=1,2.
\]
\end{lemma}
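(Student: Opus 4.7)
The plan is to rewrite the $i$-th equation of \eqref{mainsys} in the form
\[
-\nu \Delta v_i + \bigl(g_i(v_j^2) - \lambda_i\bigr) v_i = 0
\]
and then argue by contradiction using the strong maximum principle together with Hopf's lemma, adapted to Neumann boundary conditions.

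First I would establish the preliminary observation that a nontrivial solution must have both components non-constant. Indeed, if $v_i$ were a constant, the normalization $\int_\Omega v_i^2 = 1$ together with $|\Omega|=1$ forces $v_i\equiv 1$; substituting into the equation gives $g_i(v_j^2) \equiv \lambda_i$, and the strict monotonicity of $g_i$ (assumption \eqref{Gass}) then forces $v_j$ constant as well, hence $v_j \equiv 1$. Thus a nontrivial solution has $v_1,v_2$ both non-constant.

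Next, for the upper bound, suppose for contradiction that $\lambda_i \ge \max_{\overline\Omega} g_i(v_j^2)$. Setting $c(x) := g_i(v_j^2)(x) - \lambda_i \le 0$ and recalling $v_i>0$, the equation becomes $-\nu \Delta v_i = -c(x)\, v_i \ge 0$, so $v_i$ is superharmonic. Its minimum over $\overline\Omega$ is attained somewhere: if at an interior point, the strong maximum principle forces $v_i$ constant; if on $\partial\Omega$, Hopf's lemma yields $\partial_n v_i < 0$ there unless $v_i$ is constant, contradicting the Neumann condition $\partial_n v_i = 0$. In either case $v_i$ is constant, contradicting the preliminary observation. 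Hence $\lambda_i < \max g_i(v_j^2)$.

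The lower bound is symmetric: assuming $\lambda_i \le \min g_i(v_j^2)$ makes $c(x) \ge 0$, so $-\nu \Delta v_i = -c(x) v_i \le 0$, i.e.\ $v_i$ is subharmonic. Applying the strong maximum principle at the maximum point (together with Hopf at the boundary, using $\partial_n v_i = 0$) again forces $v_i$ constant, the same contradiction. Therefore $\min g_i(v_j^2) < \lambda_i < \max g_i(v_j^2)$, as claimed. No step seems to be a serious obstacle; the only delicate point is ensuring Hopf's lemma applies at a boundary extremum so as to exploit the Neumann condition, which is standard since $\Omega$ is smooth and $v_i \in C^{2,\alpha}(\overline{\Omega})$.
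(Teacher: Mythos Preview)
Your argument is correct, but it differs from the paper's proof in its core mechanism. The paper simply integrates the $i$-th equation over $\Omega$: using the Neumann condition one gets
\[
\int_\Omega \bigl[\lambda_i - g_i(v_j^2)\bigr] v_i \, dx = 0,
\]
and since $v_i>0$ this forces $\lambda_i - g_i(v_j^2)$ either to vanish identically (whence $v_j$ is constant, and then so is $v_i$) or to change sign, yielding the strict inequalities directly. Your route instead assumes one inequality fails, deduces a sign on $\Delta v_i$, and invokes the strong maximum principle together with Hopf's lemma at a boundary extremum to force $v_i$ constant. Both reach the same preliminary observation that one constant component implies triviality. The paper's approach is shorter and entirely elementary (no maximum principle needed), while yours is a natural PDE reflex and has the mild advantage of being local in spirit; in this setting, however, the integration argument is the more economical choice.
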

\begin{proof}
Integrating the equation for $v_i$ we can write
\[
\int_\Omega\left[\lambda_i-g_i(v_j^2)\right]v_i\,dx = \nu\int_{\partial\Omega}
\partial_n v_i\,d\sigma = 0,
\]
and since $v_i$ is positive, we deduce that either $\lambda_i-g_i(v_j^2)\equiv 0$,
i.e. $v_j$ is constant, or $\min_{\overline{\Omega}} g_i(v_j^2)<\lambda_i<
\max_{\overline{\Omega}} g_i(v_j^2)$.

Now, if both $v_i$ and $v_j$ are not constant, then the second alternative follows.
Let $v_i$ be constant: then its equation implies $g_i(v_j^2)\equiv \lambda_i$, so
that also $v_j$ is constant. Finally, both such constants must be $1$ by the $L^2$-
constraint (recall that $|\Omega|=1$).
\end{proof}
\begin{remark}
The above lemma shows that, for Nash equilibria, having a constant component
implies being the trivial solution (in this sense, the terminology
``constant solution'' is not ambiguous). In fact, if unique continuation for
\eqref{mainsys} holds, then any solution such that one component is
constant in a (non empty) open $\Omega_0\subset\Omega$ must be the trivial one. This
is always true, in particular, in dimension $N=1$ (see Section \ref{sec:1d}).
\end{remark}

\begin{lem}\label{lem:prel_estimates}
Let $(v_1, v_2)$ solve \eqref{mainsys}. The following identities hold, for every $i$:
\[
\begin{split}
&\nu \int_\Omega |\nabla  v_i|^2 +  \int_\Omega g_i(v_j^2) v_i^2 = \lambda_i;\\
&\nu \int_\Omega  \left|\dfrac{\nabla v_i}{v_i} \right|^2 + \lambda_i  =  \int_\Omega g_i(v_j^2).
\end{split}
\]
In particular, the multipliers $\lambda_i$ satisfy
\begin{equation}\label{identity1}
C_g^{-1} \int_\Omega v_1^2 v_2^2 \leq \lambda_i \leq C_g.
\end{equation}
\end{lem}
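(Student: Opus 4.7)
The plan is to obtain both identities by standard testing arguments and then read off the two-sided bound on $\lambda_i$ from the structural hypothesis \eqref{Gass}. No real obstacle is expected; the only slightly delicate point is the manipulation of $\Delta v_i/v_i$, which is licit because, by Lemma \ref{lem:Nash_iff_sol}, $v_i\in C^{2,\alpha}(\overline\Omega)$ is strictly positive up to the boundary.

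For the first identity, I would multiply the equation $-\nu\Delta v_i + g_i(v_j^2)v_i = \lambda_i v_i$ by $v_i$ and integrate over $\Omega$. Green's formula gives
\[
-\nu\int_\Omega v_i\Delta v_i\,dx = \nu\int_\Omega |\nabla v_i|^2\,dx - \nu\int_{\partial\Omega} v_i\,\partial_n v_i\,d\sigma,
\]
and the Neumann condition $\partial_n v_i = 0$ together with the $L^2$-normalization $\int_\Omega v_i^2\,dx = 1$ immediately yields the first relation.

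For the second identity, I would divide the PDE by $v_i>0$ to get $-\nu\Delta v_i/v_i + g_i(v_j^2) = \lambda_i$, then integrate over $\Omega$ and use $|\Omega|=1$. The computation of $\int_\Omega \Delta v_i/v_i\,dx$ relies on the pointwise identity
\[
\Delta(\ln v_i) = \frac{\Delta v_i}{v_i} - \left|\frac{\nabla v_i}{v_i}\right|^2,
\]
combined with $\int_\Omega \Delta(\ln v_i)\,dx = \int_{\partial\Omega}\partial_n v_i/v_i\,d\sigma = 0$, which is again a direct consequence of the Neumann boundary condition. Rearranging gives the desired formula.

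Finally, for the bound \eqref{identity1}: the first identity implies $\lambda_i \geq \int_\Omega g_i(v_j^2) v_i^2\,dx$, and the lower bound $g_i(s)\geq C_g^{-1}s$ from \eqref{Gass} (applied with $s = v_j^2$) gives $\lambda_i \geq C_g^{-1}\int_\Omega v_1^2 v_2^2\,dx$. The second identity implies $\lambda_i \leq \int_\Omega g_i(v_j^2)\,dx$, and the upper bound $g_i(s)\leq C_g s$ together with $\int_\Omega v_j^2\,dx = 1$ gives $\lambda_i \leq C_g$. This completes the plan.
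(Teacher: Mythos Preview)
Your proposal is correct and follows the same approach as the paper: multiply the equation by $v_i$ and by $1/v_i$ respectively, integrate by parts using the Neumann condition, and then read off \eqref{identity1} from the growth bounds in \eqref{Gass} and the $L^2$-normalization. Your use of the identity $\Delta(\ln v_i)=\Delta v_i/v_i - |\nabla v_i/v_i|^2$ is just an equivalent way of carrying out the integration by parts of $\int_\Omega \Delta v_i/v_i$.
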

\begin{proof} To obtain the two identities it suffices to use integration by parts after multiplying the equation for $v_i$ by $v_i$ and $1/v_i$, respectively.
Since $\int_\Omega v_i^2 = 1$ and $C_g^{-1} s \leq g_i(s) \leq C_g s$, \eqref{identity1} follows.
\end{proof}
\begin{corollary}\label{coro:suff_cond_segr}
A sufficient condition for $\{(\nu_n,v_{1,n},v_{2,n})\}_n$ to segregate is that, for the corresponding
multipliers,
\[
\text{either }\lambda_{1,n} \rightarrow 0, \quad \text{or } \lambda_{2,n} \rightarrow 0,
\]
as $n \rightarrow \infty$.
\end{corollary}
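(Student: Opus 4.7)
The plan is to chain together Lemma \ref{lem:prel_estimates} and a single application of the Cauchy--Schwarz inequality, exploiting the normalization $|\Omega|=1$. The key observation is that the lemma provides a two-sided bound for the multipliers, and in particular the \emph{lower} estimate
\[
C_g^{-1}\int_\Omega v_1^2 v_2^2\,dx \le \lambda_i, \qquad i=1,2,
\]
so the vanishing of either $\lambda_{1,n}$ or $\lambda_{2,n}$ forces $\int_\Omega v_{1,n}^2 v_{2,n}^2\,dx\to 0$ as $n\to\infty$.

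To pass from this $L^2$-type control on the product $v_{1,n}v_{2,n}$ to the $L^1$-type convergence appearing in Definition \ref{def:segr}, I would apply Cauchy--Schwarz to the pair $(v_{1,n}v_{2,n},\,1)$:
\[
\int_\Omega v_{1,n} v_{2,n}\,dx \le \left(\int_\Omega v_{1,n}^2 v_{2,n}^2\,dx\right)^{1/2} |\Omega|^{1/2} = \left(\int_\Omega v_{1,n}^2 v_{2,n}^2\,dx\right)^{1/2},
\]
where the last equality is precisely the normalization $|\Omega|=1$. Combining the two displays yields $\int_\Omega v_{1,n} v_{2,n}\,dx\to 0$, which is exactly the segregation property for the given sequence.

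There is essentially no obstacle: the statement is a one-line consequence of the preceding lemma, and the only care needed is to note that Definition \ref{def:segr} requires $L^1$ rather than $L^2$ smallness of the product, which is supplied for free by Cauchy--Schwarz on the unit-measure domain. I would not worry about the hypothesis $\nu_n\to 0$ explicitly, since the corollary simply identifies a sufficient condition at the level of the multipliers under which any such sequence (in particular one with $\nu_n\to 0$) exhibits segregation.
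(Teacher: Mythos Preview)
Your argument is correct and is exactly the intended one: the paper does not even write out a proof for this corollary, since it is an immediate consequence of the lower bound in \eqref{identity1} together with Cauchy--Schwarz on the unit-measure domain, precisely as you describe.
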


\section{The variational case}\label{sec:var}

This section is devoted to the proof of Theorem \ref{thm:intro_var}. Such proof relies on ideas contained in \cite{MR2928850},
even though in that paper a different problem is considered (Dirichlet conditions, symmetric interaction, auto-catalytic reaction terms).
For this reason we describe the main ideas here, and refer the reader to \cite{MR2928850} for more details.

In the following we assume that $N\leq 3$ and
\[
g_i(s)=\gamma_i s, \qquad\gamma_i>0.
\]
As we already noticed, the corresponding system has a gradient structure. For easier notation we
make a change of variable, setting
\begin{equation}\label{eq:var_change_of_var}
\beta=\dfrac{1}{\nu},\quad
\tilde v_1 = \sqrt{\gamma_2} v_1,\quad
\tilde v_2 = \sqrt{\gamma_1} v_2.
\end{equation}
With this notation system \eqref{mainsys} becomes
\begin{equation}\label{eq:changofvar_varsys}
\begin{cases}
- \Delta \tilde v_1 + \beta\tilde v_2^2\tilde v_1 = \lambda_1 \tilde v_1\\
- \Delta \tilde v_2 + \beta\tilde v_1^2\tilde v_2 = \lambda_2 \tilde v_2 & \text{in $\Omega$}\\
\int_\Omega\tilde v_1^2 = \gamma_2, \  \int_\Omega\tilde v_2^2 = \gamma_1,\quad \tilde v_1,\tilde v_2>0\\
\partial_n\tilde v_1 = \partial_n\tilde v_2 = 0 & \text{on $\partial \Omega$}
\end{cases}
\end{equation}
(of course, the multipliers $\lambda_i$ here are suitable multiple of those of the original system).
Also for \eqref{eq:changofvar_varsys} positive solutions are Nash equilibria, among which the trivial one is the pair $(\sqrt{\gamma_2} ,\sqrt{\gamma_1})$.
Solutions to \eqref{eq:changofvar_varsys} are critical points of the functional
\[
J_\beta(\tilde v_1, \tilde v_2) = \int_\Omega\left[|\nabla \tilde v_1 |^2 +
|\nabla \tilde v_2 |^2 + \beta\tilde v_1^2\tilde v_2^2\right]
\]
constrained to the manifold
\[
\tilde M = \left\{ (\tilde v_1,\tilde v_2)\in H^1(\Omega)\times H^1(\Omega) : \int_{\Omega} \tilde v_1^2 = \gamma_2,\ \int_{\Omega} \tilde v_2^2 = \gamma_1 \right\}
\]
(recall that, since $N\leq 3$, the exponent $p=4$ is Sobolev subcritical and thus $J_\beta$ is of class $C^1$).
\begin{lemma}\label{lem:var_achieved}
For every $\beta>0$ the value
\[
 c_\beta:= \inf_{\tilde M} J_\beta\quad\text{ is achieved by } \quad
 (\tilde v_{1,\beta} , \tilde v_{2,\beta})\in \tilde M,
\]
which is a Nash equilibrium for \eqref{eq:changofvar_varsys}. Furthermore, if
\[
\beta\geq\frac{\gamma_1+\gamma_2}{\gamma_1\gamma_2}\mu_1
\]
(the first positive Neumann eigenvalue of $-\Delta$ in $\Omega$) then $(\tilde v_{1,\beta} , \tilde v_{2,\beta})$ is nontrivial.
\end{lemma}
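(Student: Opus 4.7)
The plan splits into existence of a minimizer (and its identification as a Nash equilibrium) and a separate comparison argument for nontriviality when $\beta$ is large.

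For existence I would apply the direct method. Since the coupling term in $J_\beta$ is nonnegative, $J_\beta \geq 0$ on $\tilde M$ and $c_\beta \in [0,+\infty)$. A minimizing sequence $\{(\tilde v_{1,n}, \tilde v_{2,n})\} \subset \tilde M$ is bounded in $H^1(\Omega)\times H^1(\Omega)$, because the $L^2$ norms are fixed by the constraints and the Dirichlet parts are dominated by $J_\beta$. Using $N\leq 3$, the embedding $H^1 \hookrightarrow L^4$ is compact, so up to subsequences I extract weak $H^1$ and strong $L^4$ limits $\tilde v_i$. Strong $L^2$ convergence preserves both mass constraints, weak lower semicontinuity handles the Dirichlet part, and strong $L^4$ convergence handles the quartic coupling, giving $J_\beta(\tilde v_1, \tilde v_2) \leq c_\beta$. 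Replacing $\tilde v_i$ by $|\tilde v_i|$ leaves $J_\beta$ and $\tilde M$ invariant, so the minimizer may be taken nonnegative. The Lagrange multiplier rule produces a classical (by elliptic regularity) solution of \eqref{eq:changofvar_varsys}; the strong maximum principle and Hopf's lemma upgrade nonnegativity to strict positivity on $\overline\Omega$, and Lemma \ref{lem:Nash_iff_sol} supplies the Nash property.

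For nontriviality I would compare $c_\beta$ to the value $\beta\gamma_1\gamma_2$ of $J_\beta$ at the trivial configuration $(\sqrt{\gamma_2},\sqrt{\gamma_1})\in\tilde M$ (using $|\Omega|=1$) and exhibit a competitor with strictly lower energy. Let $\psi_1$ be a first-positive-eigenvalue Neumann eigenfunction of $-\Delta$, normalized by $\int \psi_1 = 0$ and $\int \psi_1^2 = 1$, and define
\[
\tilde v_{1,\eps} = \sqrt{\gamma_2}\,\frac{1+\eps\psi_1}{\sqrt{1+\eps^2}}, \qquad \tilde v_{2,\eps} = \sqrt{\gamma_1}\,\frac{1-\eps\psi_1}{\sqrt{1+\eps^2}}.
\]
For $|\eps|$ small both components are positive and the pair lies in $\tilde M$ by the choice of denominators. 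A direct Taylor expansion (using $\int\psi_1=0$, $\int\psi_1^2=1$, $\int|\nabla\psi_1|^2=\mu_1$) gives
\[
J_\beta(\tilde v_{1,\eps}, \tilde v_{2,\eps}) - \beta\gamma_1\gamma_2 = \big[\mu_1(\gamma_1+\gamma_2) - 4\beta\gamma_1\gamma_2\big]\eps^2 + O(\eps^4).
\]
Under the hypothesis $\beta\geq\mu_1(\gamma_1+\gamma_2)/(\gamma_1\gamma_2)$ the bracket is $\leq -3\mu_1(\gamma_1+\gamma_2)<0$, so $J_\beta(\tilde v_{1,\eps},\tilde v_{2,\eps})<\beta\gamma_1\gamma_2$ for small $\eps>0$, hence the minimizer cannot be the trivial pair.

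The existence step is routine; the delicate step is the choice of competitor. The two components must be perturbed in \emph{opposite} directions along the same eigenfunction $\psi_1$, so that $\int \tilde v_1^2 \tilde v_2^2$ picks up the $-4\beta\gamma_1\gamma_2\eps^2$ deficit that defeats the positive Dirichlet cost $(\gamma_1+\gamma_2)\mu_1\eps^2$. Perturbing only one component, or perturbing both in the same direction, leaves the coupling term unchanged at leading order and cannot lower the energy. Checking that the $O(\eps^4)$ remainder does not spoil the sign is the only bookkeeping hurdle in the expansion.
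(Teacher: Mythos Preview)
Your proposal is correct. The existence and Nash parts essentially match the paper (the paper shortcuts the Nash step by noting that a global minimizer of $J_\beta|_{\tilde M}$ is automatically a minimizer in each variable separately, bypassing Lagrange multipliers and the maximum principle, but your route works just as well).

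The nontriviality argument, however, is genuinely different. The paper uses a \emph{segregated} competitor: taking the positive and negative parts $\psi_1^\pm$ of the first nonconstant eigenfunction and normalizing so that $(a_+\psi_1^+,a_-\psi_1^-)\in\tilde M$, the coupling term vanishes identically and one computes exactly $J_\beta(a_+\psi_1^+,a_-\psi_1^-)=(\gamma_1+\gamma_2)\mu_1$ for \emph{every} $\beta$, whence $c_\beta<(\gamma_1+\gamma_2)\mu_1$ (strict because this pair does not solve \eqref{eq:changofvar_varsys}). Your approach is instead a second-variation argument: perturbing the trivial pair by $\pm\eps\psi_1$ shows that $(\sqrt{\gamma_2},\sqrt{\gamma_1})$ is not even a local minimizer once $\beta>\mu_1(\gamma_1+\gamma_2)/(4\gamma_1\gamma_2)$. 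Your computation therefore yields a threshold sharper by a factor $4$ and connects naturally with the bifurcation picture of Section~\ref{sec:bifurc}; the paper's competitor, on the other hand, is exact rather than perturbative and produces a $\beta$-independent upper bound for $c_\beta$.
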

\begin{proof}
Since $J_\beta$ is weakly l.s.c. in $H^1$, and $\tilde M$ is weakly closed,
the minima $(\tilde v_{1,\beta} , \tilde v_{2,\beta})$ exist by the direct
method. Moreover, since
\[
\int_\Omega\left[|\nabla \tilde v_i |^2 + \beta\tilde v_j^2\tilde v_i^2\right]
=
J_\beta(\tilde v_1, \tilde v_2) - \int_\Omega|\nabla \tilde v_j |^2,
\]
we have that such minima correspond to Nash equilibria for the original problem (the converse, of course, is false). We
are left to prove that, for $\beta$ large, $(\tilde v_{1,\beta} , \tilde v_{2,\beta})\neq(\sqrt{\gamma_2} ,\sqrt{\gamma_1})$.
To do that, we will choose a suitable competitor in the definition of $c_\beta$: let $\psi_1$ be an eigenfunction associated to $\mu_1$.
Then $\psi_1$ changes sign (indeed it is orthogonal to the eigenfunction $\psi_0=1$, associated to $\mu_0=0$) and we can find non-zero constants $a_\pm$ such that $(a_+\psi^+,a_-\psi^-)\in\tilde M$. Then
\[
c_\beta < J_\beta (a_+\psi^+,a_-\psi^-) = (\gamma_1+\gamma_2)\mu_1
\]
(equality can not hold since $(a_+\psi^+,a_-\psi^-)$ can not solve \eqref{eq:changofvar_varsys}) while
\[
J_\beta(\sqrt{\gamma_2} ,\sqrt{\gamma_1}) = \gamma_1 \gamma_2\beta.
\qedhere
\]
\end{proof}
Once we have solved the problem for $\beta>0$ fixed, we are ready to show $\Gamma$-convergence as $\beta\to+\infty$. Let
\[
J_\infty(\tilde v_1, \tilde v_2)  :=
\begin{cases}
\dis\int_\Omega\left[|\nabla \tilde v_1 |^2 + |\nabla \tilde v_2 |^2\right] &
\text{when }\dis\int_{\Omega} \tilde v_1^2\tilde v_2^2 = 0\smallskip\\
+\infty & \text{otherwise}
\end{cases}
\qquad\text{ and }\qquad
c_\infty := \inf_{\tilde M} J_\infty.
\]
\begin{lemma}\label{lem:var_conv}
As $\beta \to + \infty$,
\[
c_\beta \to c_\infty
\qquad\text{and (up to subs.) }\qquad
\tilde v_{i,\beta} \to \tilde V_{i} \text{ in }H^1(\Omega) \cap
C^{0,\alpha}(\overline{\Omega}),
\]
where $(\tilde V_{1} , \tilde V_{2})\in \tilde M$ achieves $c_\infty$.
\end{lemma}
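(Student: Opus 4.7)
The proof is a $\Gamma$-convergence-type argument, which I carry out in three steps.

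\textbf{Upper bound and compactness.} The segregated competitor $(a_+\psi_1^+, a_-\psi_1^-)\in\tilde M$ introduced in the proof of Lemma \ref{lem:var_achieved} has disjoint supports, so it belongs to $\{\tilde v_1 \tilde v_2 \equiv 0\}$ and gives $c_\infty \leq (\gamma_1+\gamma_2)\mu_1 < +\infty$. Since $J_\beta$ coincides with $J_\infty$ on the segregated class, minimising over $\tilde M$ yields the uniform bound $c_\beta \leq c_\infty$, whence
\[
\int_\Omega |\nabla \tilde v_{i,\beta}|^2 \leq c_\infty, \qquad \int_\Omega \tilde v_{1,\beta}^2 \tilde v_{2,\beta}^2 \leq c_\infty / \beta.
\]
The first estimate (together with the fixed $L^2$ norms) bounds $(\tilde v_{i,\beta})$ uniformly in $H^1(\Omega)$, and the second shows that the interaction integral vanishes as $\beta\to\infty$.

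\textbf{Limit identification and strong $H^1$ convergence.} Up to subsequences, $\tilde v_{i,\beta}\rightharpoonup\tilde V_i$ weakly in $H^1(\Omega)$ and, by Rellich--Kondrachov (applicable since $4<2^*$ when $N\leq 3$), strongly in $L^4(\Omega)$. The strong $L^4$ convergence lets me pass to the limit in the interaction integral and conclude $\tilde V_1 \tilde V_2\equiv 0$ a.e.; the strong $L^2$ convergence preserves the mass constraints, so $(\tilde V_1,\tilde V_2)\in\tilde M$ is admissible for $J_\infty$. Weak lower semicontinuity of the Dirichlet energy then gives
\[
c_\infty \leq J_\infty(\tilde V_1,\tilde V_2) = \sum_{i=1,2} \int_\Omega |\nabla \tilde V_i|^2 \leq \liminf_{\beta\to\infty} \sum_{i=1,2} \int_\Omega |\nabla \tilde v_{i,\beta}|^2 \leq \liminf_{\beta\to\infty} c_\beta,
\]
which combined with the upper bound forces $c_\beta\to c_\infty$ and shows that $(\tilde V_1,\tilde V_2)$ attains $c_\infty$. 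Moreover, since the sum of the liminfs equals the limit of the sums, up to a further subsequence each $\|\nabla \tilde v_{i,\beta}\|_{L^2}$ converges to $\|\nabla \tilde V_i\|_{L^2}$; combined with weak $H^1$ convergence, this upgrades to strong convergence in $H^1(\Omega)$.

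\textbf{The main obstacle: uniform Hölder bounds.} The delicate point is the strong $C^{0,\alpha}(\overline\Omega)$ convergence. A direct argument from the equation $-\Delta \tilde v_{i,\beta} = (\lambda_{i,\beta} - \beta \tilde v_{j,\beta}^2)\tilde v_{i,\beta}$ fails because the zero-order coefficient $\beta \tilde v_{j,\beta}^2$ is unbounded in $\beta$. I would proceed in two substeps: first, since each $\tilde v_{i,\beta}$ is a non-negative weak subsolution of $-\Delta w = \lambda_{i,\beta} w$ with multipliers bounded uniformly by Lemma \ref{lem:prel_estimates}, Moser iteration yields a $\beta$-independent bound $\|\tilde v_{i,\beta}\|_{L^\infty}\leq C$; second, starting from this $L^\infty$ bound, I would invoke the uniform Hölder estimate for minimisers of strongly competing elliptic systems developed in \cite{MR2928850}, obtaining $\alpha\in(0,1)$ and $C>0$ independent of $\beta$ with $\|\tilde v_{i,\beta}\|_{C^{0,\alpha}(\overline\Omega)}\leq C$. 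Arzelà--Ascoli then upgrades the strong $L^2$ convergence of the previous step to strong convergence in $C^{0,\alpha'}(\overline\Omega)$ for every $\alpha'<\alpha$, as required.
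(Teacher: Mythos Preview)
Your argument follows the paper's approach essentially verbatim: monotonicity of $c_\beta$ in $\beta$, $H^1$/$L^4$ compactness, limit identification via weak lower semicontinuity yielding strong $H^1$ convergence, and then uniform H\"older bounds via the strongly-competing-systems theory. For the last step, note that the paper invokes \cite{MR2599456} (not \cite{MR2928850}) for the uniform $C^{0,\alpha}$ estimate, and---since that reference is written for Dirichlet data---points out that its blow-up proofs (specifically Lemmas~3.4--3.6 there) adapt to the Neumann case by replacing odd half-space extensions with even ones; you should flag this boundary adaptation explicitly.
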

\begin{proof}
First of all, we notice that, for every $(\tilde v_1,\tilde v_2)$ fixed,
\[
\beta_1 \leq \beta_2 \leq +\infty
\qquad\implies\qquad
J_{\beta_1}(\tilde v_1,\tilde v_2)\leq J_{\beta_2}(\tilde v_1,\tilde v_2).
\]
We deduce that $c_\beta$ is increasing in $\beta$ and bounded by $c_\infty$, thus it converges.
If the pair $(\tilde v_{1,\beta}, \tilde v_{2,\beta})$ achieves $c_\beta$, $\beta<+\infty$,
then $c_\beta\leq c_\infty$ implies
\[
\text{both }\| (\tilde v_{1,\beta}, \tilde v_{2,\beta}) \|_{H^1}^2 \leq c_\infty + \gamma_1+\gamma_2, \qquad
\text{and }\int_\Omega \tilde v_{1,\beta}^2 \tilde v_{2,\beta}^2 \leq \frac{c_\infty}{\beta}.
\]
We infer the existence of $(V_1,V_2)$ such that, up to subsequences, $\tilde v_{i,\beta} \to \tilde V_i$,
weakly in $H^1$ and strongly in $L^p$, $p=2,4$. In particular $(\tilde V_1,\tilde V_2)\in M$ and
$\tilde V_1\cdot \tilde V_2\equiv0$.
We have
\[
c_\infty \geq \lim c_\beta = \lim J_\beta (\tilde v_{1,\beta}, \tilde v_{2,\beta}) \geq
\liminf \int_{\Omega} |\nabla \tilde v_{1,\beta}|^2 + |\nabla \tilde v_{2,\beta}|^2
\geq
\int_{\Omega} |\tilde V_1|^2 + |\tilde V_2|^2 \geq c_\infty.
\]
Thus $(\tilde V_1, \tilde V_2)$ achieves $c_\infty$, and the inequalities above are indeed equalities, proving
convergence in $H^1$ norm and hence strong $H^1$ convergence.

The last thing to prove is the boundedness in $C^{0,\alpha}$
(which will imply convergence in $C^{0,\alpha}$ too, by Ascoli's Theorem). Notice that $(\tilde v_{1,\beta},\tilde v_{2,\beta})$ satisfies \eqref{eq:changofvar_varsys},
and that $0\leq \lambda_i \leq c_\infty/\gamma_i$. As a consequence, boundedness of the H\"older seminorm can be obtained as in \cite[Theorem 1.1]{MR2599456}, which provides
the same result in the case of Dirichlet boundary conditions: since the proofs in \cite{MR2599456} use blow-up arguments, in order to cover the Neumann case one just has to replace
odd extensions (from the half-space to $\R^N$) with even ones. More precisely, this replacement has to be performed in \cite[Lemmas 3.4, 3.5, 3.6]{MR2599456}.
\end{proof}
\begin{proof}[End of the proof of Theorem \ref{thm:intro_var}]
The proof of such theorem easily descends from Lemmas \ref{lem:var_achieved},
\ref{lem:var_conv}, when
going back to the original unknowns \eqref{eq:var_change_of_var}. In particular, notice that
$(\tilde V_1,\tilde V_2)\in\tilde M$ achieves $c_\infty$ if and only if $(\tilde V_1/
\sqrt{\gamma_2},\tilde V_2/\sqrt{\gamma_1})\in M$ achieves \eqref{eq:Gamma_limit_pb}.
\end{proof}
%
%
%
To conclude this section, we collect some properties of the minimizers associated to $c_\infty$.
\begin{proposition}\label{prop:limiting_problem}
Let $(\tilde V_1,\tilde V_2)\in\tilde M$ achieve $c_\infty$. Then $\tilde V_{1} \cdot \tilde V_{2} \equiv 0$ and there exist parameters $\Lambda_{i}$ such that
\[
-\Delta (\tilde V_1 - \tilde V_2 ) =
\Lambda_{1}\tilde V_1 - \Lambda_{2} \tilde V_2
\]
(in particular, $\tilde V_1 - \tilde V_2\in C^{2,\alpha}(\overline{\Omega})$).

Furthermore, in dimension $N=1$, let
$\Omega=(0,1)$. Then, the unique minimizer is (up to the reflection $x 	\leftrightarrow 1-x$)\begin{align*}
\tilde V_1(x) & = \sqrt{\frac{2\gamma_2}{x_0}}   \cos\left(\frac{\pi}{2x_0}x\right) \cdot \chi_{\left[0, x_0\right]}(x), \\
\tilde V_2(x) & = \sqrt{\frac{2\gamma_1}{1-x_0}} \cos\left(\frac{\pi}{2(1-x_0)}(1-x)\right) \cdot \chi_{\left[x_0,1\right]}(x),
\end{align*}
and $x_0 = \dfrac{\sqrt[3]{\gamma_2}}{
\sqrt[3]{\gamma_1}+\sqrt[3]{\gamma_2}}$.
\end{proposition}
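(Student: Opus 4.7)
The plan is to reduce the minimization to a scalar problem for $W:=\tilde V_1-\tilde V_2$, derive its Euler--Lagrange equation via Lagrange multipliers, obtain $C^{2,\alpha}$ regularity by a Schauder bootstrap, and then in dimension one fully resolve the support structure through a short geometric count.

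\textbf{General statement.} The orthogonality $\tilde V_1\cdot\tilde V_2\equiv 0$ is encoded in $J_\infty(\tilde V_1,\tilde V_2)=c_\infty<\infty$. Set $W:=\tilde V_1-\tilde V_2\in H^1(\Omega)$; since $\nabla u=0$ almost everywhere on $\{u=0\}$ for any $u\in H^1$, the disjoint-support condition yields $W^+=\tilde V_1$, $W^-=\tilde V_2$ and $|\nabla W|^2=|\nabla\tilde V_1|^2+|\nabla\tilde V_2|^2$ a.e. Thus the problem is equivalent to minimizing $\int_\Omega|\nabla W|^2$ over $W\in H^1(\Omega)$ with $\int(W^+)^2=\gamma_2$, $\int(W^-)^2=\gamma_1$. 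Both constraints are $C^1$ on $H^1$, with Fréchet derivatives $\varphi\mapsto 2\int W^+\varphi$ and $\varphi\mapsto -2\int W^-\varphi$; they are linearly independent on $H^1$, since $W^\pm$ have disjoint nontrivial supports. The Lagrange multiplier rule then produces $\Lambda_1,\Lambda_2\in\R$ with
\[
-\Delta W = \Lambda_1 W^+ - \Lambda_2 W^- = \Lambda_1\tilde V_1 - \Lambda_2\tilde V_2
\]
weakly in $\Omega$ and $\partial_n W=0$ on $\partial\Omega$. Since $\tilde V_i\in L^\infty$ by Lemma \ref{lem:var_conv}, the right-hand side is bounded, so $W\in W^{2,p}$ for every $p$; a standard Schauder bootstrap (the right-hand side is $C^{0,\alpha}$ once $W$ is) upgrades this to $W\in C^{2,\alpha}(\overline\Omega)$.

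\textbf{One-dimensional analysis.} On $\Omega=(0,1)$, testing the Euler--Lagrange equation against $W$ yields the identity
\[
c_\infty = \Lambda_1\gamma_2 + \Lambda_2\gamma_1.
\]
On each connected component $I$ of $\{W>0\}$, $W$ is a positive first eigenfunction of $-W''=\Lambda_1 W$ with Dirichlet conditions on $\partial I\cap(0,1)$ and Neumann at $0$ or $1$. Hence $I$ has length $\pi/\sqrt{\Lambda_1}$ if interior, or $\pi/(2\sqrt{\Lambda_1})$ if it touches $\{0,1\}$. If $\{W>0\}$ has $p\in\{0,1,2\}$ boundary components and $q\ge 0$ interior ones, with total length $L_1$, then $\Lambda_1=\pi^2(p+2q)^2/(4L_1^2)$, strictly minimized by $(p,q)=(1,0)$. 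The same count applies to $\{W<0\}$, whose unique boundary component must sit at the opposite endpoint. A gap between the two supports is inadmissible: extending one of them into the gap (redefining $\tilde V_i$ as the first eigenfunction on the enlarged interval with the same $L^2$ mass) strictly decreases the corresponding $\Lambda_i$ without affecting the other, contradicting minimality. Thus, up to the reflection $x\leftrightarrow 1-x$, $\supp\tilde V_1=[0,x_0]$ and $\supp\tilde V_2=[x_0,1]$ for some $x_0\in(0,1)$.

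\textbf{Explicit profiles and optimization.} Solving the eigenvalue problem on each half with the prescribed $L^2$ masses gives the claimed cosine profiles, and the energy collapses to
\[
c_\infty(x_0) = \frac{\pi^2}{4}\left(\frac{\gamma_2}{x_0^2}+\frac{\gamma_1}{(1-x_0)^2}\right).
\]
Setting the derivative to zero gives $\gamma_1/(1-x_0)^3=\gamma_2/x_0^3$, whence $x_0=\sqrt[3]{\gamma_2}/(\sqrt[3]{\gamma_1}+\sqrt[3]{\gamma_2})$.

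The most delicate step is the structural lemma in the 1D analysis (a single boundary-touching interval for each support, with no gap). It works cleanly because, once the Euler--Lagrange equation is in hand, all components of $\{W>0\}$ share the same $\Lambda_1$, forcing their lengths to be quantized; paired with the identity $c_\infty=\Lambda_1\gamma_2+\Lambda_2\gamma_1$, the optimization decouples into an elementary interval-length problem. The remaining ingredients (Lagrange rule, elliptic bootstrap, one-variable calculus) are routine.
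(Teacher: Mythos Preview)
Your argument is correct and the general-dimension part matches the paper exactly: reduce to $W=\tilde V_1-\tilde V_2$, apply the Lagrange multiplier rule on the scalar problem, and bootstrap. (One small remark: citing Lemma~\ref{lem:var_conv} for the $L^\infty$ bound is slightly circular, since that lemma concerns limits of the $\beta$-minimizers rather than an arbitrary $c_\infty$-minimizer; but you do not actually need it, as $H^1\hookrightarrow L^6$ in $N\le 3$ already launches the Schauder bootstrap.)

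For the one-dimensional part your route differs from the paper's. The paper argues by restriction: it takes a single component $I_{i,1}$ of each support, renormalizes the restrictions to have the correct masses, observes that the resulting pair still achieves $c_\infty$, and then uses that the new $W$ is a $C^{2,\alpha}$ solution of the Euler--Lagrange ODE that vanishes identically on an open subinterval, so ODE uniqueness forces $W\equiv 0$ unless each $\mathcal N_i$ was already a singleton. You instead use an energy/length count: the identity $c_\infty=\Lambda_1\gamma_2+\Lambda_2\gamma_1$ together with the quantization $\Lambda_i=\pi^2(p_i+2q_i)^2/(4L_i^2)$ shows that, for fixed total lengths $L_1,L_2$, the configuration $(p_i,q_i)=(1,0)$ placed at opposite endpoints is a strictly better competitor, and then a monotonicity-in-length argument closes any gap. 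Your approach is arguably more elementary (no unique continuation step) and makes the decoupling into a one-parameter optimization in $x_0$ transparent; the paper's restriction trick is shorter to state but relies on the $C^{2,\alpha}$ regularity already established. Both collapse to the same explicit minimization, and your computation of $x_0$ is correct.
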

\begin{proof}
Let
\[
J^*(w) = \int_{\Omega} |\nabla w|^2, \qquad
M^* = \left\{ w\in H^1(\Omega) : \int_{\Omega}
(w^+)^2 = \gamma_2,\ \int_{\Omega}
(w^-)^2 = \gamma_1 \right\}.
\]
Then, for component-wise positive pairs, $J_\infty(\tilde v_1,\tilde v_2)|_{\tilde M}
= J^*(\tilde v_1-\tilde v_2)|_{M^*}$, and the
first part of the proposition follows by the Lagrange multipliers rule (and by standard elliptic regularity).

Turning to the monodimensional case, we have that $(\tilde V_1,\tilde V_2)\in H^1(0,1)\times H^1(0,1)$ satisfies
\begin{equation}\label{eq:contoesplicitoindim1}
- (\tilde V_1 - \tilde V_2 )'' =
\Lambda_{1}\tilde V_1 - \Lambda_{2} \tilde V_2,\qquad \tilde V_1 \cdot \tilde V_2\equiv 0, \qquad \text{in }(0,1)
\end{equation}
with Neumann boundary conditions. By elementary considerations we deduce the existence of (at most countable) disjoint open intervals $I_{i,n}$,
with $i=1,2$ and $n\in \mathcal{N}_i\subset \N$, such that
\[
V_i(x) = \sum_{n\in \mathcal{N}_i} a_{i,n} \cos\left(\sqrt{\Lambda_i}( x - x_{i,n} )\right) \cdot \chi_{I_{i,n}}(x),
\]
where
\[
I_{i,n} =
\begin{cases}
\left(0,\frac{\pi}{2\sqrt{\Lambda_i}}\right) & \text{if } x_{i,n} =0\\
\left(x_{i,n} - \frac{\pi}{2\sqrt{\Lambda_i}},x_{i,n} - \frac{\pi}{2\sqrt{\Lambda_i}}\right) & \text{if } x_{i,n} \in \left(\frac{\pi}{2\sqrt{\Lambda_i}},1 - \frac{\pi}{2\sqrt{\Lambda_i}}\right)\\
\left(1 - \frac{\pi}{2\sqrt{\Lambda_i}},1\right) & \text{if } x_{i,n} =1
\end{cases}
,\qquad  \sum_n \frac{\pi}{2}|I_{i,n}|a_{i,n}^2 = \gamma_j.
\]
Now, also the pair defined by
\[
\tilde W_i = \frac{2\gamma_j}{\pi|I_{i,1}|a_{i,1}^2} \tilde V_i |_{I_{i,1}}
\]
achieves $c_\infty$; as a consequence, $\tilde W_1 - \tilde W_2$ solves \eqref{eq:contoesplicitoindim1}, while $\tilde W_1 - \tilde W_2\equiv0$
outside $I_{1,1}\cup I_{2,1}$. We deduce that both $\mathcal{N}_i$ are singleton, and finally that
\[
c_\infty = \min\left\{\int_0^1 (w_1')^2 + (w_2')^2 :
\begin{array}{l}
 w_1(x) =\sqrt{\frac{2\gamma_2}{x_1}}   \cos\left(\frac{\pi}{2x_1}x\right) \cdot \chi_{\left[0, x_1\right]}(x)\smallskip\\
 w_2(x) =\sqrt{\frac{2\gamma_1}{1-x_2}} \cos\left(\frac{\pi}{2(1-x_2)}(1-x)\right) \cdot \chi_{\left[x_2,1\right]}(x)\smallskip\\
 0<x_1\leq x_2<1
\end{array}
\right\},
\]
whose unique solution can be computed by elementary tools.
\end{proof}

\section{Bifurcation results}\label{sec:bifurc}

In this section we apply tools from global bifurcation theory in order to prove Theorem \ref{thm:intro_bif}. The main
references are the celebrated papers by Rabinowitz \cite{MR0301587} and Crandall and Rabinowitz \cite{MR0288640}, which
deal respectively with global bifurcation results for odd eigenvalues, and local ones for simple eigenvalues; for some
details about the asymptotic expansions in the latter case, we refer also to \cite[Chap. 5]{MR1225101}.
For the reader's convenience, we recall here the two statements we will apply.
\begin{theorem}[{\cite[Thm. 1.3]{MR0301587}}]\label{thm:Rabinowitz_odd}
Let $E$ be a Banach space, and let $G\from \R\times E \to E$, continuous and compact, be such that
\[
G(\beta,v) = \beta Lv + H(\beta,v),
\]
with $L$ linear and compact and $H(\beta,v)=o(\|v\|)$ as $v\to0$, uniformly on bounded $\beta$ intervals.

If $\beta^*$ is a characteristic value (i.e. $1/\beta^*$ is an eigenvalue) of $L$, having odd multiplicity, then
\[
\mathcal{S} := \overline{\left\{(\beta,v): v = G(\beta,v),\ v\neq0 \right\}}
\]
possesses a maximal subcontinuum $\mathcal{C}$ such that $(\beta^*,0)\in\mathcal{C}$, and $\mathcal{C}$ either
is unbounded in $E$, or $(\beta^{**}, 0)\in\mathcal{C}$, where $\beta^{**}\neq\beta^*$ is another characteristic value of $L$.
\end{theorem}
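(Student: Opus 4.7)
The plan is to prove this via Leray--Schauder degree theory, which is the standard approach. Set $F(\beta,v) := v - G(\beta,v)$, so that $\mathcal{S} = \overline{\{F=0\}\setminus(\R\times\{0\})}$. Because $G$ is compact, $F$ is a compact perturbation of the identity, and the Leray--Schauder degree $\deg(F(\beta,\cdot), B_r, 0)$ is defined whenever $F(\beta,\cdot)\neq 0$ on $\partial B_r$. Let $\mathcal{C}$ be the connected component of $\overline{\mathcal{S}}\cup\{(\beta^*,0)\}$ in $\R\times E$ that contains $(\beta^*,0)$.

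The first key step is the index computation at the trivial solution. For $\beta$ not a characteristic value of $L$, the linear map $I - \beta L$ is invertible, $v=0$ is isolated, and
\[
 i(F(\beta,\cdot),0) = (-1)^{m(\beta)},
\]
where $m(\beta)$ is the sum of algebraic multiplicities of the characteristic values of $L$ in $(0,\beta)$. Since $\beta^*$ has \emph{odd} multiplicity, the index $i(F(\beta,\cdot),0)$ changes sign as $\beta$ crosses $\beta^*$. This is the quantitative fact that forces a nontrivial branch to emerge.

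The second key step is the topological/separation argument leading to the Rabinowitz alternative. Arguing by contradiction, suppose $\mathcal{C}$ is bounded and contains no other point $(\beta^{**},0)$ with $\beta^{**}$ a characteristic value of $L$. Then $\mathcal{C}$ is a compact subset of $\R\times E$ meeting the $(\beta,0)$--axis only at $(\beta^*,0)$. Using a Whyburn-type lemma (if two disjoint compact sets of a compact metric space are not in the same connected component, they can be separated by disjoint open neighborhoods whose boundaries miss $\mathcal{S}$), one constructs a bounded open set $\mathcal{O}\subset\R\times E$ with $\mathcal{C}\subset\mathcal{O}$, $F\neq 0$ on $\partial\mathcal{O}$, and $\overline{\mathcal{O}}\cap\{(\beta,0):\beta\text{ characteristic}\} = \{(\beta^*,0)\}$. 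Intersect with vertical slices to obtain $\mathcal{O}_\beta\subset E$.

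The contradiction now comes from degree theory. By homotopy invariance, $\beta\mapsto \deg(F(\beta,\cdot), \mathcal{O}_\beta, 0)$ is locally constant wherever $F(\beta,\cdot)\neq 0$ on $\partial\mathcal{O}_\beta$, hence constant on intervals not containing $\beta^*$. For $\beta$ outside a large interval $[\beta_{\min},\beta_{\max}]$ containing $\mathcal{C}$, the slice $\mathcal{O}_\beta$ is empty so the degree is $0$. On the other hand, for $\beta$ slightly less than $\beta^*$ (respectively greater than), the only zero of $F(\beta,\cdot)$ inside $\mathcal{O}_\beta$ is $v=0$ (by the separation property), so the degree equals $i(F(\beta,\cdot),0)$, which jumps by a nonzero amount across $\beta^*$ by the first step. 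The two pictures are inconsistent, yielding the dichotomy.

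The main obstacle, conceptually, is the separation/Whyburn argument: the bounded, isolated--from--other--bifurcation--points component $\mathcal{C}$ must be cleanly enclosed in a bounded open set whose boundary avoids $\mathcal{S}$, and this requires compactness of $\overline{\mathcal{C}}$ plus the compactness properties of $G$ to guarantee that $\mathcal{S}$ is locally compact. Once this purely topological step is done, the degree computation and the parity argument produce the contradiction essentially for free.
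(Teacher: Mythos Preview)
The paper does not prove this theorem: it is quoted verbatim from Rabinowitz \cite{MR0301587} as a tool to be applied, with no argument supplied. Your sketch is the standard Leray--Schauder degree proof from Rabinowitz's original paper (index jump at an odd-multiplicity characteristic value, Whyburn separation of a putative bounded component, contradiction via homotopy invariance of the degree), and at the level of an outline it is correct; there is simply nothing in the present paper to compare it against.
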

\begin{theorem}[{\cite[Thms. 1.37, 1.18]{MR0288640}}]\label{thm:Rabinowitz_simple}
Under the assumptions of Theorem \ref{thm:Rabinowitz_odd}, assume furthermore that $G$ is of
class $C^2$ and that $\partial^2_{\beta,v} G(\beta,0)=L$.

If $\beta^*$ is a simple characteristic value of $L$ and $v^*\neq0$ is such that
\[
\Ker(I-\beta^* L) = \spann\{v^*\},\qquad v^* \not\in \Range(I-\beta^* L),
\]
then $\mathcal{S}$ is a continuous curve, locally near $(\beta^*,0)$, parameterized as
\[
\eps\mapsto (\beta,v) = (\beta^* + \varphi(\eps), \eps v^* + \eps\psi(\eps)),
\]
where $\varphi(0)=0$, $\psi(0)=0$. If $G$ is more regular, then also the above curve is, and one can write higher order expansions (see Remark \ref{rem:Taylor}).
\end{theorem}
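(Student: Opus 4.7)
The plan is to carry out a Lyapunov--Schmidt reduction around $(\beta^*,0)$. Since $L$ is compact, $I-\beta^* L$ is Fredholm of index zero, and the simplicity hypothesis together with $v^*\notin\Range(I-\beta^* L)$ yields the continuous direct sum decompositions
\[
E \;=\; \spann\{v^*\}\oplus W \;=\; \spann\{v^*\}\oplus R,
\]
where $W$ is a closed topological complement of $\Ker(I-\beta^* L)$ and $R := \Range(I-\beta^* L)$. Let $P:E\to R$ and $Q = I-P : E \to \spann\{v^*\}$ denote the continuous projections associated with the second splitting, and write each $v$ as $v=\eps v^* + w$ with $w\in W$. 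Setting $F(\beta,v):=v-\beta L v-H(\beta,v)$, the equation $F(\beta,v)=0$ splits into the range equation $PF(\beta,\eps v^* + w)=0$ and the (one-dimensional) kernel equation $QF(\beta,\eps v^* + w)=0$.

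The derivative of the range equation in $w$ evaluated at $(\beta^*,0,0)$ equals $(I-\beta^* L)|_W : W\to R$, which is a Banach isomorphism by construction. Using the $C^2$ regularity of $G$, the implicit function theorem produces a locally unique $C^2$ solution $w=w(\eps,\beta)\in W$ of the range equation in a neighborhood of $(0,\beta^*)$. The identity $F(\beta,0)\equiv 0$ forces $w(0,\beta)\equiv 0$ by uniqueness, and differentiating the range equation in $\eps$ at $(0,\beta^*)$ gives $\partial_\eps w(0,\beta^*)=0$.

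Substitution in the kernel equation produces the scalar reduced equation $f(\eps,\beta):=QF(\beta,\eps v^*+w(\eps,\beta))=0$. Since $f(0,\beta)\equiv 0$, Hadamard's lemma yields a smooth factorization $f(\eps,\beta)=\eps\,\tilde f(\eps,\beta)$, and using $\partial_v H(\beta,0)=0$ together with $\partial_\eps w(0,\beta^*)=0$ a direct computation gives $\tilde f(0,\beta^*)=0$ and $\partial_\beta \tilde f(0,\beta^*)=-QLv^*$. The hypothesis $v^*\notin R$ is equivalent to $QLv^*\neq 0$ (since $Lv^*=v^*/\beta^*$), so a second application of the implicit function theorem delivers the bifurcation curve $\beta=\beta^*+\varphi(\eps)$ with $\varphi(0)=0$. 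Setting $\psi(\eps) := \eps^{-1}w(\eps,\beta^*+\varphi(\eps))$, the conditions $w(0,\beta)\equiv 0$ and $\partial_\eps w(0,\beta^*)=0$ imply that $\psi$ extends continuously with $\psi(0)=0$, yielding the announced parametrization $v = \eps v^* + \eps\psi(\eps)$.

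The main subtlety, I expect, is the correct setup of the Fredholm decomposition and the verification that the two projections $P,Q$ are continuous; only then does the implicit function theorem apply in the full Banach space. Everything else is essentially bookkeeping: higher regularity of $G$ simply propagates through the two implicit function theorem applications, and the Taylor expansions of $\varphi$ and $\psi$ alluded to in Remark \ref{rem:Taylor} are obtained by differentiating the range and bifurcation equations successively at $\eps=0$ and reading off the coefficients order by order.
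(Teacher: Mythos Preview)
The paper does not prove this theorem: it is quoted verbatim from Crandall--Rabinowitz \cite{MR0288640} for the reader's convenience, and is used as a black box in the proof of Theorem~\ref{thm:intro_bif}. Your Lyapunov--Schmidt reduction is precisely the argument of the original reference, and the sketch is correct; in particular, the Fredholm splitting, the solution of the range equation via the implicit function theorem, the factorization $f=\eps\tilde f$, and the computation $\partial_\beta\tilde f(0,\beta^*)=-QLv^*=-v^*/\beta^*\neq0$ are all set up properly.
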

Among different possible choices, we will apply the above results in the ambient space
\[
E:=\left\{v=(v_1,v_2)\in C^{2,\alpha}(\overline{\Omega},\R^2) :\partial_{n}v_i = 0\text{ on }
\partial\Omega\right\}.
\]
\begin{lemma}\label{lem:smooth_G}
The map $G\from \R \times E \to E$ defined as
\[
u= G(\beta,v)
\qquad\iff\qquad
\begin{cases}
- \Delta u_1 + \beta g_1(v_2^2) u_1 = \lambda_1 u_1 & \\
- \Delta u_2 + \beta g_2(v_1^2) u_2 = \lambda_2 u_2 & \text{in $\Omega$} \\
\int_\Omega u_1^2\,dx = \int_\Omega u_2^2\,dx = 1,\quad u_1,u_2>0\\
\partial_n u_1 = \partial_n u_2 = 0 & \text{on $\partial \Omega$},
\end{cases}
\]
for suitable $\lambda_i$, is (well-defined and) of class $C^2$. Moreover it holds
\[
\partial_{v} G(\beta,1,1) = \beta L,
\]
where
\begin{equation}\label{eq:linearized_for_Rabinowitz}
z = L w
\qquad\iff\qquad
\begin{cases}
- \Delta z_1 = -2 g_1'(1) \left[w_2 -\int_\Omega w_2\right] & \\
- \Delta z_2 = -2 g_2'(1) \left[w_1 -\int_\Omega w_1\right]  & \text{in $\Omega$} \\
\int_\Omega z_1\,dx = \int_\Omega z_2\,dx = 0\\
\partial_n z_1 = \partial_n z_2 = 0 & \text{on $\partial \Omega$}.
\end{cases}
\end{equation}
\end{lemma}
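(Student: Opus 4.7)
The plan is to prove the three assertions in turn: $G$ is well-defined on $\mathbb R\times E$; $G$ is of class $C^2$; and $\partial_v G(\beta,1,1)=\beta L$. For well-definedness, fix $(\beta,v)\in\mathbb R\times E$: the potential $V_i:=\beta g_i(v_j^2)$ belongs to $C^{2,\alpha}(\overline\Omega)$ since $g_i\in C^2$, and the Neumann realization of $-\Delta+V_i$ is self-adjoint with compact resolvent. By the Krein--Rutman theorem (combined with the strong maximum principle and Hopf's lemma) its smallest eigenvalue $\lambda_i$ is simple with strictly positive eigenfunction, and the constraint $\int_\Omega u_i^2=1$ pins down a unique $u_i\in C^{2,\alpha}(\overline\Omega)$ with $\partial_n u_i=0$ and $u_i>0$, so $G(\beta,v)\in E$.

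For $C^2$ regularity, I would exploit that $G_i$ depends only on $(\beta,v_j)$ and apply the implicit function theorem to
\[
F_i(\beta,v_j,u,\lambda):=\bigl(-\Delta u+\beta g_i(v_j^2)u-\lambda u,\ \textstyle\int_\Omega u^2-1\bigr),
\]
regarded as a $C^2$ map $\mathbb R\times X\times X\times\mathbb R\to C^{0,\alpha}(\overline\Omega)\times\mathbb R$ with $X:=\{u\in C^{2,\alpha}(\overline\Omega):\partial_n u=0\}$. At a solution $(\beta_0,v_{j,0},u_{i,0},\lambda_{i,0})$ with $u_{i,0}>0$, the partial differential in $(u,\lambda)$ maps $(\phi,\mu)$ to $(A\phi-\mu u_{i,0},\ 2\int_\Omega u_{i,0}\phi)$, where $A:=-\Delta+\beta_0 g_i(v_{j,0}^2)-\lambda_{i,0}$ is self-adjoint and Fredholm of index zero, with one-dimensional kernel $\spann\{u_{i,0}\}$ by the simplicity of $\lambda_{i,0}$. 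The Fredholm solvability condition uniquely fixes $\mu=-\int_\Omega h u_{i,0}$ for any $h\in C^{0,\alpha}(\overline\Omega)$, and then the normalization $2\int_\Omega u_{i,0}\phi=\alpha$ selects $\phi$ uniquely in the affine family of preimages; hence $D_{(u,\lambda)}F_i$ is an isomorphism. The IFT then yields local $C^2$ dependence of $(u_i,\lambda_i)$ on $(\beta,v_j)$, and continuity of the branch together with the strong maximum principle forces the selected $u_i$ to coincide with the principal eigenfunction, hence with $G_i$, in a neighborhood.

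Finally, at the trivial solution $v\equiv(1,1)$ one has $u_i\equiv 1$ and $\lambda_i=\beta g_i(1)$. Writing $v(s)=(1+sw_1,1+sw_2)$, expanding $u_i(s)=1+sz_i+o(s)$ and $\lambda_i(s)=\beta g_i(1)+s\mu_i+o(s)$, and using $g_i((1+sw_j)^2)=g_i(1)+2sg_i'(1)w_j+O(s^2)$, the $O(s)$ terms of the eigenvalue equation read
\[
-\Delta z_i+2\beta g_i'(1)w_j=\mu_i.
\]
Integrating over $\Omega$ with the Neumann boundary condition and $|\Omega|=1$ gives $\mu_i=2\beta g_i'(1)\int_\Omega w_j$, while differentiating $\int_\Omega u_i^2=1$ at $s=0$ yields $\int_\Omega z_i=0$. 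Substituting the formula for $\mu_i$ reproduces \eqref{eq:linearized_for_Rabinowitz} multiplied by $\beta$, so $\partial_v G(\beta,1,1)=\beta L$. The only delicate point in the whole argument is the invertibility of $D_{(u,\lambda)}F_i$, which rests entirely on the simplicity of the principal eigenvalue and the self-adjoint Fredholm alternative; everything else is routine elliptic regularity and Taylor expansion.
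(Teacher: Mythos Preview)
Your proof is correct and follows essentially the same approach as the paper: define an auxiliary map $F$ encoding the eigenvalue problem together with the normalization, apply the Implicit Function Theorem using simplicity of the principal eigenvalue and the Fredholm alternative to obtain invertibility of $D_{(u,\lambda)}F$, and then compute the linearization at $(1,1)$ by direct expansion. Your version is simply more explicit---splitting into components $F_i$, spelling out the invertibility argument, and carrying out the Taylor expansion in detail---whereas the paper condenses these into a few lines.
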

\begin{proof}
The proof is based on standard smooth dependence of simple eigenvalues (and corresponding
eigenfunctions) with respect to the potentials, see for instance the book
\cite{MR2251558}. Let us consider
the map $F\from \R\times E\times E \times \R^2\to C^{0,\alpha}(\overline{\Omega},\R^2) \times
\R^2$,
\begin{equation}\label{eq:Fdefn}
F(\beta,v,u,\lambda) :=\left(
\begin{array}{c}
- \Delta u_1 + \beta g_1(v_2^2) u_1 - \lambda_1 u_1  \\
- \Delta u_2 + \beta g_2(v_1^2) u_2 - \lambda_2 u_2  \\
\int_\Omega u_1^2\,dx - 1\\
\int_\Omega u_2^2\,dx - 1
\end{array}
\right).
\end{equation}
Let $\beta,v$ be fixed. Then we can uniquely find positive eigenfunctions $u_i=u_i(\beta,v_j)$
and simple eigenvalues $\lambda_i=\lambda_i(\beta,v_j)$, such that $F=0$.
As a consequence, it is possible to apply the Implicit Function Theorem
in order to show that
\[
F(\beta,v,u,\lambda)=0
\qquad\iff\qquad
(u,\lambda)=\tilde G(\beta,v),
\]
with $\tilde G\in C^2$ (recall that each $g_i$ is of class $C^2$). More precisely, the
invertibility of $\partial_{(u,\lambda)} F$ at any of the points above mentioned can be obtained
by its injectivity (by Fredholm's Alternative).

Since $G$ is the projection of $\tilde G$ on $E$, the first part of the lemma follows. Observing
that
\[
\tilde G(\beta,1,1) = (1,1,\beta g(1), \beta g(1)),
\]
also the second part can be proved, by direct calculations.
\end{proof}
In order to apply the abstract results, we need to find the eigenvalues of the operator $L$
defined in the previous lemma. In the following, for easier notation, we write
\begin{equation}\label{abc}
\alpha_i = g'_i(1)>0 \qquad \text{(by assumption (\ref{Gass}))}.
\end{equation}
\begin{lemma}\label{lem:linear_G}
Let $L$ be defined as in \eqref{eq:linearized_for_Rabinowitz}. Then
\[
\beta^* L v^* = v^*,\quad v^*\neq0,
\qquad\iff\qquad
\beta^*=\frac{\mu^*}{2\sqrt{\alpha_1\alpha_2}},\quad v^* =
\left(-\sqrt{\alpha_1}\psi^*, \sqrt{\alpha_2}\psi^*\right),
\]
where $\mu^*$ is a positive Neumann eigenvalue of $-\Delta$ in $\Omega$ and $\psi^*$ a
corresponding eigenfunction.
\end{lemma}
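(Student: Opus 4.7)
The strategy is direct: solve the eigenvalue problem $\beta^* L v^* = v^*$ by expanding the components of $v^*$ in the orthonormal Neumann eigenbasis $\{\psi_k\}_{k\geq 0}$ of $-\Delta$.

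First I would observe that any eigenvector $v^* = \beta^* L v^*$ automatically lies in the range of $L$, so from the definition of $L$ in \eqref{eq:linearized_for_Rabinowitz} we get $\int_\Omega v_i^* = 0$ for $i=1,2$. As a consequence the terms $\int_\Omega w_j$ on the right-hand side vanish when $w = v^*$, and the eigenvalue equation reduces to the coupled system
\begin{equation}
-\Delta v_1^* = -2\beta^* \alpha_1 v_2^*, \qquad -\Delta v_2^* = -2\beta^* \alpha_2 v_1^*, \qquad \partial_n v_i^* = 0 \text{ on } \partial\Omega,
\end{equation}
with the extra constraint $\int_\Omega v_i^* = 0$.

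Next I would expand $v_i^* = \sum_{k\geq 0} c_{i,k}\psi_k$ in $L^2(\Omega)$. The zero-mean constraint forces $c_{i,0}=0$. Applying $-\Delta$ term-by-term and equating Fourier coefficients, for each $k\geq 1$ I obtain the $2\times 2$ linear system
\begin{equation}
\mu_k c_{1,k} = -2\beta^*\alpha_1 c_{2,k}, \qquad \mu_k c_{2,k} = -2\beta^*\alpha_2 c_{1,k}.
\end{equation}
Eliminating $c_{2,k}$ gives $\mu_k^2 c_{1,k} = 4(\beta^*)^2 \alpha_1\alpha_2 c_{1,k}$. Since $\mu_k>0$ for $k\geq 1$, a nontrivial pair $(c_{1,k},c_{2,k})$ exists iff $\mu_k = 2\beta^*\sqrt{\alpha_1\alpha_2}$, i.e. $\beta^* = \mu_k/(2\sqrt{\alpha_1\alpha_2})$. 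For such a $k$ the relation between coefficients reads $c_{2,k} = -\sqrt{\alpha_2/\alpha_1}\, c_{1,k}$; all other Fourier modes must vanish unless they correspond to the same eigenvalue $\mu^* = \mu_k$.

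Thus every characteristic value of $L$ has the form $\beta^* = \mu^*/(2\sqrt{\alpha_1\alpha_2})$ with $\mu^*$ a positive Neumann eigenvalue, and the corresponding eigenspace is spanned by vectors $v^* = (-\sqrt{\alpha_1}\,\psi^*, \sqrt{\alpha_2}\,\psi^*)$, where $\psi^*$ ranges over the $\mu^*$-eigenspace of $-\Delta$ (the normalization $c_{1,k} = -\sqrt{\alpha_1}$ being chosen to match the statement). I do not expect any serious obstacle here: the only mild point worth stressing is that the Fourier expansion is justified in $L^2$, and then elliptic regularity upgrades $v_i^*$ to $C^{2,\alpha}(\overline\Omega)$ with Neumann data, so the computation takes place inside the ambient space $E$.
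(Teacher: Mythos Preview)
Your argument is correct. The paper takes a slightly different route: instead of expanding in the Neumann eigenbasis and solving the $2\times 2$ system mode-by-mode, it introduces the linear combinations $\psi_\pm = \sqrt{\alpha_2}\,v_1^* \pm \sqrt{\alpha_1}\,v_2^*$, which decouple the system once and for all into
\[
-\Delta \psi_\pm = \mp\, 2\beta^*\sqrt{\alpha_1\alpha_2}\,\psi_\pm,\qquad \textstyle\int_\Omega \psi_\pm = 0,\qquad \partial_n\psi_\pm=0.
\]
For $\beta^*>0$ the maximum principle (no negative Neumann eigenvalues) forces $\psi_+\equiv 0$, while $\psi_-$ must be a Neumann eigenfunction with eigenvalue $\mu^*=2\beta^*\sqrt{\alpha_1\alpha_2}$. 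This is of course the same linear algebra as yours---diagonalizing the coupling matrix $\left(\begin{smallmatrix}0&-2\alpha_1\\-2\alpha_2&0\end{smallmatrix}\right)$---but done globally rather than Fourier-coefficient by Fourier-coefficient. Your approach makes the eigenspace structure more explicit; the paper's is a touch more direct. One small remark: when you pass from $\mu_k^2 = 4(\beta^*)^2\alpha_1\alpha_2$ to $\mu_k = 2\beta^*\sqrt{\alpha_1\alpha_2}$ you are tacitly assuming $\beta^*>0$; this is the only case relevant to the bifurcation analysis (since $\beta=1/\nu>0$), and the paper's proof makes the same implicit restriction when it invokes the maximum principle to kill $\psi_+$.
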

\begin{proof}
Recall that $\beta^* L v^* = v^*$ if and only
if, for both $i$,
\[
\begin{cases}
-\Delta v^*_i = - 2\beta^* \alpha_i v^*_j & \text{in }\Omega\\
\int_\Omega v_i^* = 0\\
\partial_n v_i^*=0 & \text{on }\partial\Omega.
\end{cases}
\]
Setting
\[
\psi_\pm = \sqrt{\alpha_2} v_1 \pm \sqrt{\alpha_1}v_2,
\]
we obtain that the above system is equivalent to
\[
\begin{cases}
-\Delta \psi_\pm = \mp 2\beta^* \saOaT \psi_\pm & \text{in }\Omega\\
\int_\Omega \psi_\pm  = 0\\
\partial_n \psi_\pm =0 & \text{on }\partial\Omega.
\end{cases}
\]
Hence, if $\beta^*\neq0$, we infer that $\beta^*$ is a characteristic value of $L$ if
and only if $\psi_+ = 0$ (by the Maximum Principle) and $2\beta^* \saOaT$ is an eigenvalue
of $- \Delta$ with zero Neumann boundary conditions. Moreover, the characteristic vector
space associated to $\beta^*$ is generated by
\begin{equation}\label{eigenv}
\left(-\sqrt{\alpha_1}\psi^*, \sqrt{\alpha_2}\psi^*\right).
\end{equation}
Finally, note that $\beta^* = 0$ is not a characteristic value of $L$,
as $- \Delta \psi = 0$ and $\int_\Omega \psi = 0$ imply that $\psi \equiv 0$.
\end{proof}
The last ingredient we need is some control on the behavior of nontrivial solutions.
\begin{lemma} \label{lem:global_bound}
There exists a constant $C>0$ such that
\begin{enumerate}
 \item $\mathcal{S}\subset \left\{(\beta,v) : \int_\Omega|\nabla v|^2 \leq C \beta\right\}$;
 \item $\mathcal{S}\subset \left\{(\beta,v) : \beta \geq C \right\}$.
\end{enumerate}
\end{lemma}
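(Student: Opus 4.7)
My plan for part (1) would be to apply Lemma \ref{lem:prel_estimates} directly: its first identity combined with $\lambda_i \leq C_g$ gives $\int_\Omega |\nabla v_i|^2 \leq \lambda_i/\nu \leq C_g \beta$, and summing over $i = 1, 2$ yields the claim with $C = 2 C_g$.

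For part (2) I would argue by contradiction, assuming a sequence $(\beta_n, v_n) \in \mathcal{S}$ with $\beta_n \to 0$. Since $\mathcal{S}$ is closed and the solution set of \eqref{mainsys} is closed in $E$, a density/diagonal argument allows me to work directly with nontrivial solutions. From part (1), $\|\nabla v_{i,n}\|_{L^2}^2 \leq C_g \beta_n \to 0$, and the normalization $\int v_{i,n}^2 = 1$ together with $v_{i,n} > 0$ forces $v_{i,n} \to 1$ strongly in $H^1(\Omega)$, hence in every $L^p$ via Sobolev embedding (at least in dimension $N \leq 3$).

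The contradiction will come from a linearization around the trivial solution. Setting $\epsilon_n := \|v_n - (1,1)\|_{H^1}$ and $\phi_{i,n} := (v_{i,n} - 1)/\epsilon_n$, the pair $\phi_n$ has unit $H^1$-norm. The $L^2$-constraint yields $\int \phi_{i,n} = O(\epsilon_n) \to 0$; integrating the equation against $1$ together with a Taylor expansion $g_i(v_{j,n}^2) = g_i(1) + 2 g_i'(1) \epsilon_n \phi_{j,n} + O(\epsilon_n^2)$ gives $\lambda_{i,n} - g_i(1) = O(\epsilon_n^2)$, so $\theta_{i,n} := (\lambda_{i,n} - g_i(1))/\epsilon_n \to 0$. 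The rescaled equation (divided by $\epsilon_n$ and multiplied by $\beta_n$) then reads
\[
-\Delta \phi_{i,n} = \beta_n\bigl(\theta_{i,n} - 2 g_i'(1) \phi_{j,n} + o(1)\bigr)(1 + \epsilon_n \phi_{i,n})
\]
with right-hand side uniformly bounded in some $L^p$. Testing against $\phi_{i,n}$ (the Neumann boundary conditions killing the boundary term) yields $\|\nabla \phi_{i,n}\|_{L^2}^2 = O(\beta_n) \to 0$, and Poincar\'e's inequality applied to $\phi_{i,n} - \int \phi_{i,n}$ forces $\phi_n \to 0$ strongly in $H^1$, contradicting $\|\phi_n\|_{H^1} = 1$.

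The main technical obstacle I expect lies in the careful bookkeeping of the Taylor remainders and of the $L^p$-bounds on the right-hand side of the rescaled equation: in dimensions $N \leq 3$ these follow at once from Sobolev embeddings $H^1 \hookrightarrow L^p$, but in higher dimensions one would need a preliminary $L^\infty$-bound on $v_{i,n}$ via elliptic bootstrap before the linearization can be rigorously justified.
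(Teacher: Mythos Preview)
Your proof of part (1) is correct and identical to the paper's.

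For part (2), your argument is correct (modulo the dimension caveat you already flag), but the paper takes a shorter, more conceptual route. After obtaining $v_n\to(1,1)$ in $H^1$ and then in $E$ by elliptic bootstrap, the paper simply observes that this would make $(\beta,v)=(0,(1,1))$ a bifurcation point for $v=G(\beta,v)$; but the linearization there is $I-\beta L\big|_{\beta=0}=I$, which is invertible, so no bifurcation can occur---contradiction. In other words, the paper invokes the implicit function theorem (already implicit in the bifurcation framework of Lemma~\ref{lem:smooth_G}) to rule out nontrivial solutions near $\beta=0$.

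Your hands-on linearization is essentially a direct proof of this implicit-function-theorem step: you renormalize $v_n-(1,1)$, show that the linearized equation has right-hand side of order $\beta_n$, and conclude that the renormalized increment must vanish. This is perfectly valid and more self-contained, but it duplicates machinery the paper already has available. The paper's approach is cleaner and dimension-independent (the $H^1\to C^{2,\alpha}$ bootstrap works here because the right-hand side of $-\Delta v_i = \beta_n(\lambda_i - g_i(v_j^2))v_i$ carries the small factor $\beta_n$), whereas your version requires tracking Taylor remainders and Sobolev embeddings, which is why you run into the $N\le 3$ restriction.
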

\begin{proof}
Recalling Lemma \ref{lem:prel_estimates} we have that, in the present setting,
\[
\int_\Omega |\nabla  v_i|^2 \leq \lambda_i \leq  \beta\int_\Omega g_i(v_j^2) \leq C_g\beta,
\]
and the first inclusion follows. Concerning the second one, let by contradiction
$(\beta_n,v_n)_n\subset \mathcal{S}$ be such that $\beta_n\to0$. Then, by the first inclusion,
$v_n\to(1,1)$ in $H^1$ and, by elliptic regularity, also in $E$. We deduce that $\beta^*=0$
corresponds to a bifurcation point, and therefore $\partial_{v} (\cdot-G(\beta,\cdot)) =
I - \beta L$ can not be invertible at $\beta=0$, a contradiction.
\end{proof}
We are ready to prove our main bifurcation results.
\begin{proof}[Proof of Theorem \ref{thm:intro_bif}]
First of all, let $\mu^*$ be a positive Neumann eigenvalue, with odd multiplicity, and
\[
\beta^*=\frac{\mu^*}{2\sqrt{\alpha_1\alpha_2}}
\]
By Lemma \ref{lem:linear_G} we are in a position to apply Theorem
\ref{thm:Rabinowitz_odd}, obtaining a nontrivial branch which satisfies one of
the alternatives there. Recalling that $\beta=1/\nu$, we readily have the existence
of a nontrivial branch $\mathcal{C}$ in the $(\nu,v)$-space, satisfying all the
conditions in \eqref{mainsys}, with the possible exception of the positivity ones.
In view of Lemma \ref{lem:global_bound} we have that
\[
\mathcal{C}\subset \left\{(\nu,v) : \int_\Omega|\nabla v|^2 \leq \frac{C_1}{\nu},\
0<\nu \leq C_2 \right\}.
\]
Note that, in principle, $\mathcal{C}\cap\{\nu\geq \eps>0\}$ may be unbounded
in $C^{2,\alpha}$. Recalling that, in dimension $N\leq 3$, the nolinearities
in \eqref{mainsys} are Sobolev subcritical, by standard elliptic regularity
we have that $H^1$ bounds imply $C^{2,\alpha}$ ones, so that unboundedness
can happen only as $\nu\to0$.
The last thing that is left to prove, to complete the first part of the theorem, is that the branch we
obtained consists of componentwise positive pairs. This easily follows since, by the Maximum
Principle, if the pairs $(v_{1,n},v_{2,n})$ solve \eqref{mainsys}, with $\nu=\nu_n>0$ and
$\lambda_i=\lambda_{i,n}$, and
\[
v_{i,n}\to\bar v_i,\qquad \nu_{i,n}\to\bar \nu_i,\qquad \lambda_{i,n}\to\bar \lambda_i,
\]
then either $\bar\nu=0$ or $\bar\nu>0$ and $\bar v_1,\bar v_2$ are strictly positive in
$\overline{\Omega}$.

Coming to the second part, let $\mu^*$ be a simple positive Neumann eigenvalue. By Lemma \ref{lem:linear_G}
we have that $\partial^2_{\beta,v} G(\beta,0)=L$. In order to apply Theorem
\ref{thm:Rabinowitz_simple}, we only have to check the compatibility condition, which in
our case writes
\[
\left(-\sqrt{\alpha_1}\psi^*, \sqrt{\alpha_2}\psi^*\right) \not\in \Range(I-\beta^* L)
\]
(here $\psi^*$ is an eigenfunction associated to $\mu^*=2\sqrt{\alpha_1\alpha_2}\beta^*$).
By contradiction, let us assume that $(-\sqrt{\alpha_1}\psi^*, \sqrt{\alpha_2}\psi^*) =
(I-\beta^* L)w$, i.e.
\[
\begin{cases}
-\Delta w_i = - 2\beta^* \alpha_i w_j + (-1)^i  \mu^* \psi^* \sqrt{\alpha_i} & \text{in }\Omega\\
\int_\Omega w_i  = 0\\
\partial_n w_i =0 & \text{on }\partial\Omega.
\end{cases}
\]
Reasoning as in the proof of Lemma \ref{lem:linear_G}, it is easy to prove that $w=0$, and
hence $\psi^*=0$, a contradiction.
\end{proof}

\begin{rem}\label{rem:Taylor} If we suppose that $g_1$ and $g_2$ are smooth, then the branch $\mathcal{S}$ bifurcating from $(\beta^*,1,1)$ is a smooth curve (at least in a neighborhood of that point), and its parametrization can be made more precise. In order to simplify the following computations, we set
\[
(\beta, v) \in \mathcal{S} \Leftrightarrow 0 = \hat{F}(\beta, v, \lambda):= F(\beta, v, v, \beta \lambda)
\]
for some $\lambda \in \Rset^2$, where $F$ is as in \eqref{eq:Fdefn}. Then, $\hat{F} \from \R\times E \times \R^2\to C^{0,\alpha}(\overline{\Omega},\R^2) \times
\R^2$ is smooth and satisfies
\begin{align}
\hat{F}_{v} (\beta, v, \lambda) [w, \ell]  & = \left(- \Delta w_i + \beta (2g_i'(v_j^2)v_i v_j w_j + g_i(v_j^2)w_i - \ell_i v_i - \lambda_i w_i), 2 \int_\Omega v_i w_i \right) ,\\
\hat{F}_{v, \beta} (\beta, v, \lambda) [w, \ell] & = (2 g_i'(v_j^2)v_i v_j w_j + g_i(v_j^2) w_i - \ell_i v_i - \lambda_i w_i, 0), \label{Fuv} \\
\hat{F}_{v, v} (\beta, v, \lambda) [w, \ell; h, p] & = \left( \beta[4 g_i''(v_j^2)v_i v_j^2  + 2g_i'(v_j^2)v_i] w_j h_j \right. \notag \\ & \left. + 2 \beta g_i'(v_j^2)v_j w_j h_i + 2\beta g_i'(v_j^2)v_j w_i h_j - \beta \ell_i h_i - \beta p_i w_i, 2 \int_\Omega h_i w_i \right),  \label{Fuu} \\
\hat{F}_{v, v, v} (\beta, v, \lambda) [w, \ell; h, p; z, q] & = \left( \beta[8 g_i'''v_i v_j^3 + 8 g_i''v_i v_j +  4g_i''v_i v_j]w_j h_j z_j + \beta [4 g_i''v_j^2 + 2 \beta g_i' ]w_j h_j z_i \right. \notag \\
& + \beta[4 g_i''v_j^2 + 2 g_i' ]w_j h_i z_j +\beta [4g_i''v_j^2 + 2 g_i' ]w_i h_j z_j, 0).
\label{Fuuu}
\end{align}

If $(\beta^*,1,1, g_1(1),g_2(1))$ is a simple bifurcation point, then $\Ker(\hat{F}_{v})$ is spanned by the vector $V^* = (-\sqrt{\alpha_1} \psi^*, \sqrt{\alpha_2} \psi^*, 0, 0)$, and $R(\hat{F}_{v}) = \{(\Psi, \cdot) = 0 \}$, where $\Psi = (-\sqrt{\alpha_2} \psi^*, \sqrt{\alpha_1} \psi^*, 0, 0)$. Therefore, arguing as in \cite[Chap. 5]{MR1225101}, if we set
\[
A := (\Psi, \hat{F}_{v, \beta} [V^*]), \quad B := \frac{1}{2}(\Psi, \hat{F}_{v, v} [V^*, V^*]), \quad C := -\frac{1}{6A}(\Psi, \hat{F}_{v, v,v} ,[V^*]^3)
\]
where all the derivatives of $\hat{F}$ are evaluated at $(\beta^*,1,1, g_1(1),g_2(1))$, the following expansions hold true
\[
\beta = \beta^* - \frac{B}{A} \eps + o(\eps), \quad \text{(if $B \neq 0$)},
\]
and
\[
\begin{array}{ll}
v = (1,1) + \frac{A}{B} (\beta - \beta^*)\cdot(\sqrt{\alpha_1} \psi^*, -\sqrt{\alpha_2} \psi^*) + o(\beta - \beta^*) & \text{if $B \neq 0$}, \\
v = (1,1) \pm \left(\frac{\beta - \beta^*}{C}\right)^{1/2}\cdot(\sqrt{\alpha_1} \psi^*, -\sqrt{\alpha_2} \psi^*) + O(\beta - \beta^*) & \text{if $B = 0, C \neq 0$}.
\end{array}
\]
Note that in the latter case, if $C > 0$ (respectively, $C <0$) the bifurcating branch emanates on the right (respectively, left) of $\beta^*$. In our case, the coefficients $A,B,C$ have the explicit form
\begin{align*}
A & = - 4 g_1' g_2' \int (\psi^*)^2 < 0,\\
B & = \beta^*[2g_2''\sqrt{(g_1')^{3}} - 2g_1''\sqrt{(g_2')^{3}} + 3g_1'g_2'(\sqrt{g_1'} - \sqrt{g_2'})] \int (\psi^*)^3, \\
C & = \frac{\beta^*}{-6A}\left[12g_1'g_2'\sqrt{g_1'g_2'} + \sum_{i=1,2} (-8(g_j')^2 g_i''' + 12g_i''(g_j'\sqrt{g_i'g_j'} - (g_j')^2))\right] \int (\psi^*)^4,
\end{align*}
where all the derivatives of $g_i$ are evaluated at $s = 1$.

We observe that if $N=1$, the bifurcation is always \emph{critical},
namely $B = 0$, as every eigenfunction $\psi^*$ satisfies  $\int (\psi^*)^3 = 0$. In the variational case, where $g_i''(1) = g_i'''(1) = 0$, the bifurcating branch emanates on the right, namely $(\beta, v) \in \mathcal{S}$ is such that $ \beta \ge \beta^*$ (and therefore $\nu \le \nu^*$), at least in a neighborhood of $\beta^*$.
\end{rem}

\section{Classification of solutions in dimension \texorpdfstring{$N=1$}{N=1}}\label{sec:1d}

In this section we restrict our attention to the case $\Omega=(0,1)\subset\R$.
Consequently, in the following $(v_1,v_2)$ denotes a solution of the problem ($i,j=1,2$, $j\neq i$)
\begin{equation}\label{eq:1Dsys}
\begin{cases}
-\nu v_i'' + g_i(v_j^2) v_i = \lambda_i v_i & \text{in $(0,1)$} \\
\int_0^1 v_i^2\,dx = 1,\qquad v>0  & \text{in $[0,1]$} \\
v'_i(0) = v'_i (1) = 0 .
\end{cases}
\end{equation}
In particular, each $v_i$ is $C^2([0,1])$, and it has at least
an inflection point in $(0,1)$ (just apply Rolle's Theorem to $v'_i$).
Furthermore, $v''_i(x)$ has the same sign of $g_i(v_j^2(x))-\lambda_i$,
for every $x$.
\begin{lemma}\label{lem:opposite_at_endpoints}
$v_i$ and $v_j$ have opposite concavity at $0$ and $1$. More precisely:
\begin{itemize}
 \item $g_i(v_j^2(0))>\lambda_i \iff g_j(v_i^2(0))<\lambda_j$;
 \item $g_i(v_j^2(1))>\lambda_i \iff g_j(v_i^2(1))<\lambda_j$.
\end{itemize}
\end{lemma}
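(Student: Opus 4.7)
The plan is to reduce both endpoint statements to the case $x=0$ (the case $x=1$ follows by applying everything to the reflected solution $\tilde v_i(x) := v_i(1-x)$, which satisfies the same system) and then argue by contradiction. The equation gives $v''_i(0) = (g_i(v_j^2(0)) - \lambda_i) v_i(0)/\nu$ and $v_i > 0$, so $v''_i(0)$ and $g_i(v_j^2(0)) - \lambda_i$ have the same sign; hence the iff to be proved is equivalent to $v''_1(0) > 0 \iff v''_2(0) < 0$. The two scenarios to exclude are $\mathrm{(A)}$: $v''_1(0) > 0$ and $v''_2(0) \geq 0$, and $\mathrm{(B)}$: $v''_1(0) \leq 0$ and $v''_2(0) < 0$; the second follows from the first by reversing all monotonicities, so I focus on $\mathrm{(A)}$.

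In scenario $\mathrm{(A)}$ I will first show that both $v_i$ are strictly increasing on some right neighborhood $(0,\delta]$ of $0$. For $v_1$ this is immediate from $v_1'(0)=0$ and $v_1''(0)>0$. If $v_2''(0) > 0$, the same argument gives the analogous conclusion for $v_2$; if instead $v_2''(0) = 0$, equivalently $g_2(v_1^2(0)) = \lambda_2$, I will bootstrap from $v_1$: since $v_1^2(x) > v_1^2(0)$ for small $x>0$, strict monotonicity of $g_2$ (see \eqref{Gass}) forces $g_2(v_1^2(x)) > \lambda_2$ and hence $v_2''(x) > 0$ for such $x$, so that $v_2'(x) = \int_0^x v_2''>0$ and $v_2$ is strictly increasing on $(0,\delta]$ as well.

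Since each $v_i$ is non-constant (by nontriviality of the solution, in view of the lemma preceding this one), $v_i''$ must change sign in $(0,1)$, so the quantity
\[
x_0 := \inf\{x > 0 : v_1''(x) = 0 \text{ or } v_2''(x) = 0\}
\]
is well-defined, belongs to $(0,1)$ by the previous paragraph, and is such that on $[0,x_0]$ both $v_i$ (and hence $v_i^2$) are strictly increasing. Then at $x_0$, if $v_1''(x_0) = 0$, one has $g_1(v_2^2(x_0)) = \lambda_1$, and strict monotonicity of $g_1$ yields $\lambda_1 = g_1(v_2^2(x_0)) > g_1(v_2^2(0))$, contradicting the assumption $g_1(v_2^2(0)) > \lambda_1$. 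The case $v_2''(x_0) = 0$ is analogous and contradicts $g_2(v_1^2(0)) \geq \lambda_2$. Scenario $\mathrm{(B)}$ is dealt with by the symmetric strategy, with strict increase replaced by strict decrease.

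The main obstacle is the degenerate subcase $v_2''(0) = 0$ within $\mathrm{(A)}$: the Taylor expansion of $v_2$ at $0$ is forced to vanish through third order, and a naive approach pursuing a fourth-order expansion would need $g_2'(v_1^2(0))>0$, which is not directly guaranteed. The shortcut above sidesteps this entirely by transferring the strict monotonicity of $v_1$ near $0$ into strict positivity of $v_2''$ for small $x>0$, using only strict monotonicity of $g_2$; once both components are shown to be strictly increasing on $[0,x_0]$, the rest of the argument is a clean application of the intermediate value/monotonicity principle.
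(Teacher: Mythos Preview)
Your proof is correct and follows essentially the same approach as the paper's: assume both second derivatives are nonnegative at the endpoint (one strictly), propagate convexity/monotonicity of both components up to the first inflection, and obtain a contradiction there via the strict monotonicity of $g_i$. The only cosmetic difference is that the paper takes $\xi$ to be the first zero of $v_i''$ alone (the component with the strict inequality) and then \emph{derives} $v_j''>0$ on $(0,\xi]$ from $v_i$'s monotonicity, which absorbs your degenerate subcase $v_j''(0)=0$ automatically and avoids the separate bootstrap step; your choice of $x_0$ as the first zero of \emph{either} second derivative is equally valid but forces you to handle that subcase explicitly.
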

\begin{proof}
Let us assume, for instance, $g_i(v_j^2(0))>\lambda_i$ and, by contradiction,
$g_j(v_i^2(0))\geq\lambda_j$ (the other cases are analogous).

Then $v''_i(0)>0$, and there exists $\xi\in(0,1]$ such that
\begin{equation}\label{eq:contrinlem}
v''_i>0\text{ in }[0,\xi),\qquad v''_i(\xi)=0 \text{ (and hence
$g_i(v_j^2(\xi))=\lambda_i$).}
\end{equation}
Notice that $\xi<1$, otherwise $v_i$ would have no inflection point in $(0,1)$.
By convexity and monotonicity we deduce that
\[
x\in(0,\xi] \quad\implies\quad v_i(x)>v_i(0)\quad \implies\quad g_j(v_i^2(x))-\lambda_j
> g_j(v_i^2(0))-\lambda_j\geq 0.
\]
But then also $v_j$ is (convex and) increasing in $[0,\xi]$, so that
\[
g_i(v_j^2(\xi))>g_i(v_j^2(0))>\lambda_i,
\]
in contradiction with \eqref{eq:contrinlem}.
\end{proof}
Next we exploit standard uniqueness results for ODEs in order to detect a number
of situations in which a considered solution is the trivial one.
\begin{lemma}\label{lem:uniq_trivial}
Let one of the following condition hold:
\begin{enumerate}
 \item there exists $\xi\in[0,1]$ such that
\[
g_1(v_2^2(\xi))=\lambda_1,\qquad g_2(v_1^2(\xi))=\lambda_2,
\qquad v_1'(\xi)=v_2'(\xi)=0;
\]
 \item there exist $0\leq x_{1}< x_2\leq1$ such that, for some $i$,
 $v_i$ is constant in $I=[x_1,x_2]$;
 \item for some $i$, $g_i(v_j^2(0))=\lambda_i$;
 \item for some $i$, $g_i(v_j^2(1))=\lambda_i$;
\end{enumerate}
Then $(v_1,v_2)$ is the trivial solution.
\end{lemma}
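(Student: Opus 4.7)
The proof naturally divides along the four cases, of which (1) is the engine and the others reduce to it by short geometric arguments.

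For case (1), my plan is to invoke uniqueness for the Cauchy problem associated with the second-order ODE system \eqref{eq:1Dsys}. Rewritten as a first-order system in $(v_1,v_1',v_2,v_2')$, the right-hand side is locally Lipschitz (indeed $C^2$, since $g_i\in C^2$). At $\xi$ the Cauchy data is $(v_1(\xi),0,v_2(\xi),0)$. The constant pair $(\bar v_1,\bar v_2):=(v_1(\xi),v_2(\xi))$ is itself a solution of the same system with the same multipliers $\lambda_i$, precisely because the assumption $g_i(v_j^2(\xi))=\lambda_i$ yields $g_i(\bar v_j^2)\bar v_i=\lambda_i\bar v_i$. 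Picard--Lindel\"of then forces $(v_1,v_2)\equiv(\bar v_1,\bar v_2)$ on $[0,1]$, and the normalization $\int_0^1 v_i^2=1$ combined with $|\Omega|=1$ makes $\bar v_i=1$, so the pair is trivial.

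For case (2), if $v_i$ is constant on $I=[x_1,x_2]$ then $v_i''\equiv 0$ there, so the equation for $v_i$ (together with $v_i>0$) gives $g_i(v_j^2)\equiv\lambda_i$ on $I$. The strict monotonicity of $g_i$ in \eqref{Gass} then forces $v_j$ to be constant on $I$ as well; using the equation for $v_j$ this in turn gives $g_j(v_i^2)\equiv\lambda_j$ on $I$. Picking any $\xi\in I$ all hypotheses of case (1) are satisfied, and we conclude.

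For cases (3) and (4) the idea is to upgrade the partial endpoint identity to a full case (1) configuration. Consider case (3): assume $g_i(v_j^2(0))=\lambda_i$. Lemma \ref{lem:opposite_at_endpoints} asserts both $g_i(v_j^2(0))>\lambda_i\iff g_j(v_i^2(0))<\lambda_j$ and, by swapping indices, $g_j(v_i^2(0))>\lambda_j\iff g_i(v_j^2(0))<\lambda_i$. Taking contrapositives, the assumed equality squeezes $g_j(v_i^2(0))$ between $\lambda_j$ and $\lambda_j$, so $g_j(v_i^2(0))=\lambda_j$. Coupled with the Neumann conditions $v_1'(0)=v_2'(0)=0$, this puts us precisely in case (1) at $\xi=0$. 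Case (4) is handled identically at $\xi=1$.

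The only delicate point is the ODE uniqueness step in case (1), but given the smoothness of $g_i$ this is immediate from the standard Cauchy--Lipschitz theorem; everything else is bookkeeping built on Lemma \ref{lem:opposite_at_endpoints} and the strict monotonicity condition in \eqref{Gass}.
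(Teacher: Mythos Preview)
Your proof is correct and follows essentially the same approach as the paper: Case (1) is handled by Cauchy--Lipschitz uniqueness against the constant solution, Case (2) reduces to Case (1) after observing that constancy of one component forces constancy of the other, and Cases (3)--(4) reduce to Case (1) at the endpoint by using Lemma~\ref{lem:opposite_at_endpoints} (and its contrapositive) to obtain the second equality $g_j(v_i^2)=\lambda_j$. Your contrapositive squeeze in Cases (3)--(4) makes explicit what the paper states in one line, but the argument is the same.
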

\begin{proof}
Under the assumptions of case 1, uniqueness for the Cauchy problem
\[
\begin{cases}
-\nu v_i'' + g_i(v_j^2) v_i = \lambda_i v_i \qquad \text{in $(0,1)$} \\
v_i(\xi) = \sqrt{g_j^{-1}(\lambda_j)},\quad v_i'(\xi)=0,\qquad i=1,2,\,j\neq i
\end{cases}
\]
implies that both $v_1$ and $v_2$ are constant, and we can conclude exploiting
the normalization in $L^2(0,1)$.

If 2 holds, the equation for $v_i$ implies $g_i(v_j^2(\xi))=\lambda_i$ on
$I$. But then also $v_j$ is constant in $I$, forcing $g_j(v_i^2(\xi))=\lambda_j$.
Since both $v_i'$ and $v'_j$ are identically zero in $I$, case 1. applies.

Recalling the Neumann boundary conditions, also cases 3 and 4 can be reduced to 1:
indeed, by Lemma \ref{lem:opposite_at_endpoints}, $g_i(v_j^2)-\lambda_i$ vanishes
at one endpoint if and only if $g_j(v_i^2)-\lambda_j$ does.
\end{proof}
The following key lemma asserts that between two consecutive maxima of each $v_i$ there
exists an interval of concavity of $v_j$.
\begin{lemma}\label{lem:key1D}
Let $0\leq x_{1}< x_2\leq1$ be such that, for some $i$,
\[
v'_i(x_1) = v'_i(x_2)=0,\qquad v''_i(x_1)\leq0,\, v''_i(x_2)\leq0.
\]
Then either $(v_1,v_2)$ is the trivial solution, or there exists $\xi\in(x_1,x_2)$
such that
\[
g_j(v_i^2(\xi))<\lambda_j.
\]
Analogously, if $v_i'$ vanishes and $v''_i$ is
nonnegative at $x_1,x_2$ then $g_j(v_i^2(\xi))>\lambda_j$ for some $\xi
\in(x_1,x_2)$.
\end{lemma}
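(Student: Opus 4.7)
The plan is to argue by contradiction. Assume that $g_j(v_i^2(\xi)) \geq \lambda_j$ for every $\xi \in (x_1,x_2)$, and derive that $(v_1,v_2)$ must be the trivial solution. From the equation $-\nu v_j'' = (\lambda_j - g_j(v_i^2))\, v_j$ and $v_j > 0$, this assumption immediately gives $v_j'' \geq 0$ on $[x_1, x_2]$; that is, $v_j$ is convex on the interval. In particular, being positive and convex, $v_j$ attains its maximum on $[x_1,x_2]$ at an endpoint, so $v_j(x) \leq \max\{v_j(x_1), v_j(x_2)\}$ for every $x \in [x_1,x_2]$.

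Next, I would translate the endpoint hypothesis $v_i''(x_k) \leq 0$ into information about $v_j$: via the equation for $v_i$, it is equivalent to $g_i(v_j^2(x_k)) \leq \lambda_i$ for $k=1,2$. Combining this with the previous step and the monotonicity of $g_i$, one gets
\[
g_i(v_j^2(x)) \leq \max\{g_i(v_j^2(x_1)),\, g_i(v_j^2(x_2))\} \leq \lambda_i \qquad \text{for all } x \in [x_1,x_2].
\]
Feeding this back into $-\nu v_i'' = (\lambda_i - g_i(v_j^2)) v_i$ yields $v_i'' \leq 0$ on $[x_1, x_2]$, i.e., $v_i$ is concave on that interval.

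The final step is purely one-variable calculus: a concave function whose derivative vanishes at both endpoints must be constant on the interval, since $v_i'$ is nonincreasing and $v_i'(x_1) = v_i'(x_2) = 0$ force $v_i' \equiv 0$ on $[x_1, x_2]$. Then Lemma~\ref{lem:uniq_trivial}(2) implies that $(v_1,v_2)$ is the trivial solution, completing the contradiction (or rather, identifying the first of the two alternatives in the statement). The dual statement---with $v_i''(x_1), v_i''(x_2) \geq 0$ yielding $g_j(v_i^2(\xi)) > \lambda_j$ at some interior $\xi$---follows by the mirror argument: the contradiction assumption gives $v_j$ concave, hence bounded below on $[x_1,x_2]$ by $\min\{v_j(x_1),v_j(x_2)\}$, which translates to $v_i$ convex, and a convex function with $v_i'(x_1) = v_i'(x_2) = 0$ is again constant. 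I do not foresee any real obstacle beyond carefully bookkeeping the sign convention that relates $v_i''$ to $g_i(v_j^2) - \lambda_i$ (and the same for the roles swapped), and invoking the right case of Lemma~\ref{lem:uniq_trivial} at the end.
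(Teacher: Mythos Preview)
Your argument is correct and matches the paper's proof essentially line for line: assume the inequality fails on $(x_1,x_2)$, deduce convexity of $v_j$, use the endpoint condition $v_i''(x_k)\le 0$ to bound $g_i(v_j^2)$ by $\lambda_i$ throughout, conclude $v_i$ is concave with vanishing derivative at both ends and hence constant, and invoke Lemma~\ref{lem:uniq_trivial}(2). The dual case is handled the same way in both.
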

\begin{proof}
We have to show that, in case $g_j(v_i^2(x))\geq\lambda_j$ for every
$x\in[x_1,x_2]$, then $(v_1,v_2)$ is the trivial solution. Under such assumption
we have that
\[
\begin{cases}
v_j''\geq0 & \text{in }(x_1,x_2)\\
g_i(v_j^2)\leq\lambda_i & \text{at }\{x_1,x_2\},
\end{cases}
\qquad\text{so that }g_i(v_j^2)\leq\lambda_i\text{ in the whole }[x_1,x_2].
\]
Thus $v_i''\leq 0$ in $[x_1,x_2]$. Since $v_i'=0$ at $x_1$ and $x_2$, we obtain
that $v_i$ is constant in $[x_1,x_2]$, and Lemma \ref{lem:uniq_trivial} (case 2)
applies, concluding the proof.
\end{proof}
The above result provides a sharp control on the critical and inflection
points of each $v_i$, as we show in the next sequence of lemmas.
\begin{lemma}\label{lem:isolated_crit}
If $(v_1,v_2)$ is non trivial then both components have only isolated critical points.
\end{lemma}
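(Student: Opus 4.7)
I argue by contraposition: assume some component, say $v_i$, has a non-isolated critical point $\bar x\in[0,1]$, and prove that $(v_1,v_2)$ must be the trivial solution by checking the hypotheses of Lemma \ref{lem:uniq_trivial}. Pick a sequence $x_n\to\bar x$, $x_n\neq\bar x$, with $v_i'(x_n)=0$. Writing $v_i''(\bar x)$ as the limit of the difference quotients $(v_i'(x_n)-v_i'(\bar x))/(x_n-\bar x)$ yields $v_i''(\bar x)=0$; from the ODE $-\nu v_i''=(\lambda_i-g_i(v_j^2))v_i$ and the positivity of $v_i$ we deduce $g_i(v_j^2(\bar x))=\lambda_i$. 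If $\bar x$ is an endpoint, this identity is precisely case 3 or case 4 of Lemma \ref{lem:uniq_trivial}, so $(v_1,v_2)$ is trivial and we are done.

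In the substantive case $\bar x\in(0,1)$, the aim is to verify the full condition of case 1 of Lemma \ref{lem:uniq_trivial} at $\xi=\bar x$: two of the four required conditions ($g_i(v_j^2(\bar x))=\lambda_i$ and $v_i'(\bar x)=0$) already hold, so it remains to produce $v_j'(\bar x)=0$ together with $g_j(v_i^2(\bar x))=\lambda_j$. Extracting a monotone subsequence from $\{x_n\}$, Rolle's theorem applied between consecutive $x_n$ produces distinct points $y_n\to\bar x$ with $v_i''(y_n)=0$. At each $y_n$ the ODE gives $g_i(v_j^2(y_n))=\lambda_i$, and the strict monotonicity of $g_i$ from \eqref{Gass} pins down $v_j(y_n)$ to the constant value $v_j(\bar x)$. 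A second application of Rolle between consecutive $y_n$'s then yields zeros of $v_j'$ accumulating at $\bar x$; the usual difference-quotient limit gives $v_j'(\bar x)=v_j''(\bar x)=0$, and the equation for $v_j$ evaluated at $\bar x$ (combined with $v_j(\bar x)>0$) delivers $g_j(v_i^2(\bar x))=\lambda_j$.

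All four hypotheses of case 1 of Lemma \ref{lem:uniq_trivial} are thus in force, forcing $(v_1,v_2)$ to be trivial and completing the contrapositive. The only substantive step is the propagation of the accumulation of zeros from $v_i'$ to $v_j'$; this is achieved by the double Rolle argument above, which crucially exploits the strict monotonicity of $g_i$ to translate the level-set condition $g_i(v_j^2(y_n))=\lambda_i$ into the level-set condition $v_j(y_n)=v_j(\bar x)$ needed to iterate. Without that monotonicity, one would only know that $v_j^2(y_n)$ lies in the (possibly large) preimage $g_i^{-1}(\lambda_i)$, and the chain from $v_i$ to $v_j$ would break down.
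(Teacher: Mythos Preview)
Your proof is correct and follows essentially the same double-Rolle strategy as the paper: accumulate zeros of $v_i'$, pass via Rolle to zeros of $v_i''$ (hence to the level set $g_i(v_j^2)=\lambda_i$), use strict monotonicity of $g_i$ to force $v_j$ to take a fixed value along that sequence, and apply Rolle once more to obtain accumulating zeros of $v_j'$.

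The one noteworthy difference is how the remaining condition $g_j(v_i^2(\bar x))=\lambda_j$ is obtained. The paper invokes Lemma~\ref{lem:key1D}: from accumulating critical points one can extract subsequences where $v_i''\ge 0$ and where $v_i''\le 0$, and that lemma then produces points where $g_j(v_i^2)$ lies strictly above and strictly below $\lambda_j$, squeezing the value at $\bar x$. You instead push the Rolle argument one step further: from the accumulating zeros of $v_j'$ you read off $v_j''(\bar x)=0$ by the difference-quotient limit, and then the equation for $v_j$ (with $v_j(\bar x)>0$) gives $g_j(v_i^2(\bar x))=\lambda_j$ directly. Your route is slightly more self-contained, as it does not rely on Lemma~\ref{lem:key1D}; the paper's route, on the other hand, reuses a structural lemma that is needed elsewhere in the section anyway.
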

\begin{proof}
Let by contradiction $\xi\in[0,1]$ be an accumulation point for the set of critical
points of $v_i$. Of course
\[
v_i'(\xi)=0.
\]
We recall that, for any pair of critical points $x_1<x_2$, if both $v_i''(x_1)>0$
and $v_i''(x_2)>0$ then there exists a third critical point $y_1\in(x_1,x_2)$ such
that $v_i''(y_1)\leq0$ (and the same holds for opposite inequalities).
Using this fact, it is not difficult to construct two sequences $x_n\to\xi$,
$y_n\to\xi$ such that
\[
v_i'(x_n)=v_i'(y_n)=0, \qquad v_i''(x_n)\geq0,\,v_i''(y_n)\leq0.
\]
Applying repeatedly Lemma \ref{lem:key1D} we deduce the existence of sequences
$\overline{\xi}_n\to\xi$, $\underline{\xi}_n\to\xi$ such that
$g_j(v_i^2(\underline{\xi}_n))<\lambda_j<g_j(v_i^2(\overline{\xi}_n))$.
This promptly yields
\[
g_j(v_i^2(\xi))=\lambda_j.
\]
Now back to the sequence $(x_n)$, applying Rolle's Theorem we first deduce the
existence of a sequence $z_n\to\xi$ such that $0= v_i''(z_n) = g_i(v_j^2(z_n))
-\lambda_i$, implying
\[
g_i(v_j^2(\xi))=\lambda_i,
\]
and then of a sequence $z'_n\to\xi$ with
\[
0=v_j'(z'_n)\to v_j'(\xi).
\]
Summing up, we are in a position to apply Lemma \ref{lem:uniq_trivial} (Case 1),
obtaining that $(v_1,v_2)$ is trivial, a contradiction.
\end{proof}
\begin{lemma}\label{lem:nondeg_extrema}
Let $x_0\in[0,1]$ be a point of local minimum for $v_i$. Then either $(v_1,v_2)$
is the trivial solution, or
\[
g_j(v_i^2(x_0))<\lambda_j,\qquad g_i(v_j^2(x_0))>\lambda_i
\]
(in particular, it is non degenerate). An analogous statement (with reverse inequalities) holds for local maxima.
\end{lemma}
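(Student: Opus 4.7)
The plan is to separate the boundary and interior cases, prove the inequality $g_j(v_i^2(x_0)) < \lambda_j$ by applying Lemma \ref{lem:key1D} to an interval cut out by the nearest critical points of $v_i$ around $x_0$, and prove $g_i(v_j^2(x_0)) > \lambda_i$ by contradiction via a Taylor expansion at a possibly degenerate critical point.

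If $x_0 \in \{0, 1\}$, the Neumann condition gives $v_i'(x_0) = 0$, and the local minimum property forces $v_i''(x_0) \geq 0$, that is $g_i(v_j^2(x_0)) \geq \lambda_i$. If equality holds, Lemma \ref{lem:uniq_trivial} (case 3 or 4) yields the trivial solution, while if the inequality is strict, Lemma \ref{lem:opposite_at_endpoints} immediately provides the paired inequality $g_j(v_i^2(x_0)) < \lambda_j$. Now let $x_0 \in (0,1)$ and assume $(v_1, v_2)$ is non-trivial. Lemma \ref{lem:isolated_crit} guarantees that the critical points of $v_i$ are isolated, and Lemma \ref{lem:uniq_trivial} (case 2) rules out $v_i$ being locally constant at $x_0$; hence there exist consecutive critical points $x_1 < x_0 < x_2$ of $v_i$ (possibly $x_1 = 0$ or $x_2 = 1$ by virtue of the Neumann condition) with $v_i$ strictly decreasing on $[x_1, x_0]$ and strictly increasing on $[x_0, x_2]$; in particular $v_i''(x_1), v_i''(x_2) \leq 0$. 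Lemma \ref{lem:key1D} applied to $(x_1, x_2)$ provides $\xi \in (x_1, x_2)$ with $g_j(v_i^2(\xi)) < \lambda_j$, and since $x_0$ is the minimum of $v_i$ over $[x_1, x_2]$, the monotonicity of $g_j$ yields $g_j(v_i^2(x_0)) \leq g_j(v_i^2(\xi)) < \lambda_j$.

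For the remaining inequality, assume by contradiction that $g_i(v_j^2(x_0)) = \lambda_i$, so that $v_i''(x_0) = 0$. Since $x_0$ is a local minimum, Taylor expansion forces $v_i'''(x_0) = 0$ (otherwise $v_i$ would have an inflection there, not an extremum). Differentiating $-\nu v_i'' = (\lambda_i - g_i(v_j^2)) v_i$ and evaluating at $x_0$, using $v_i'(x_0) = 0$, the vanishing of $v_i'''(x_0)$ combined with $g_i' > 0$ and $v_i, v_j > 0$ forces $v_j'(x_0) = 0$. Differentiating once more and using all the zeros already established, one obtains
\[
\nu\, v_i^{(4)}(x_0) = 2\, g_i'(v_j^2(x_0))\, v_j(x_0)\, v_j''(x_0)\, v_i(x_0).
\]
The local minimum property then requires $v_i^{(4)}(x_0) \geq 0$, hence $v_j''(x_0) \geq 0$, that is $g_j(v_i^2(x_0)) \geq \lambda_j$; this contradicts the strict inequality established in the previous paragraph.

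The main obstacle is the interior part of $g_i(v_j^2(x_0)) > \lambda_i$: the degenerate possibility $v_i''(x_0) = 0$ at an interior local minimum can only be ruled out by climbing up to fourth derivatives and invoking both the strict positivity of the components and the strict monotonicity of $g_i$. The statement for local maxima follows by the same scheme after reversing concavities and invoking the second half of Lemma \ref{lem:key1D}.
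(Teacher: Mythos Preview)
Your proof is correct. The boundary case and the derivation of the first inequality $g_j(v_i^2(x_0))<\lambda_j$ coincide with the paper's argument: isolate the critical points on either side of $x_0$, apply Lemma~\ref{lem:key1D}, and use that $x_0$ is the minimum of $v_i$ on $[x_1,x_2]$.

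For the second inequality the two arguments diverge. The paper does not compute higher derivatives; instead it observes that, by the first inequality and continuity, $g_j(v_i^2)<\lambda_j$ in a neighborhood of $x_0$, hence $v_j''<0$ there and $v_j$ is strictly concave. From this qualitative fact the paper reads off the strict inequality directly: if $g_i(v_j^2(x_0))=\lambda_i$, the strict concavity of $v_j$ forces the sign pattern of $g_i(v_j^2)-\lambda_i$ (and hence of $v_i''$) near $x_0$ to be incompatible with $x_0$ being a strict local minimum of $v_i$. Your route instead pushes the Taylor expansion to fourth order, obtaining $v_j'(x_0)=0$ and then $\nu v_i^{(4)}(x_0)=2g_i'(v_j^2(x_0))v_j(x_0)v_j''(x_0)v_i(x_0)$, and closes the contradiction via $v_i^{(4)}(x_0)\ge 0$ versus the already-proved $g_j(v_i^2(x_0))<\lambda_j$. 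This is perfectly valid (and the $C^4$ regularity is available since $g_i\in C^2$ makes the right-hand side of the equation $C^2$), but it is more computational. The paper's argument has the advantage of needing only the sign of $v_i''$ and the strict concavity of $v_j$, so it would survive with less regularity on $g_i$; your argument has the advantage of being fully explicit and not relying on a case analysis of the behaviour of $v_j$ near $x_0$. One tiny cosmetic point: when you ``assume by contradiction that $g_i(v_j^2(x_0))=\lambda_i$'' you are implicitly using that $g_i(v_j^2(x_0))<\lambda_i$ is impossible at a local minimum (since it would give $v_i''(x_0)<0$); this is obvious but worth a half-line.
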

\begin{proof}
If $x_0=0$ or $x_0=1$, then the statement is a consequence of Lemma
\ref{lem:opposite_at_endpoints}. Otherwise, since $x_0$ is an isolated critical
point, it is a strict minimum, and the following points are well defined:
\[
x_1=\inf\{x\in[0,x_0) : v'_i<0\text{ in }(x,x_0)\},\qquad
x_2=\sup\{x\in(x_0,1] : v'_i>0\text{ in }(x_0,x)\}.
\]
Then $x_1,x_2$ satisfy the assumptions of Lemma \ref{lem:key1D}, providing
the existence of $\xi\in(x_1,x_2)$ such that
\[
g_j(v_i^2(x_0))=\min_{[x_1,x_2]}g_j(v_i^2)\leq
g_j(v_i^2(\xi))<\lambda_j,
\]
which is the first inequality required.

On the other hand, since $x_0$ is an isolated strict minimum we have that
$g_i(v_j^2(x_0))\geq\lambda_i$ in a neighborhood of $x_0$. Since the last
inequality implies that $v_j$ is strictly concave in a neighborhood of $x_0$,
we deduce also the second (strict) inequality.
\end{proof}
\begin{lemma}\label{lem:nondeg_critical}
If $(v_1,v_2)$ is not the trivial solution, then any critical point of each
component is non degenerate.
\end{lemma}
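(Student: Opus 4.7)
The plan is to argue by contradiction: I would assume $(v_1,v_2)$ is non-trivial and that $v_i$ has a critical point $x_0$ at which $v_i''(x_0) = 0$ (equivalently, $g_i(v_j^2(x_0)) = \lambda_i$), and then derive an absurdity. The endpoint cases $x_0\in\{0,1\}$ fall immediately to Lemma~\ref{lem:uniq_trivial} (items 3 and 4), so I may from now on assume $x_0\in(0,1)$. Lemma~\ref{lem:nondeg_extrema} rules out $x_0$ being a local extremum, while Lemma~\ref{lem:isolated_crit} ensures the critical set of $v_i$ is discrete. Combining these two facts, $v_i'$ has the same non-zero sign on each side of $x_0$; after a possible change of sign convention, I assume $v_i' > 0$ in a punctured neighborhood of $x_0$, so that $v_i$ passes through $x_0$ as a strictly increasing function.

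The core step is to sandwich $x_0$ between its two nearest critical points,
\[
x_m := \sup\{x \in [0,x_0) : v_i'(x) = 0\},\qquad x_M := \inf\{x \in (x_0,1] : v_i'(x) = 0\},
\]
which are well-defined since $v_i'(0) = v_i'(1) = 0$. Discreteness of the critical set forces $v_i' > 0$ on $(x_m, x_0) \cup (x_0, x_M)$, and a one-sided difference-quotient then yields $v_i''(x_m) \geq 0$ and $v_i''(x_M) \leq 0$. Using also $v_i''(x_0) = 0$, Lemma~\ref{lem:key1D} applies on $[x_m, x_0]$ (both concavities $\geq 0$) to produce $\xi \in (x_m, x_0)$ with $g_j(v_i^2(\xi)) > \lambda_j$, and on $[x_0, x_M]$ (both concavities $\leq 0$) to produce $\xi' \in (x_0, x_M)$ with $g_j(v_i^2(\xi')) < \lambda_j$.

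To close, I would exploit strict monotonicity: on $[x_m, x_M]$ the derivative $v_i'$ is non-negative and vanishes only at the isolated points $x_m, x_0, x_M$, so $v_i$ is strictly increasing on this interval and hence $v_i(\xi) < v_i(\xi')$. On the other hand, $v_i > 0$ together with the strict monotonicity of $g_j$ turn the two inequalities above into $v_i(\xi) > v_i(\xi')$, a contradiction.

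I expect the main obstacle to be modest bookkeeping rather than anything deep: one must check that the construction survives when $x_m = 0$ or $x_M = 1$ (the one-sided concavity information at these points is still produced correctly from the Neumann condition together with the sign of $v_i'$ just inside the interval), and one must observe that Lemma~\ref{lem:key1D} is legitimately invoked in both of its forms at the common endpoint $x_0$, where $v_i''(x_0) = 0$ serves simultaneously as a $\leq 0$ and as a $\geq 0$ hypothesis. Beyond those small checks, the argument is a direct assembly of Lemmas~\ref{lem:key1D}, \ref{lem:isolated_crit} and \ref{lem:nondeg_extrema}.
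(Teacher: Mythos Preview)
Your proof is correct and takes a genuinely different route from the paper's. The paper argues in one stroke: once $\xi$ is recognized as an isolated critical point of $v_i$ that is not an extremum, it asserts that $\xi$ is a local extremum for the \emph{other} component $v_j$; Lemma~\ref{lem:nondeg_extrema}, applied now to $v_j$, then forces $g_i(v_j^2(\xi))\neq\lambda_i$, contradicting $v_i''(\xi)=0$. Your argument instead brackets $x_0$ by the adjacent critical points $x_m<x_0<x_M$ and exploits $v_i''(x_0)=0$ as a two-sided hypothesis, invoking Lemma~\ref{lem:key1D} in each of its forms on $[x_m,x_0]$ and $[x_0,x_M]$ to produce $\xi<x_0<\xi'$ with $g_j(v_i^2(\xi))>\lambda_j>g_j(v_i^2(\xi'))$, which clashes with the strict monotonicity of $v_i$ on $[x_m,x_M]$. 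Your route is a bit longer but entirely transparent: every step is a direct consequence of lemmas already in hand, and you sidestep the paper's assertion that $\xi$ is a local extremum for $v_j$ (a claim the paper states without justification, and which is not entirely obvious---for instance, if $v_j'(\xi)>0$ then $v_i''$ changes sign from negative to positive at $\xi$, which is perfectly consistent with $\xi$ being a strict local minimum of $v_i'$).
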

\begin{proof}
Let us assume by contradiction that $\xi$ is a degenerate critical point of $v_i$.
By Lemmas \ref{lem:isolated_crit}, \ref{lem:nondeg_extrema} we have that
$\xi\in(0,1)$ is an isolate inflection point. Therefore
\[
v_i'(\xi)=0, \qquad g_i(v_j^2(\xi))=\lambda_i,
\]
and $\xi$ is a local extremum for $v_j$. But then Lemma \ref{lem:nondeg_extrema}
applies again, implying that either $g_i(v_j^2(\xi))>\lambda_i$ or
$g_i(v_j^2(\xi))<\lambda_i$, a contradiction.
\end{proof}
\begin{lemma}\label{lem:1flessoogni2estremi}
Let $(v_1,v_2)$ be non trivial, and $x_1<x_2$ be such that, for some $i$,
\[
v_i'(x_1)=v_i'(x_2)=0, \qquad v_i'>0\text{ in }(x_1,x_2).
\]
Then both $v_i$ and $v_j$ have exactly one inflection point in $[x_1,x_2]$.
An analogous statement holds for the opposite monotonicity.
\end{lemma}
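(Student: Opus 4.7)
The plan is to extract sign information at the endpoints, handle the inflection of $v_j$ via the monotonicity of $v_i$, and then reduce the count for $v_i$ to a shape analysis of $v_j$ anchored at its unique inflection point.

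By hypothesis $v_i$ is strictly increasing on $(x_1,x_2)$ with $v_i'=0$ at both endpoints, so $x_1$ is a strict local minimum and $x_2$ a strict local maximum of $v_i$ (any degeneracy is excluded by Lemma \ref{lem:nondeg_critical}). Lemma \ref{lem:nondeg_extrema} then yields $v_i''(x_1)>0$, $v_j''(x_1)<0$, $v_i''(x_2)<0$, $v_j''(x_2)>0$. Since $v_j>0$, the inflection points of $v_j$ are exactly the zeros of $h_j(x):=g_j(v_i^2(x))-\lambda_j$. As $v_i$ is strictly increasing on $[x_1,x_2]$ and $g_j$ is strictly increasing, $h_j$ is strictly increasing; combined with the endpoint signs this produces a unique zero $\eta\in(x_1,x_2)$ at which $v_j''$ changes sign, so $v_j$ has exactly one inflection point.

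For $v_i$, set $c:=\sqrt{g_i^{-1}(\lambda_i)}$, so that $v_i''(x)=0$ iff $v_j(x)=c$. From the endpoint signs of $v_i''$ one has $v_j(x_1)>c>v_j(x_2)$, hence at least one crossing by the intermediate value theorem. To control the total number of crossings, I would exploit the shape produced in the previous step: $v_j''<0$ on $[x_1,\eta)$ and $v_j''>0$ on $(\eta,x_2]$, so $v_j'$ is strictly decreasing on $[x_1,\eta]$ and strictly increasing on $[\eta,x_2]$, and can have at most one zero on each of these subintervals. A short check rules out $v_j'(\eta)\ge 0$ (which would force $v_j$ nondecreasing on $[x_1,x_2]$, contradicting $v_j(x_1)>v_j(x_2)$; the equality $v_j'(\eta)=0$ is anyway excluded, since combined with $v_j''(\eta)=0$ it would produce a degenerate critical point, against Lemma \ref{lem:nondeg_critical}). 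Hence $v_j'(\eta)<0$, any interior critical point of $v_j$ in $(x_1,\eta)$ is a strict local maximum satisfying $v_j>c$ (by Lemma \ref{lem:nondeg_extrema} applied to $v_j$), and any one in $(\eta,x_2)$ is a strict local minimum satisfying $v_j<c$.

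An elementary inspection of the four possible configurations (no interior critical point of $v_j$; only a maximum in $(x_1,\eta)$; only a minimum in $(\eta,x_2)$; or both) then shows that in every case $v_j$ attains the level $c$ at exactly one point $\xi\in(x_1,x_2)$. Since $\xi$ is not a critical point of $v_j$, the crossing is transverse, $v_i''$ changes sign at $\xi$, and $v_i$ has exactly one inflection point. The main obstacle is the case-by-case shape analysis in this last step; what makes it tractable is the rigid concave/convex structure produced by the unique inflection $\eta$, which bounds the number of critical points of $v_j$ in $[x_1,x_2]$ by two.
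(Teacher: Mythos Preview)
Your proof is correct and follows the same overall strategy as the paper: first establish the unique inflection $\eta$ of $v_j$ via the strict monotonicity of $g_j(v_i^2)-\lambda_j$, then exploit the resulting concave/convex shape of $v_j$ to count the level crossings $v_j=c$ (equivalently, the inflection points of $v_i$).

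The only difference is in the execution of this last count. You classify the (at most two) interior critical points of $v_j$ in $[x_1,x_2]$ and run through the four configurations. The paper bypasses this by observing directly that, since $v_j$ is concave on $[x_1,\eta]$ with $v_j(x_1)>c$, the equation $v_j=c$ has at most one solution there (the superlevel set $\{v_j\ge c\}$ of a concave function is an interval, and it contains $x_1$); dually, convexity on $[\eta,x_2]$ with $v_j(x_2)<c$ gives at most one solution there. Together with the endpoint signs $v_j(x_1)>c>v_j(x_2)$ this forces exactly one crossing. This shortcut is shorter but entirely equivalent to your case analysis; in particular your argument that $v_j'(\eta)<0$ is not needed once one uses concavity/convexity directly.
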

\begin{proof}
By Lemma \ref{lem:nondeg_extrema} we immediately deduce the existence
of $\xi\in(x_1,x_2)$ such that
\begin{equation}\label{eq:flessi_extr1}
v_j''<0\text{ in }[x_1,\xi),\qquad v_j''>0\text{ in }(\xi,x_2],
\end{equation}
and $v_j$ has exactly one inflection point in $[x_1,x_2]$.

On the other hand, the inflection points of $v_i$ are the solutions of
\begin{equation}\label{eq:flessi_extr2}
g_j(v_i^2(x))=\lambda_j,\qquad x\in[x_1,x_2].
\end{equation}
Since $g_j(v_i^2(x_1))>\lambda_j$ and $g_j(v_i^2(x_2))<\lambda_j$ (and again by Lemma \ref{lem:nondeg_extrema}),
equation \eqref{eq:flessi_extr2} has an odd number of solutions. On the other hand, taking into account
\eqref{eq:flessi_extr1}, equation \eqref{eq:flessi_extr2} has at most one solution in $[x_1,\xi]$ and one in $[\xi,x_2]$, respectively.
\end{proof}
Collecting the previous results we have the following characterization of nontrivial solutions.
\begin{proposition}\label{prop:degree}
If $(v_1,v_2)$ is not the trivial solution, then there exists $k\in\N$ such that
both $v_1$ and $v_2$ have exactly $k$ critical points, all non degenerate, and $k+1$
isolated inflection points in $(0,1)$.
\end{proposition}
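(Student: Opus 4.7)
My plan is to combine the previous lemmas into a finite combinatorial count, using the monotonic intervals of one component to locate both its own inflection points and those of the other.

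First, by Lemma \ref{lem:isolated_crit} the critical points of each $v_i$ in the compact interval $[0,1]$ are isolated, hence finite, and by Lemma \ref{lem:nondeg_critical} they are non-degenerate (which is part of the claim). Let $k_i$ denote the number of critical points of $v_i$ in the \emph{open} interval $(0,1)$. Together with the Neumann endpoints $0$ and $1$, the set of zeros of $v_i'$ in $[0,1]$ has cardinality $k_i+2$ and partitions $[0,1]$ into $k_i+1$ maximal intervals of strict monotonicity for $v_i$.

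Next, I apply Lemma \ref{lem:1flessoogni2estremi} to each of these $k_i+1$ monotonic intervals: on each of them both $v_i$ and $v_j$ have exactly one inflection point, giving $k_i+1$ inflection points of $v_i$ and $k_i+1$ inflection points of $v_j$ accounted for in the interior of the intervals. I then need to verify that these exhaust all inflection points of $v_i$ and $v_j$ in $(0,1)$. At an interior critical point $x_0$ of $v_i$, Lemma \ref{lem:nondeg_extrema} provides the strict inequalities $g_i(v_j^2(x_0))\neq\lambda_i$ and $g_j(v_i^2(x_0))\neq\lambda_j$, so $x_0$ is not an inflection point for $v_i$ nor for $v_j$; similarly, Lemmas \ref{lem:opposite_at_endpoints} and \ref{lem:uniq_trivial} (cases 3 and 4) rule out $g_i(v_j^2)=\lambda_i$ or $g_j(v_i^2)=\lambda_j$ at the endpoints $0,1$ for a nontrivial solution. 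Hence every inflection point of either component lies in the interior of one of the monotonic segments of $v_i$, and the count is sharp.

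Finally, by exchanging the roles of $i$ and $j$ and running the same argument on the monotonic intervals of $v_j$, I obtain that $v_i$ has exactly $k_j+1$ inflection points, where $k_j$ is the number of critical points of $v_j$ in $(0,1)$. Equating the two counts of inflection points of $v_i$ yields $k_i+1=k_j+1$, so $k_i=k_j=:k$, and both components have $k$ non-degenerate critical points and $k+1$ isolated inflection points in $(0,1)$, as claimed.

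I expect the only delicate step to be the verification that no inflection point is missed — in particular, that critical points of $v_i$ are never inflection points of $v_j$. This is precisely where the nondegeneracy statement of Lemma \ref{lem:nondeg_extrema} (rather than just the sign information on $v_j''$) is essential; the rest of the argument is a direct bookkeeping on monotonic segments.
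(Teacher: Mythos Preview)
Your argument is correct and follows the same route as the paper's (very terse) proof: count monotonicity intervals of $v_i$, apply Lemma~\ref{lem:1flessoogni2estremi} on each to get exactly one inflection point of both $v_i$ and $v_j$ per interval, then swap $i\leftrightarrow j$ to force $k_i=k_j$. Your explicit verification that no inflection point can sit at a critical point of $v_i$ (via Lemma~\ref{lem:nondeg_extrema} in the interior and Lemmas~\ref{lem:opposite_at_endpoints}/\ref{lem:uniq_trivial} at the endpoints) is exactly the check the paper leaves implicit, so your write-up is in fact cleaner.
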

\begin{proof}
Let $n_i$ denote the number of critical points of $v_i$ in $(0,1)$ (they are well defined by Lemma \ref{lem:isolated_crit}), and $m_i$ the number of
inflection points. By Lemma \ref{lem:1flessoogni2estremi} we have that $n_i + 2 = m_i = m_j$, and the claim follows.
\end{proof}
We are ready to conclude the proof of the main result of this section.
\begin{proof}[Proof of Theorem \ref{thm:intro_1D} (first part)]
First of all let $\mathcal{C}\subset\mathcal{S}$ be a continuum of nontrivial solutions,
and
\[
\mathcal{C} \ni (\nu_n,v_{1,n},v_{2,n}) \to (\bar\nu,\bar v_{1},\bar v_{2}),
\qquad\text{as }n\to+\infty.
\]
Using Proposition \ref{prop:degree}, it is not difficult to prove that, if the number
of interior critical points of each $v_{i,n}$ is constant, equal to $k$, then
\begin{itemize}
 \item either $\bar\nu=0$;
 \item or $\bar\nu>0$ and $(\bar v_1,\bar v_2)$ is the trivial solution;
 \item or $\bar\nu>0$ and each $\bar v_i$ has exactly $k$ interior critical points.
\end{itemize}

Now recall that, being $N=1$ and $\Omega = (0,1)$, the Neumann eigenvalues and eigenfunction
of $-\partial^2_{xx}$ have the form
\[
\mu_k = (k \pi)^2,\quad \psi_k = A \cos (k\pi\,x)\ (A \neq0),\qquad k\in\N,
\]
and every eigenvalue $\mu_k$ is simple. Applying Theorem \ref{thm:intro_bif} we have the
existence, for every $k\geq1$, of continua $\mathcal{C}_k \subset \mathcal{S}$ which consist,
locally near $(\mu_k,1,1)$, of pairs having exactly $k-1$ critical points (by the local
parameterization, because also $\psi_k$ has exactly $k-1$ critical points). The initial
argument tells that each $\mathcal{C}_k$ is characterized by the number of critical points of its
components, so that two of them cannot meet, and each of them is unbounded in the sense of Theorem \ref{thm:intro_bif} (since we are in dimension $N=1\leq3$).

We are only left to prove segregation: this is the object of the next section.
\end{proof}

\section{Segregation in dimension \texorpdfstring{$N=1$}{N=1}}\label{sec:1dsegr}

As we already mentioned (see Remark \ref{rem:notalwayssegregate}), we can not expect that all
arbitrary families of nontrivial
solutions segregate. Nonetheless, restricting our attention to $\mathcal{C}_k$ as
in Theorem \ref{thm:intro_1D}, for some fixed $k$, we can obtain more precise results.

In the following, we focus on ${(\nu_n,v_{1,n},v_{2,n})} \subset \mathcal{C}_k$, a sequence of
solutions of \eqref{mainsys}, with $\nu=\nu_n>0$ and $\lambda_i=\lambda_{i,n}$, whose
components have exactly $k-1$ critical points in $(0,1)$, all non-degenerate.
For easier notation, we will drop the subscript $n$ throughout the proofs,
except when some confusion may arise; in particular, properties of
\[
v_1,v_2,\lambda_1,\lambda_2\qquad\text{ as }\nu \rightarrow 0,
\]
are those of the considered sequence, when $\nu_n\to0$ as $n \rightarrow +\infty$.

As a first step, we want to rule out the possibility that the branch ``collapses'' to the trivial solution as $\nu \rightarrow 0$.

\begin{prop}\label{noncollapse}
Suppose that
\[
v_{1,n}\to 1,\qquad v_{2,n}\to 1,\qquad \nu_n\to\bar \nu,
\]
where the convergence is uniform in $[0,1]$. Then, $\lambda_{i,n} \to g_i(1)$ and
$\bar \nu > 0$.
\end{prop}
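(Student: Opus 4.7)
The first assertion, $\lambda_{i,n}\to g_i(1)$, follows by integrating the equation for $v_{i,n}$ against the constant $1$: the Neumann boundary condition yields
\[
\lambda_{i,n}\int_0^1 v_{i,n}\,dx = \int_0^1 g_i(v_{j,n}^2)\,v_{i,n}\,dx,
\]
and uniform convergence $v_{i,n}\to 1$ makes both sides tend to $g_i(1)$ and $1$ respectively.

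For $\bar\nu>0$, I argue by contradiction, assuming $\nu_n\to 0$. Writing $w_{i,n}:=v_{i,n}-1$, $\eps_n:=\max_i\|w_{i,n}\|_\infty\to 0$ and $\tilde w_{i,n}:=w_{i,n}/\eps_n$, the $L^2$ normalization $\int v_{i,n}^2=1$ forces $\int w_{i,n}=O(\eps_n^2)$. Combining this with the integrated equation and the Taylor expansion $g_i(v_j^2)=g_i(1)+2g_i'(1)w_j+O(w_j^2)$ yields the sharper estimate $\lambda_{i,n}-g_i(1)=O(\eps_n^2)$, and, setting $\alpha_i:=g_i'(1)$, the normalized perturbation solves
\[
-\nu_n\,\tilde w_{i,n}''+2\alpha_i\,\tilde w_{j,n}=o(1)\quad\text{in }L^\infty,
\]
with Neumann boundary conditions, $\|\tilde w_{i,n}\|_\infty\le 1$ and $\max_i\|\tilde w_{i,n}\|_\infty=1$. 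The diagonalization of Lemma \ref{lem:linear_G}, applied to $\phi_{\pm,n}:=\sqrt{\alpha_2}\,\tilde w_{1,n}\pm\sqrt{\alpha_1}\,\tilde w_{2,n}$, uncouples the system: the coercive equation $-\nu_n\phi_{+,n}''+2\saOaT\,\phi_{+,n}=o(1)$ with Neumann BCs gives $\|\phi_{+,n}\|_\infty\to 0$ (test against $\phi_{+,n}$ and use elliptic regularity). Hence $\tilde w_{2,n}+\sqrt{\alpha_2/\alpha_1}\,\tilde w_{1,n}\to 0$ uniformly, and, after relabeling indices and replacing both $\tilde w_{i,n}$ by their negatives along a subsequence if necessary, we find $x_n^*\in[0,1]$ with $\tilde w_{1,n}(x_n^*)=1$.

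The contradiction comes from a blow-up. Set $y:=(x-x_n^*)/\sqrt{\nu_n}$ and $W_{i,n}(y):=\tilde w_{i,n}(x_n^*+\sqrt{\nu_n}\,y)$, defined on $J_n:=[-x_n^*/\sqrt{\nu_n},\,(1-x_n^*)/\sqrt{\nu_n}]$, whose length diverges. In the new variable
\[
W_{i,n}''=2\alpha_i\,W_{j,n}+o(1)\quad\text{in }L^\infty(J_n),
\]
with Neumann conditions at the endpoints of $J_n$, $\|W_{i,n}\|_\infty\le 1$, $W_{1,n}(0)=1$ and $W_{1,n}'(0)=0$ (either because $x_n^*$ is an interior extremum of $\tilde w_{1,n}$, or from Neumann at $x_n^*\in\{0,1\}$). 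The uniform $L^\infty$-bounds on $W_{i,n}''$ and $W_{i,n}$ yield, via Arzel\`a--Ascoli on compact subsets, a subsequence converging in $C^1_\mathrm{loc}$ to a bounded couple $(W_1,W_2)$ on an unbounded interval $I\subseteq\R$ containing $0$. The limit solves $-W_i''+2\alpha_i W_j=0$ on $I$ with $W_1(0)=1$, $W_1'(0)=0$; repeating the diagonalization, $\Phi_+:=\sqrt{\alpha_2}W_1+\sqrt{\alpha_1}W_2$ solves a bounded coercive second-order linear ODE on $I$ (with a Neumann condition at any finite endpoint of $I$, inherited from $J_n$), which forces $\Phi_+\equiv 0$. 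Then $W_1$ satisfies $W_1''+2\saOaT\,W_1=0$, and the two pointwise conditions at $0$ give $W_1(y)=\cos(\sqrt{2\saOaT}\,y)$ on $I$.

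Since $I$ is unbounded, $W_1$ has infinitely many nondegenerate critical points on $I$. On the other hand, $(\nu_n,v_{1,n},v_{2,n})\in\mathcal{C}_k$ implies that each $\tilde w_{1,n}$ has exactly $k-1$ interior critical points on $(0,1)$, so $W_{1,n}$ has the same number in $J_n$. By $C^1_\mathrm{loc}$ convergence, every nondegenerate critical point of $W_1$ in a compact subset of $I$ must be a limit of critical points of $W_{1,n}$: choosing $R$ so that $[-R,R]\cap I$ contains more than $k-1$ critical points of $W_1$ produces the desired contradiction. The delicate step is the blow-up analysis itself: obtaining uniform compactness of $W_{2,n}$ near $0$ (where we have no direct bound on $W_{2,n}'(0)$, only on $W_{1,n}'(0)$) and treating the various geometries of $I$ (the full line, or a half-line with Neumann at a finite endpoint, depending on the asymptotic behavior of $x_n^*/\sqrt{\nu_n}$ and $(1-x_n^*)/\sqrt{\nu_n}$).
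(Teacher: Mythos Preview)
Your argument is essentially the paper's: linearize around the constant solution, show $(\lambda_i-g_i(1))/\eps_n\to 0$, blow up at a point where the normalized perturbation attains its sup, diagonalize to obtain a bounded solution of $W_1''+2\saOaT W_1=0$ on an unbounded interval, and contradict the bound of $k-1$ interior critical points inherited from $\mathcal{C}_k$. Two small remarks. First, your pre--blow-up diagonalization is correct but the justification ``test and use elliptic regularity'' is misleading, since the operator $-\nu_n\partial_x^2+2\saOaT$ degenerates; the clean way to get $\|\phi_{+,n}\|_\infty\to0$ is the maximum principle (at a max point, Neumann or interior, $\phi_{+,n}''\le0$, hence $2\saOaT\max\phi_{+,n}\le o(1)$). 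Second, the gap you flag about compactness of $W_{2,n}$ is harmless: from $\|W_{i,n}\|_{L^\infty(J_n)}\le 1$ and the equation you have $\|W_{i,n}''\|_{L^\infty(J_n)}\le C$, and on any compact $K$ eventually contained in $J_n$ with a fixed $\delta$-neighborhood still inside $J_n$, the elementary interpolation $|W_{i,n}'(y)|\le 2\|W_{i,n}\|_\infty/\delta+\delta\|W_{i,n}''\|_\infty/2$ gives a uniform $C^1$ bound, whence $C^1_{\loc}$ compactness for both components (this is exactly how the paper proceeds). In the half-line case you can, as the paper does, simply reflect evenly across the finite Neumann endpoint to land on $\R$; this at most doubles the number of monotonicity intervals and leaves the contradiction intact.
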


\begin{proof} The proof will be carried out in three steps, and considering the system solved by $u_i := v_i - 1$, which is
\begin{equation}\label{usys}
\left\{ \begin{array}{ll}
-\nu u''_1 = (\lambda_1 - G_1(1 + u_2))(1 + u_1), & \text{in $(0,1)$} \\
-\nu u''_2 = (\lambda_2 - G_2(1 + u_1))(1 + u_2), & \\
u'_1 = u'_2 = 0 \quad \text{at $\{0,1\}$}, \\
\int_0^1 (1+u_1)^2 = \int_0^1 (1+u_2)^2 = 1,
\end{array} \right.
\end{equation}
where we have set $G_i(t) := g_i(t^2)$ for all $t \ge 0$. Note that $u_i \to 0$ uniformly in $[0,1]$.

Without loss of generality, we set ($\bar x = \bar x_n$)
\begin{equation}\label{Mdef}
M := \max_{i = 1,2, \, x \in [0,1]} |u_i(x)| = u_1(\bar x).
\end{equation}

\underline{Step 1.} $|\lambda_i - G_i(1)|/{M} \rightarrow 0$ as $n \rightarrow \infty$. Indeed, note first that $\int_0^1 (1+u_i)^2 = 1$ implies that
\begin{equation}\label{ui_av}
\int_0^1 u_i = -\frac{1}{2}  \int_0^1 u^2_i.
\end{equation}
Moreover, a Taylor expansion in the equations of \eqref{usys} gives
\[
-\nu u''_i = \left(\lambda_i - G_i(1) - G'_i(1)u_j - \frac{G''_i(\xi)}{2}u^2_j\right)(1 + u_i),
\]
where $\xi$ is a bounded function in $(0,1)$ (uniformly with respect to $n$). By integrating the equation and using the boundary conditions we obtain
\[
(\lambda_i - G_i(1)) \int_0^1 (1 + u_i) = G'_i(1)\int_0^1 u_j  + G'_i(1)\int_0^1 u_i \, u_j + \int_0^1\frac{G''_i(\xi)}{2}u^2_j(1 + u_i).
\]
Hence, using \eqref{ui_av},
\[
|\lambda_i - G_i(1)| \int_0^1 (1 + u_i) \le \frac{G'_i(1)}{2} \int_0^1 u^2_j + G'_i(1)\int_0^1 |u_i| \, |u_j| + \int_0^1\frac{G''_i(\xi)}{2}u^2_j(1 + u_i),
\]
which leads to the assertion, as $\int_0^1 (1 + u_i) \to 1$, $|u_i|, |u_j| \le M$ in $[0,1]$ and $M \to 0$. The first conclusion of the {proposition} also follows, as $G_i(1) = g_i(1)$.

\underline{Step 2.} Assume by contradiction that $\bar \nu = 0$. We proceed with a blow-up
analysis, setting
\[
\tilde{u}_i(x) = \frac{1}{M} u_i \left(\sqrt{ \nu } \, x +\bar x\right) \qquad \forall x \in \left(-\frac{\bar x}{\sqrt{\nu}}, \frac{1-\bar x}{\sqrt{\nu}}\right) =: \widetilde{\Omega}_n.
\]
We have that $|\tilde{u}_i| \le 1$ in $\widetilde{\Omega}_n$ and $\tilde{u}_1(0) = 1$. Moreover, $\tilde{u}_i$ solves
\[
- \tilde{u}''_i = \left(\frac{\lambda_i - G_i(1)}{M} - (G'_i(1) + o(1))\tilde{u}_j \right)(1 + u_i) \qquad \text{in $\widetilde{\Omega}_n$}.
\]
Note that (up to subsequences)
\[
\widetilde{\Omega}_n \to \widetilde{\Omega}_\infty:=
\begin{cases}
[\bar X,+\infty) & \text{if }-\frac{\bar x}{\sqrt{\nu}} \to \bar X<+\infty\\
(-\infty,\bar X] & \text{if }\frac{1-\bar x}{\sqrt{\nu}} \to \bar X<+\infty\\
\R & \text{otherwise.}
\end{cases}
\]
Using the equation (twice) and the uniform boundedness of $\tilde{u}_i$ on
$\widetilde{\Omega}_n$, we argue that $\tilde{u}'''_i$ is bounded on compact subsets of $[0,
\infty)$, uniformly as $n \to \infty$. Hence, $\tilde{u}_i \to \widetilde{U}_i \in
C^2(\widetilde{\Omega}_\infty)$ locally in $C^{2,\alpha}$ where $\widetilde{U}_i$ has at most
$k$ intervals of monotonicity and solve, in $\widetilde{\Omega}_\infty$,
\begin{equation}\label{limitsys}
\begin{cases}
\widetilde{U}''_1 = G'_1(1) \widetilde{U}_2 \\
\widetilde{U}''_2 = G'_2(1) \widetilde{U}_1,
\end{cases}
\end{equation}
in view of the conclusion of Step 1. Note that, in case $\widetilde{\Omega}_\infty\neq\R$,
we can use the Neumann conditions in order to extend $\widetilde{U}_i$ by even reflection
around $\bar X$, in such a way that $\widetilde{U}_1, \widetilde{U}_2$ solve \eqref{limitsys} in the whole $\Rset$.

\underline{Step 3.} To reach a contradiction we are going to show that system \eqref{limitsys}
does not admit nontrivial bounded solutions having a finite number of
oscillations (recall that $\widetilde U_1(0)=1$). We can reason as in Section \ref{sec:bifurc}, setting
\[
W_\pm = \sqrt{\alpha_2} \widetilde U_1 \pm \sqrt{\alpha_1} \widetilde U_2,
\]
and obtaining a decoupled system:
\[
\begin{cases}
W_+'' = \saOaT \, W_+, \\
W_-'' = -\saOaT \, W_-. \\
\end{cases}
\]
Therefore, since $W_+$ is bounded it must be constant and identically zero. We deduce
that $\widetilde U_1$, $\widetilde U_2$ are proportional, so that
\[
\widetilde U_1'' = -\saOaT \, \widetilde U_1,
\]
which forces $\widetilde U_1 \equiv 0$ (since it has at most $2k$ monotonicity intervals in
$\R$).
\end{proof}
Next we turn to the case in which $\|v_i\|_{L^\infty}$ is uniformly bounded along the sequence,
for both $i$. To treat such case we need the following Liouville-type result.
\begin{lem}\label{liouville2}
Let $V_i \in C^2(\R)$, $0\leq V_i \leq M$, $\Lambda_i \ge 0$ be such that
\[
-V_i'' = (\Lambda_i - g_i(V_j^2)) V_i \qquad \text{ in }\R.
\]
If both $V_i$ have at most a finite number of monotonicity intervals, then one of the following holds:
\begin{enumerate}
 \item either $V_1 \equiv 0$, $\Lambda_2 = 0$,
 \item or $V_2 \equiv 0$, $\Lambda_1 = 0$,
 \item or $V_1 \equiv V_2 \equiv 0$,
 \item or $g_i(V_j^2) \equiv \Lambda_i$, $i=1,2$.
\end{enumerate}
\end{lem}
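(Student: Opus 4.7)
The plan is to extract asymptotic information from the finite-monotonicity assumption and then split into cases according to whether each $V_i$ vanishes identically. Being bounded and eventually monotone, each $V_i$ admits one-sided limits $V_i^\pm := \lim_{x \to \pm\infty} V_i(x) \in [0, M]$. The right-hand side of the equation is bounded, hence so is $V_i''$; thus $V_i'$ is uniformly Lipschitz, which together with the existence of $V_i^\pm$ forces $V_i'(x) \to 0$ at $\pm\infty$. Passing to the limit in the equation then gives $V_i''(x) \to (g_i((V_j^\pm)^2) - \Lambda_i) V_i^\pm$, and this limit must vanish (otherwise $V_i'$ would grow linearly and contradict $V_i' \to 0$), yielding the compatibility relations $(\Lambda_i - g_i((V_j^\pm)^2)) V_i^\pm = 0$ for $i=1,2$ and both signs.

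Cauchy uniqueness for the linear ODE $-V_i'' + (g_i(V_j^2) - \Lambda_i) V_i = 0$ (which has bounded coefficient) then gives the dichotomy: either $V_i \equiv 0$ or $V_i > 0$ on all of $\R$. If $V_1 \equiv 0$, the assumption $g_2(s) \le C_g s$ yields $g_2(0) = 0$ and reduces the second equation to $V_2'' + \Lambda_2 V_2 = 0$; a bounded non-negative solution with finitely many monotonicity intervals is forced to have $\Lambda_2 = 0$ (otherwise it is a genuine sinusoid with infinitely many extrema) and to be a non-negative constant, which is case~1 (or case~3 if also $V_2\equiv 0$). Case~2 is symmetric, so the remaining scenario is $V_1, V_2 > 0$ on all of $\R$.

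In this positive case I would first rule out vanishing limits at infinity. If $V_1^+ = 0$ while $V_2^+ > 0$, compatibility for the $V_2$-equation at $+\infty$ forces $\Lambda_2 = 0$, so $V_2'' = g_2(V_1^2) V_2 \geq 0$, making $V_2$ bounded and convex on $\R$, hence constant; plugging back yields $g_2(V_1^2) V_2 \equiv 0$ and so $V_1 \equiv 0$, a contradiction. If both $V_1^+ = V_2^+ = 0$, a similar argument shows $\Lambda_1 = 0$ (a positive function decaying to $0$ at $+\infty$ cannot be concave there, ruling out $\Lambda_1>0$), so $V_1$ is convex and hence constant, forcing $V_1 \equiv 0$, again a contradiction. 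So $V_i^\pm > 0$ for all $i$ and $\pm$, and the compatibility relations force $g_i((V_j^\pm)^2) = \Lambda_i$, whence $V_j^+ = V_j^- =: c_j > 0$.

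It remains to show $V_i \equiv c_i$, which is the main obstacle. Setting $W_i := V_i - c_i$ and Taylor-expanding $g_i$ around $c_j^2$ produces
\[
W_i'' = A_i(x)\, W_j, \qquad A_i(x) = g_i'(\eta_j(x))\, (2 c_j + W_j)(c_i + W_i) > 0,
\]
where positivity follows from $V_1, V_2 > 0$ and $g_i' > 0$. If $V_1$ has no interior critical point it is monotone with both limits equal to $c_1$, hence $V_1 \equiv c_1$. Otherwise, let $x_1$ be the leftmost critical point of $V_1$; on $(-\infty, x_1)$ the function $V_1$ is strictly monotone from $c_1$, say strictly increasing, so $W_1 > 0$ there and $V_1(x_1) > c_1$. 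Then $W_2'' = A_2 W_1 > 0$ on $(-\infty, x_1)$, and since $W_2, W_2' \to 0$ at $-\infty$, convexity yields $W_2' \geq 0$ and hence $W_2 \geq 0$ on that interval. The critical-point inequality $V_1''(x_1) \leq 0$ forces $V_2(x_1) \leq c_2$, i.e.\ $W_2(x_1) \leq 0$; combining, $W_2(x_1) = 0$, so $W_2 \equiv 0$ on $(-\infty, x_1]$. Then $W_1'' = A_1 W_2 \equiv 0$ on that interval, so $W_1$ is linear and bounded with $W_1(-\infty) = 0$, forcing $W_1 \equiv 0$ on $(-\infty, x_1)$, a contradiction with $W_1 > 0$. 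The minimum case is symmetric, and the same argument applied to $V_2$ concludes. The reason this is the hard step is that the system is nonlinear and $W_i$ is only globally bounded, so the clean linear $W_\pm$ decoupling used in Proposition~\ref{noncollapse} is not directly available and must be replaced by the above sign-tracking convex/concave argument based on the positivity of the coefficients $A_i(x)$.
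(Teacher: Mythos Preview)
Your proof is correct, but the route differs from the paper's. Both arguments begin by extracting the asymptotic behavior at infinity (existence of limits, vanishing of first and second derivatives there), and both reduce quickly to the case where $V_1,V_2>0$ and both $\Lambda_i>0$. The divergence is in how the final contradiction is reached. The paper works on a single half-line $(a,+\infty)$ on which all of $V_1',V_2',V_1'',V_2''$ have fixed sign, then slides $a$ down to the last inflection point of $V_1$; a short monotonicity/concavity dichotomy forces $V_2$ to be constant on $[a,+\infty)$, contradicting the standing assumption. You instead first pin down the common limits $V_i^{\pm}=c_i$ (ruling out vanishing limits by separate convexity arguments), then introduce the shifted variables $W_i=V_i-c_i$, observe the sign-definite coupling $W_i''=A_i(x)W_j$ with $A_i\ge 0$, and run a convexity propagation from $-\infty$ up to the leftmost critical point of $V_1$. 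Your approach is longer but more systematic and makes the mechanism (positive coupling forcing simultaneous constancy) very explicit; the paper's is shorter and avoids the $W_i$ reformulation at the cost of a slightly more ad-hoc endgame. One small remark: under assumption~\eqref{Gass} the functions $g_i$ are only strictly increasing, so $g_i'$ may vanish at isolated points and your $A_i$ need only satisfy $A_i\ge 0$; this is harmless, since your argument uses convexity of $W_2$ rather than strict convexity.
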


\begin{proof}
First of all, we can reason as in Lemma \ref{lem:uniq_trivial} to show that,
if some $V_i$ is constant in an interval, then $(V_1,V_2)$ is constant in $\R$, and as
a consequence we always fall in one of the above cases. Secondly, assume that
some $\Lambda_i=0$: then $V_i$ is constant, and again the lemma follows
by elementary considerations.

We are left to deal with the case $\Lambda_1,\Lambda_2>0$ and $V_1,V_2$ non constant
and strictly positive. Since
both $V_i$ have a finite number of monotonicity intervals, the equations imply that they
also have a finite number of inflection points (and they have at least one, since they are bounded in $\R$). We deduce the existence of $a\in\R$ such that, say,
\[
V_1',V_2',V_1'',V_2''\text{ do not change sign in }(a,+\infty).
\]
In particular, the limits $V_i(+\infty)$ exist and $V_i'(+\infty)=V_i''(+\infty)=0$.

Assume that $V_i'\geq0$ for $x>a$, so that $V_i''\leq0$ in the same interval.
Then $V_i(+\infty)>0$, and
\[
g_i(V_j^2 (x) ) \leq \Lambda_i\text{ in }(a,+\infty),\qquad g_i(V_j^2 (+\infty) ) =
\Lambda_i  - \frac{V_i''(+\infty)}{V_i(+\infty)} = \Lambda_i,
\]
so that also $V'_j$ is non negative.

Now we can lower $a$ in such a way that, say,
\[
V_1''(a)=0.
\]

If $V_1'\leq0$ for $x>a$, we deduce that also $V_2'\leq0$ in the same interval. But then $\Lambda_1 - g_1(V_2^2)$ is increasing, and $V_1''\leq0$ for $x>a$,
a contradiction since $V_1$ is decreasing and bounded.

On the other hand, let $V_1'\geq 0$ for $x>a$. Then $V_1(+\infty)>0$, and
\[
g_1(V_2^2 (a) ) = \Lambda_1 = g_1(V_2^2 (+\infty) ) - \frac{V_1''(+\infty)}{V_1(+\infty)}
= g_1(V_2^2 (+\infty) ),
\]
forcing $V_2$ to be constant in $[a,+\infty)$, again a contradiction.
\end{proof}
Using the previous result, we can show that uniform $L^\infty$ bounds imply segregation.
\begin{lem}\label{l1l2zero} Assume that $\|v_{i,n}\|_\infty \le C$ for both
$i$. Then, up to subsequences,
\[
\lambda_{i,n} \rightarrow 0,  \quad i=1,2,
\]
as $\nu_n \rightarrow 0$.
\end{lem}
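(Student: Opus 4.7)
The plan is to argue by contradiction: using $\lambda_{i,n} \le C_g$ from Lemma \ref{lem:prel_estimates}, extract a subsequence along which $\lambda_{i,n} \to \bar\lambda_i \in [0, C_g]$, and derive a contradiction from the assumption $\bar\lambda_1 > 0$ (the case $\bar\lambda_2 > 0$ being symmetric).

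The core of the argument is a blow-up at a global maximum point $\bar x_n \in [0,1]$ of $v_{1,n}$; since $|\Omega|=1$ and $\int_0^1 v_{1,n}^2 = 1$, we have $v_{1,n}(\bar x_n) \ge 1$. Setting $V_{i,n}(x) := v_{i,n}(\sqrt{\nu_n}\,x + \bar x_n)$ on the rescaled domain $\widetilde\Omega_n$, and proceeding exactly as in Step 2 of the proof of Proposition \ref{noncollapse} (using the uniform $L^\infty$ bound on $v_{i,n}$ to bootstrap to $C^2_{\mathrm{loc}}$ convergence, and reflecting evenly across any finite endpoint of $\widetilde\Omega_\infty$ to extend to $\R$), one obtains a bounded classical limit $(V_1, V_2)$ of
\[
-V_i'' + g_i(V_j^2) V_i = \bar\lambda_i V_i \quad \text{in } \R,
\]
with only finitely many monotonicity intervals (inherited from the uniform bound on the critical points of $v_{i,n}$) and satisfying $V_1(0) \ge 1$. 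Lemma \ref{liouville2} then forces alternative (4): alternatives (1), (3) are excluded by $V_1(0) \ge 1$ and alternative (2) by $\bar\lambda_1 > 0$, so $V_1 \equiv c_1$, $V_2 \equiv c_2$ are positive constants with $g_i(c_j^2) = \bar\lambda_i$. If $\bar\lambda_2 = 0$, this already yields a contradiction, since $g_2(c_1^2) = 0$ together with $g_2(s) \ge C_g^{-1} s$ forces $c_1 = 0$, against $c_1 \ge 1$.

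It remains to rule out the case $\bar\lambda_1, \bar\lambda_2 > 0$, which is the main obstacle: alternative (4) gives no local contradiction at the maximum and must be excluded by a global mechanism. The idea is to repeat the blow-up at an arbitrary sequence of centers $\bar y_n \to \bar y \in [0,1]$: since $\bar\lambda_1, \bar\lambda_2 > 0$, alternatives (1)--(3) of Lemma \ref{liouville2} are all excluded regardless of the center, so case (4) again pins the limit to the same pair of positive constants $(c_1, c_2)$ (uniquely determined by $c_j^2 = g_i^{-1}(\bar\lambda_i)$, by strict monotonicity of $g_i$). A standard subsequence-of-subsequences argument then promotes the pointwise conclusion $v_{i,n}(\bar y_n) \to c_i$ to uniform convergence $v_{i,n} \to c_i$ on $[0,1]$: if it failed, one could extract $\bar y_n$ with $|v_{i,n}(\bar y_n) - c_i| \ge \varepsilon$, contradicting the blow-up result along any further subsequence. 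The normalization $\int v_{i,n}^2 = 1$ then forces $c_i = 1$, so $v_{i,n} \to 1$ uniformly, and Proposition \ref{noncollapse} yields $\bar\nu > 0$, contradicting $\nu_n \to 0$.
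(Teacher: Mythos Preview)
Your overall architecture --- blow up, invoke Lemma~\ref{liouville2}, and close with Proposition~\ref{noncollapse} --- is exactly the paper's, but the step where you blow up at an \emph{arbitrary} center $\bar y_n$ has a gap. Alternative~(3) of Lemma~\ref{liouville2} is just $V_1\equiv V_2\equiv 0$, with \emph{no} constraint on the $\Lambda_i$; so ``$\bar\lambda_1,\bar\lambda_2>0$'' does not exclude it. If for some sequence $\bar y_n$ one had $v_{1,n}(\bar y_n)\to0$ and $v_{2,n}(\bar y_n)\to0$, your subsequence-of-subsequences argument would simply land in case~(3) and not close.

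The gap is fixable. One route is a Sturm comparison: if the blow-up limit were $(0,0)$, then for large $n$ one would have $g_1(\tilde v_{2,n}^2)\le \bar\lambda_1/2$ on any fixed window $[-R,R]$, whence $\tilde v_{1,n}''+q_n\tilde v_{1,n}=0$ with $q_n\ge \bar\lambda_1/2$; comparison with $\sin(\sqrt{\bar\lambda_1/2}\,\cdot)$ forces a zero of $\tilde v_{1,n}$ once $2R>\pi\sqrt{2/\bar\lambda_1}$, contradicting positivity. The paper avoids this altogether by a more economical choice of centers: after the blow-up at $\bar x_n=\arg\max v_{1,n}$, it blows up a second time at $x_{2,n}=\arg\min v_{1,n}$, where Lemma~\ref{lem:nondeg_extrema} gives $g_1(v_{2,n}^2(x_{2,n}))>\lambda_{1,n}$ and hence $\liminf v_{2,n}(x_{2,n})\ge\sqrt{g_1^{-1}(\bar\lambda_1)}>0$, so~(3) is ruled out for free. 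Case~(4) at both centers then yields $g_2(\max v_{1,n}^2)\to\bar\lambda_2$ and $g_2(\min v_{1,n}^2)\to\bar\lambda_2$, so $\max v_{1,n}-\min v_{1,n}\to0$ and $v_{1,n}\to1$ uniformly without any arbitrary-center machinery; swapping the roles of $v_1,v_2$ (legitimate once $\bar\lambda_2>0$ is established) gives $v_{2,n}\to1$ and the contradiction with Proposition~\ref{noncollapse}.
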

\begin{proof} Let us assume by contradiction that, for instance, $\lambda_1\not\to0$. We choose a sequence
$(x_{1,n})_n\subset[0,1]$ such that (omitting the subscript $n$)
\[
v_1(x_1):= \max_{[0,1]} v_1\geq1,\qquad \text{and put }\tilde{v}_i(x) := v_i\left(x_1 + x\sqrt{\nu}\right).
\]
Then, $\tilde{v}_i$ solves
\[
-\tilde{v}''_i(x) = \left(\lambda_i - {g_i(\tilde{v}_j^2(x))}\right) \tilde{v}_i(x)
\quad \text{in $(-x_1\nu^{-1/2},(1-x_1)\nu^{-1/2})$},
\]
$\|\tilde{v}_i\|_\infty \le C$, $\tilde{v}_i \ge 0$. The equations and \eqref{identity1} guarantee local $C^3$ boundedness of $\tilde{v}_i$, thus, up to subsequences,
$\tilde{v}_i \rightarrow V_i$ locally in $C^2$. Moreover, $\lambda_i \rightarrow \Lambda_i \ge 0$. We argue that, possibly up to an even extension, each $V_i$ has at most
$2k$ intervals of monotonicity and
\[
-V_i'' = (\Lambda_i - g_i(V_j^2)) V_i  \quad \text{in
$\R$}.
\]
Then, Lemma \ref{liouville2} applies, but since $V_1(0) \ge 1$ and
$\Lambda_1 > 0$, we deduce that
\begin{equation}\label{cases1}
g_i(v_j^2(x_1)) - \lambda_i \rightarrow 0\text{ for both $i$},
\end{equation}
and also $\lambda_2 \not\to 0$ (as $g_2(v_1^2(x_1))\geq g_2(1)>0$).
We can implement the same argument using
\[
v_1(x_2):= \min_{[0,1]}v_1\leq1,\qquad \text{and }\tilde{w}_i(x) := w_i\left(x_2 + x\sqrt{\nu}\right).
\]
Passing to the limit (we keep the same sequence $\lambda_i \rightarrow \Lambda_i>0$ as before),
and recalling Lemma \ref{lem:nondeg_extrema}, we have that $W_2(0)>0$ and then
\begin{equation}\label{cases2}
g_i(v_j^2(x_2)) - \lambda_i \rightarrow 0\text{ for both $i$}.
\end{equation}
Combining \eqref{cases1} and \eqref{cases2}, we deduce that $v_1\to 1$ uniformly on
$[0,1]$.

Now, since also $\Lambda_2>0$, we can exchange the role of $v_1$ and $v_2$, obtaining that $v_2\to 1$ too, in contradiction with Proposition \ref{noncollapse}.
\end{proof}
We are left to deal with the case of $\max_{[0,1]} (v_{1,n}+v_{2,n}) \rightarrow +\infty$, namely when $v_1$ or $v_2$ are not bounded uniformly  in $n$. To treat
this situation we need to exploit the finite number of maxima of each component along
$\mathcal{C}_k$, as enlighten in the following lemma (for convenience we write explicitely
the dependence on $n$).
\begin{lemma}\label{lem:intervallino}
Let $\max_{[0,1]} (v_{1,n}+v_{2,n}) \rightarrow +\infty$. There exist and index $i$,
constants $C,\delta>0$ (independent of $n$), and a sequence of intervals $I_n\subset[0,1]$ such that, up to subsequences:
\[
\begin{split}
& |I_n| = \delta\\
& \max_{I_n} v_{i,n} = \max_{\partial I_n} v_{i,n} \to +\infty\\
& \max_{I_n} v_{j,n} \leq C.
\end{split}
\]
\end{lemma}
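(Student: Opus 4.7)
The plan is to exploit the uniform bound on the number of critical points (each $v_{i,n}$ has exactly $k-1$ interior ones by Proposition~\ref{prop:degree}) to localize the blow-up at a single critical point, and then to use the $L^2$-normalization $\int v_{i,n}^2 = 1$ to control the companion component on a fixed-length interval having the spike on its boundary.

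Passing to a subsequence, all critical points $y_{i,\ell}^n$ converge in $[0,1]$ and all critical values $v_i(y_{i,\ell}^n)$ converge in $[0, +\infty]$; let $0 = w_0 < w_1 < \cdots < w_m = 1$ denote the distinct limits of critical points, and let $\mathcal{S}$ be the set of those $w_p$ at which $v_i(y_{i,\ell}^n) \to +\infty$ for some $(i,\ell)$ with $y_{i,\ell}^n \to w_p$. Lemma~\ref{lem:nondeg_extrema} ensures that spikes can only occur at local maxima (local minima being bounded by $\sqrt{g_j^{-1}(C_g)}$), and by hypothesis $\mathcal{S} \neq \emptyset$. After the reflection $x \mapsto 1-x$ if necessary, I may assume $w^{**} := \max \mathcal{S} < 1$, and denote by $w^{***} > w^{**}$ the next cluster point. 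Among all spikes (of either $v_1$ or $v_2$) in the cluster at $w^{**}$, let $\bar z_n$ denote the one of largest $x$-coordinate (stabilized along the subsequence); up to renaming $v_1 \leftrightarrow v_2$, $\bar z_n$ is a local maximum of $v_1$ with $v_1(\bar z_n) \to \infty$, and Lemma~\ref{lem:nondeg_extrema} yields $v_2(\bar z_n) \leq \sqrt{g_1^{-1}(C_g)}$. I then set $I_n := [\bar z_n, \bar z_n + \delta]$ with $\delta := (w^{***} - w^{**})/4$.

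For the verification, note that for $n$ large the only critical points of $v_1$ or $v_2$ meeting $I_n$ are those of the cluster at $w^{**}$ sitting at or to the right of $\bar z_n$; by the maximal choice of $\bar z_n$, none of these (except $\bar z_n$ itself) is a spike, so their $v_1$- and $v_2$-values are bounded by a constant $M_0$ depending only on $C_g$. Past the cluster, both $v_1$ and $v_2$ are monotone on a piece of $(w^{**}, w^{***})$ of length at least $\delta$, and the inequality $\int_{\bar z_n + \delta}^{\bar z_n + 3\delta/2} v_i^2 \leq 1$ together with monotonicity forces $v_i(\bar z_n + \delta) \leq \sqrt{2/\delta}$ (or, in the decreasing case, $v_i(\bar z_n + \delta) \leq v_i(\check z^n) \leq M_0$, where $\check z^n \to w^{**}$ is the cluster's rightmost element). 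Combining these, $v_2 \leq C$ on all of $I_n$ for a uniform constant $C$, while $v_1$ on $I_n \setminus \{\bar z_n\}$ is bounded by a fixed constant eventually strictly less than $v_1(\bar z_n) \to \infty$; hence $\max_{I_n} v_1 = \max_{\partial I_n} v_1 = v_1(\bar z_n)$. The crucial step I expect to be the hardest is the \emph{double-maximal} selection of $\bar z_n$ as the rightmost spike of \emph{either} component: without it, a $v_2$-spike could occur inside $I_n$ and wreck the bound $v_2 \leq C$, and a second $v_1$-spike to the right of $\bar z_n$ could prevent $v_1(\bar z_n)$ from dominating $v_1$ on $I_n$.
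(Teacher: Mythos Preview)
Your argument follows the same line as the paper's: pass to a subsequence so that the (finitely many) spike locations stabilize, select one spike and place a fixed-length interval on one side of it where no other spike can land, bound the companion component at the spike endpoint via Lemma~\ref{lem:nondeg_extrema}, and observe that interior local maxima (being non-spikes) stay bounded. The paper splits into three cases (a gap between two consecutive spike limits, or all spikes accumulating at a single point away from one of the endpoints), whereas you handle everything at once by working to the right of the rightmost spike in a chosen cluster; your explicit use of the $L^2$-constraint $\int v_i^2=1$ to control $v_i(\bar z_n+\delta)$ in the increasing case is a detail the paper leaves implicit when it says the bound ``descends again by Lemma~\ref{lem:nondeg_extrema}''.

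There is one small gap in your write-up: the reflection $x\mapsto 1-x$ does not guarantee $\max\mathcal S<1$ when $\{0,1\}\subset\mathcal S$ (which can certainly happen, e.g.\ if $v_1$ spikes at $0$ and $v_2$ at $1$), since then $\max(1-\mathcal S)=1-\min\mathcal S=1$ as well. The repair is immediate once you notice that you never actually use $w^{**}=\max\mathcal S$; you only need $w^{**}\in\mathcal S$ with $w^{**}<1$, so that a next cluster point $w^{***}$ exists and $\bar z_n$ can be chosen as the rightmost spike \emph{within that cluster}. Hence simply pick any $w^{**}\in\mathcal S$ with $w^{**}<1$, reflecting only in the sole remaining case $\mathcal S=\{1\}$. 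With this adjustment your proof is complete and coincides in substance with the paper's.
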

\begin{proof}
Let
\[
Z_n := \left\{ z\in[0,1] : z\text{ is a local maximum for }v_{i,n}\text{, for some }i,\text{ and }
v_{i,n}(z) \to +\infty\right\}.
\]
Since we are considering elements of $\mathcal{C}_k$, we have that
\[
Z_n = \{z_{l,n}\}_{l=1,\dots,h},\qquad\text z_{1,n}<\dots<z_{h,n},\qquad h\leq k+1
\]
(recall that, by Lemma \ref{lem:nondeg_extrema}, if $z$ is a local maximum for $v_i$ then $g_i(v_j^2(z))\leq \lambda_i$). Up to subsequences, we can assume that each $z_{l,n}$ is
a maximum for some $v_{i,n}$, with $i$ independent of $n$; furthermore we can assume that,
for each $l$, $z_{l,n}\to z_l\in[0,1]$. We distinguish three cases.

\underline{Case 1}: for some $l$, $z_{l}<z_{l+1}$.  We choose $i$ so that $z_{l,n}$ is
a local maximum for $v_{i,n}$ and
\[
2\delta = z_{l+1} - z_{l},\qquad I_n=[z_{l,n},z_{l,n}+\delta].
\]
By construction, neither $v_{i,n}$ nor $v_{j,n}$ can have interior maxima which go to infinity;
therefore the required properties for $\max_{I_n} v_{i,n}$ follow from the fact that
$v_{i,n}(z_{l,n})\to+\infty$, while those for $\max_{I_n} v_{j,n}$ descend again by Lemma
\ref{lem:nondeg_extrema}.

\underline{Case 2}: $z_{1}=\dots=z_{h} \neq 1$. One can reason as above, by choosing $i$ so
that $z_{h,n}$ is a local maximum for $v_{i,n}$ and
\[
2\delta = 1 - z_{h},\qquad I_n=[z_{h,n},z_{h,n}+\delta].
\]

\underline{Case 3}: $z_{1}=\dots=z_{h} \neq 0$. We can choose $i$ so that $z_{1,n}$ is a
local maximum for $v_{i,n}$ and
\[
2\delta = z_{1},\qquad I_n=[z_{1,n}-\delta,z_{1,n}].      \qedhere
\]
\end{proof}
The last tool we need is the following standard comparison lemma.
%
%
\begin{lem}[{\cite[Lemma 4.4]{MR2146353}}]\label{comparison} Suppose that $u \in C^2(a,b) \cap C([a,b])$ satisfies
\[
- u''(x) \le - M u(x), \quad 0 \leq u(x) \leq A,\qquad
\text{in $(a,b)$}
\]
for some $A,M > 0$. Then, for every $0<\delta<(b-a)/2$,
\[
 u(x) \le 2A\,e^{-\delta \sqrt{M}} \qquad
 \text{in $[a+\delta,b-\delta]$}.
\]
\end{lem}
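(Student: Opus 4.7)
The plan is a classical barrier/comparison argument. The differential inequality $-u'' \le -Mu$ can be rewritten as $u'' \ge Mu$, and the idea is to dominate $u$ pointwise by an explicit supersolution of the associated equation $w'' = Mw$ which is everywhere above $A$ on the boundary but decays exponentially away from it.

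First I would introduce the barrier
\[
w(x) := A\bigl(e^{-\sqrt{M}(x-a)} + e^{-\sqrt{M}(b-x)}\bigr), \qquad x\in[a,b].
\]
Each summand is a solution of the ODE $y''=My$, so $w''=Mw$ on $(a,b)$. Moreover $w(a)=A(1+e^{-\sqrt{M}(b-a)})\ge A\ge u(a)$ and similarly $w(b)\ge u(b)$, so $w$ dominates $u$ on the boundary.

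Next I would compare $u$ with $w$ via the maximum principle applied to $z:=u-w$. The hypothesis on $u$ and the equation for $w$ yield $z'' \ge Mz$ in $(a,b)$, while $z\le 0$ at $a,b$. If $z$ attained a strictly positive interior maximum at some $x_0\in(a,b)$, then $z''(x_0)\le 0$ but $Mz(x_0)>0$, a contradiction. Hence $z\le 0$ throughout, i.e.\ $u\le w$ on $[a,b]$.

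Finally, for $x\in[a+\delta,b-\delta]$ one has $x-a\ge\delta$ and $b-x\ge\delta$, so both exponentials in the definition of $w$ are bounded by $e^{-\delta\sqrt{M}}$, giving $u(x)\le w(x)\le 2Ae^{-\delta\sqrt{M}}$, which is the claim. There is no serious obstacle: the only subtlety is choosing a barrier that is simultaneously a supersolution, bounds $u$ on the boundary, and has transparent exponential decay in the interior — the symmetric sum of the two fundamental solutions does all three at once.
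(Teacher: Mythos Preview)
Your argument is correct and follows essentially the same approach as the paper: comparison with a solution of $w''=Mw$ that dominates $u$ at the endpoints. The paper simply indicates ``by comparison with the solution of $-w''=-Mw$ in $(a,b)$, $w(a)=w(b)=A$'' (i.e.\ the $\cosh$ profile), whereas you use the sum of the two decaying exponentials; both barriers yield the same $2A\,e^{-\delta\sqrt{M}}$ bound and the maximum-principle step is identical.
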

\begin{proof} By comparison with
the solution of $-w'' = -Mw$ in $(a,b)$, $w(a)=w(b)=A$.
\end{proof}
\begin{remark}\label{rem:comparison on half interval}
By even reflection, we have that if $u$ is as in Lemma \ref{comparison} and furthermore $u'(a)=0$, then the estimate holds on
any $[a,b']\subset[a,b)$, choosing $\delta=b-b'$.
\end{remark}
We are in a position to prove that segregation occurs
also when some $v_i$ is unbounded, thus completing the proof
of Theorem \ref{thm:intro_1D}.
\begin{lemma}\label{lgoestozero} Let $\max_{[0,1]} (v_{1,n}+v_{2,n}) \rightarrow +\infty$.
Then (up to subs.) $\lambda_{i,n} \rightarrow 0$, for some $i$ (and
the corresponding $v_{i,n}$ is not uniformly bounded).
\end{lemma}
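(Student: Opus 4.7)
The plan is to argue by contradiction and blow up at a peak of the unbounded component. Say $v_{1,n}$ is unbounded along the sequence (the case $v_{2,n}$ unbounded is symmetric), and suppose, along a subsequence, $\lambda_{1,n} \to \Lambda_1 > 0$. By Lemma \ref{lem:intervallino} there exists an interval $I_n = [a_n, a_n + \delta]$ of fixed length $\delta>0$ (possibly after a reflection) on which $v_{2,n} \leq C$ and the maximum of $v_{1,n}$ is attained at the endpoint $a_n$ with $M_n := v_{1,n}(a_n) \to +\infty$. Inspecting the three cases of the proof of that lemma, $a_n$ is either an interior local maximum of $v_{1,n}$ or a Neumann endpoint of $[0,1]$, so in any case $v_{1,n}'(a_n) = 0$.

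Set $y = (x-a_n)/\sqrt{\nu_n}$ and define
\[
V_1^n(y) := v_{1,n}(a_n + \sqrt{\nu_n}\, y)/M_n, \qquad V_2^n(y) := v_{2,n}(a_n + \sqrt{\nu_n}\, y),
\]
for $y \in [0, \delta/\sqrt{\nu_n}]$. Then $0\leq V_1^n \leq 1$, $V_1^n(0)=1$, $(V_1^n)'(0)=0$, $0 \leq V_2^n \leq C$, and
\[
-(V_1^n)'' = \bigl(\lambda_{1,n} - g_1((V_2^n)^2)\bigr) V_1^n.
\]
By \eqref{Gass} and \eqref{identity1} the right-hand side is uniformly bounded, hence $\{V_1^n\}$ is equibounded in $C^{1,1}_\mathrm{loc}$, and up to a subsequence converges in $C^1_\mathrm{loc}([0,\infty))$ to some $V_1 \in C^{1,1}$ with $V_1 \geq 0$, $V_1(0) = 1$, $V_1'(0) = 0$.

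The crucial step is to show $V_2^n \to 0$ uniformly on compact subsets of $\{V_1 > 0\}$. On any such compact $K$, eventually $V_1^n \geq V_1/2 \geq c > 0$ on $K$, so $v_{1,n}^2 \geq c M_n^2/4$, and by \eqref{Gass} the coefficient $g_2(v_{1,n}^2) - \lambda_{2,n}$ in the equation for $V_2^n$ is bounded below by $c' M_n^2$ on $K$ for $n$ large. Lemma \ref{comparison} then yields $V_2^n \leq 2C\, e^{-\varepsilon \sqrt{c'}\, M_n}$ on a slightly smaller compact, so $V_2^n \to 0$ and consequently $g_1((V_2^n)^2) \to 0$ on compacts of $\{V_1 > 0\}$. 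Passing to the limit in the equation for $V_1^n$ gives $-V_1'' = \Lambda_1 V_1$ on the maximal connected component $[0,\sigma)$ of $\{V_1 > 0\}$ containing $0$, so the initial data force $V_1(y) = \cos(\sqrt{\Lambda_1}\, y)$ and $\sigma = \pi/(2\sqrt{\Lambda_1}) < +\infty$.

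To conclude, I would use $C^1$-continuity of $V_1$ at $\sigma$: one has $V_1(\sigma) = 0$ and $V_1'(\sigma) = -\sqrt{\Lambda_1} < 0$; but $V_1 \geq 0$ together with $V_1(\sigma) = 0$ and $V_1 \in C^1$ force $V_1'(\sigma) \geq 0$, a contradiction. Hence $\Lambda_1 = 0$, i.e.\ $\lambda_{1,n} \to 0$. The main technical obstacle is the passage to the limit in the stiff nonlinearity $g_1((V_2^n)^2)$: this is achieved by combining the macroscopic $L^\infty$-bound on $v_{2,n}$ from Lemma \ref{lem:intervallino} (so that $V_2^n$ remains uniformly bounded on the rescaled interval) with the exponential decay estimate of Lemma \ref{comparison}, whose application is possible exactly because $v_{1,n}^2$ blows up at rate $M_n^2$ throughout $\{V_1 > 0\}$.
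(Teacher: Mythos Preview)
Your proposal is correct and follows essentially the same approach as the paper: invoke Lemma~\ref{lem:intervallino} to obtain an interval where one component blows up at an endpoint while the other stays bounded, rescale by $\sqrt{\nu_n}$ and normalize the unbounded component, use Lemma~\ref{comparison} to force the bounded component to vanish on $\{V_1>0\}$, and then derive a contradiction from the limit equation $-V_1''=\Lambda_1 V_1$ via the $C^1$ regularity of $V_1$ at the boundary of its positivity set. The only (minor) presentational difference is that you exploit $V_1'(0)=0$ explicitly to obtain the exact formula $V_1(y)=\cos(\sqrt{\Lambda_1}\,y)$, whereas the paper argues more abstractly via concavity of $V$; both lead to the same contradiction $V_1'(\sigma)<0$ at an interior minimum of a nonnegative $C^1$ function.
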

\begin{proof}
Let $i$, $I_n=:[z_n,z_n+\delta]$ be as in Lemma \ref{lem:intervallino}. We
can assume, w.l.o.g.,
\[
\max_{I_n} v_{i,n} = v_{i,n}(z_n)\to+\infty.
\]
We define the blow-up sequences
\[
\begin{split}
\tilde{v}_{i,n}(x) &:= \frac{1}{v_{i,n}(z_n)} v_{i,n}(z_n + x\sqrt{\nu_n})\\
\tilde{v}_{j,n}(x) &:=  v_{j,n}(z_n + x\sqrt{\nu_n}).
\end{split}
\]
Then, $\tilde{v}_{i,n}=\tilde{v}_i$ solves
\[
- \tilde{v}''_i = (\lambda_i - g_i(\tilde v_j^2)) \tilde{v}_i
\]
in $(0, \delta\nu^{-1/2})$, $0 \le \tilde{v}_i \le 1$ and $\tilde{v}_i(0) = 1$. Also $\lambda_i - g_i(\tilde v_j^2)$ is uniformly bounded in $[0,
\delta\nu^{-1/2}]$, by Lemmas \ref{lem:prel_estimates} and
\ref{lem:intervallino}.
Since both $\tilde v_i$ and $\tilde v_i''$ are uniformly bounded on compact
sets, we deduce that also $\tilde v_i'$ is bounded, and there exists
$V\in C^1([0,+\infty))$ such that $v_i\to V$ in $C^1([a,b])$,
for every $[a,b]\subset [0,+\infty)$.

We claim that, if $V>0$ in $[a',b']\subset(0,+\infty)$, then
$\tilde v_j \rightarrow 0$ uniformly in $[a',b']$. Indeed, let $(a,b)
\supset [a',b']$ be such that $V\geq \eta>0$ in $(a,b)$. We deduce that,
in such interval,
\[
-\tilde v_j'' = (\lambda_j - g_j(v^2_{i}(z)\tilde v_i^2)) \tilde{v}_j\leq
\left(\lambda_j - C_g^{-1} v^2_{i}(z) \frac12 V_i^2\right)\tilde{v}_j
\leq - C^2  v^2_{i}(z) \tilde{v}_j,
\]
where $C>0$ depends on $\eta$ and $C_g$. Lemma \ref{comparison} applies, yielding
\[
0\leq \tilde{v}_j \leq C_1 e^{-C_2 v_i(z)}\to0 \qquad \text{in }[a',b'],
\]
as $C_2>0$ and $v_i(z)\to+\infty$.

Now, let $\lambda_{i}\to\Lambda\geq0$. We can pass to the limit in the
equation of $\tilde v_i$, deducing that
\[
\begin{cases}
 V\in C^1([0,+\infty)),\qquad 0\leq V\leq 1,\\
 V>0 \implies -V''=\Lambda V\\
 V(0)=1.
\end{cases}
\]
Let $[0,a)$, $a\leq+\infty$, be the maximal interval containing $0$ in which
$V>0$. If $a<+\infty$, by convexity we obtain that $V(a)=0$ and $V'(a)<0$,
a contradiction since $V(x)$ must be non negative also for $x>a$. Therefore $a=+\infty$ and $V$ is a bounded, concave function
on $\R^+$, i.e. $V\equiv 1$ and $\Lambda=0$.
\end{proof}

\begin{proof}[End of the proof of Theorem \ref{thm:intro_1D}]
Taking into account Corollary \ref{coro:suff_cond_segr}, the last part of
the theorem follows from Lemmas \ref{l1l2zero} and \ref{lgoestozero}.
\end{proof}

\section{Further properties of the first branch}\label{sec:firstbranch}

To conclude, we complete the analysis started in Section \ref{sec:1dsegr}
by restricting our attention to the first bifurcation branch
$\mathcal{C}_1$. Since $k=1$, such branch consists
of monotone solutions,
and for concreteness we assume that the sequence we are considering is such
that $v_{1,n}$ is decreasing and $v_{2,n}$ is increasing (and $\nu_n
\rightarrow 0$ as $n \rightarrow \infty$). As before, we will omit the
subscript $n$, when no confusion arises. We denote by $\xi_{1,n}, \xi_{2,n}
\in (0,1)$ the unique inflection points of the considered pair:
\[
- v'_{1,n}(\xi_{1,n}) = \max_{[0,1]} |v'_{1,n}(x)|, \quad
v'_{2,n}(\xi_{2,n}) = \max_{[0,1]} |v'_{2,n}(x)|.
\]
A number of (rather elementary) a priori estimates can be deduced from the
monotonicity of the components. We collect them in the following three
lemmas.
\begin{lem} The following inequalities hold
\begin{align}
&v_1^2(x) \le \frac{1}{x} \quad \forall x > 0, \quad  \quad v_2^2(x) \le \frac{1}{1-x} \quad \forall x < 1, \label{bound12abv}\\
&\xi_1[v_1^2(0) + v_1(0)v_1(\xi_1) + v_1^2(\xi_1)] \le 3, \label{x1ineq} \\
&(1-\xi_2)[v_2^2(1) + v_2(1)v_2(\xi_2) + v_2^2(\xi_2)]  \le 3, \label{x2ineq} \\
&|v_1'(x)| (x-x_0)  \le x_0^{-1/2}  \qquad \forall x_0  \ge \xi_1, x \in [x_0, 1], \label{v1pineq} \\
&|v_2'(x)| (x_0-x)  \le (1-x_0)^{-1/2}  \qquad \forall x_0  \le \xi_2, x \in [0, x_0]. \label{v2pineq}
\end{align}
\end{lem}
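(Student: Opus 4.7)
The plan is to leverage the precise shape information for $\mathcal{C}_1$-solutions established in Section \ref{sec:1d}: each $v_i$ is monotone on $[0,1]$, has a unique interior inflection point $\xi_i$, and is concave on the subinterval adjacent to its maximum and convex on the subinterval adjacent to its minimum. Concretely, since $v_1$ is decreasing with $v_1'(0)=v_1'(1)=0$, $v_1'$ attains its minimum at $\xi_1$, so $v_1''\le 0$ on $[0,\xi_1]$ and $v_1''\ge 0$ on $[\xi_1,1]$; the symmetric statement holds for $v_2$.

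First, I would prove \eqref{bound12abv} using only monotonicity: since $v_1^2$ is non-increasing,
\[
1 = \int_0^1 v_1^2 \ge \int_0^x v_1^2(s)\,ds \ge x\,v_1^2(x),
\]
and analogously $(1-x)v_2^2(x)\le 1$.

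Second, for \eqref{x1ineq}--\eqref{x2ineq}, I would use the concavity of $v_1$ on $[0,\xi_1]$ (resp.\ of $v_2$ on $[\xi_2,1]$). Since a concave function lies above any of its chords, setting $a=v_1(0)$ and $b=v_1(\xi_1)$ gives
\[
1 \ge \int_0^{\xi_1} v_1^2(x)\,dx \ge \int_0^{\xi_1}\left(a + (b-a)\tfrac{x}{\xi_1}\right)^2 dx = \frac{\xi_1}{3}\left(a^2+ab+b^2\right),
\]
which is \eqref{x1ineq}; the proof of \eqref{x2ineq} is identical, using the chord of $v_2$ on $[\xi_2,1]$.

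Third, for \eqref{v1pineq}--\eqref{v2pineq}, I would exploit the convexity of $v_1$ on $[\xi_1,1]$: there $v_1'$ is non-decreasing, and since $v_1'(1)=0$ and $v_1'\le 0$, $|v_1'|$ is non-increasing on $[\xi_1,1]$. Hence for $x_0\ge \xi_1$ and $x\in[x_0,1]$,
\[
v_1(x_0) - v_1(x) = \int_{x_0}^x |v_1'(t)|\,dt \ge (x-x_0)\,|v_1'(x)|,
\]
so $|v_1'(x)|(x-x_0)\le v_1(x_0)\le x_0^{-1/2}$ by \eqref{bound12abv}; the argument for \eqref{v2pineq} is symmetric.

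There is essentially no substantive obstacle in this lemma: it is a bookkeeping consequence of the $L^2$-normalization combined with the concavity/convexity sector structure of $\mathcal{C}_1$-solutions. The only point to be careful about is to invoke concavity on the subinterval adjacent to each component's maximum and convexity on the subinterval adjacent to its minimum, which is exactly what is needed so that the chord lies below the graph in the first case and $|v_i'|$ is monotone with the right sign in the second.
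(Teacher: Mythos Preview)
Your proposal is correct and follows essentially the same approach as the paper: the $L^2$-constraint plus monotonicity for \eqref{bound12abv}, the chord-below-graph bound from concavity on $[0,\xi_1]$ (resp.\ $[\xi_2,1]$) for \eqref{x1ineq}--\eqref{x2ineq}, and convexity of $v_1$ on $[\xi_1,1]$ combined with \eqref{bound12abv} for \eqref{v1pineq}--\eqref{v2pineq}. The only cosmetic difference is in the last step, where the paper uses the tangent-line inequality $v_1(x_0)\ge v_1(x)+v_1'(x)(x_0-x)$ directly, while you phrase it via the monotonicity of $|v_1'|$; these are equivalent.
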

\begin{proof} Estimates \eqref{bound12abv} follow by the $L^2$ constraint:
\[
1 \ge \int_0^x v_1^2 \ge x \, v_1^2(x), \quad 1 \ge \int_x^1 v_2^2 \ge (1-x) v_2^2(x).
\]
For the other estimates, it is crucial to observe that $\lambda_1 -
g_1(v_2^2(\xi_1)) = 0$, as $v_1''(\xi_1) = 0$ ($\xi_1$ is a point in $(0,1)$
where $v_1'$ achieves its minimum). The function $\lambda_1 - g_1(v_2^2)$ is
decreasing, so by the equation for $v_1$ in \eqref{mainsys} we deduce that
$v_1$ is concave on $[0,\xi_1]$ and convex on $[\xi_1,1]$.

Concavity implies that
\[
v_1(x) \ge v_1(0) + \xi_1^{-1}(v_1(\xi_1) - v_1(0)) x
\]
in $[0,\xi_1]$. By invoking the $L^2$ constraint of $v_1$ and integrating,
\[
1 \ge \int_0^{\xi_1} v_1^2(x) dx \ge \frac{\xi_1}{3} [v_1^2(0) + v_1(0)v_1(\xi_1) + v_1^2(\xi_1)],
\]
and \eqref{x1ineq} follows. Similarly, concavity of $v_2$ on $[\xi_2,1]$ produces \eqref{x2ineq}.

By convexity of $v_1$ on $[x_0,1]$, and \eqref{bound12abv},
\[
-v_1'(x)(x-x_0) \le -v_1'(x)(x-x_0) + v_1(x) \le v_1(x_0) \le x_0^{-1/2},
\]
for all $x \in [x_0,1]$, and we have \eqref{v1pineq}. Similarly, \eqref{v2pineq} follows by concavity of $v_2$ on $[0,x_0]$.
\end{proof}
\begin{lem}\label{boundslemma} Suppose that $\|v_{1,n}\|_\infty \le C_1$.
Then,
\begin{equation}\label{bound1bel}
v^2_{1,n}(x) \ge \frac{1}{2} \quad \text{in $[0,a_1]$},
\end{equation}
where $a_1 = a_1(C_1)$. Similarly, if $\|v_{2,n}\|_\infty \le C_2$,
\begin{equation}\label{bound2bel}
v^2_{2,n}(x) \ge \frac{1}{2} \quad \text{in $[a_2,1]$},
\end{equation}
where $a_2 = a_2(C_2)$.
\end{lem}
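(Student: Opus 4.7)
The plan is to prove this by a direct use of monotonicity combined with the $L^2$ normalization $\int_0^1 v_{i,n}^2 = 1$, not using the equation at all. The point is that a monotone function with a fixed $L^2$ norm and an $L^\infty$ upper bound cannot leave its initial level too quickly.

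First, I would focus on $v_1 = v_{1,n}$, which is decreasing on $[0,1]$. Since $v_1$ is decreasing, the superlevel set $\{x : v_1^2(x) \ge 1/2\}$ is of the form $[0,x^*]$ for some $x^* \in [0,1]$, where $x^*:= \sup\{x : v_1^2(x)\ge 1/2\}$ (if $v_1^2 \ge 1/2$ everywhere, there is nothing to prove). Observe also that $v_1(0) = \max v_1 \ge 1$, because otherwise $v_1 < 1$ on all of $(0,1)$ would give $\int_0^1 v_1^2 < 1$. In particular $C_1 \ge 1$.

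Next, I would split the $L^2$ constraint across $x^*$:
\[
1 = \int_0^1 v_1^2\,dx = \int_0^{x^*} v_1^2\,dx + \int_{x^*}^1 v_1^2\,dx \le C_1^2\, x^* + \tfrac{1}{2}(1-x^*),
\]
using $v_1 \le C_1$ on $[0,x^*]$ and $v_1^2 \le 1/2$ on $[x^*,1]$ (by definition of $x^*$ and continuity). Rearranging yields
\[
x^* \ge \frac{1}{2C_1^2 - 1} =: a_1(C_1),
\]
which is strictly positive because $C_1 \ge 1$. By monotonicity, $v_1^2(x) \ge 1/2$ for all $x\in[0,a_1]$, which is \eqref{bound1bel}.

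The second statement \eqref{bound2bel} follows by the symmetric argument: $v_2$ is increasing with $v_2(1) \ge 1$ and $\|v_2\|_\infty \le C_2$, so the superlevel set $\{v_2^2 \ge 1/2\}$ is of the form $[x_*,1]$ with $1-x_* \ge 1/(2C_2^2-1)$; one then sets $a_2 = 1 - 1/(2C_2^2-1)$. There is no real obstacle here: the argument uses only the monotonicity along $\mathcal{C}_1$ (guaranteed by Theorem \ref{thm:intro_1D} with $k=1$) and the prescribed $L^2$ mass, so no elliptic regularity or finer properties of the PDE are needed.
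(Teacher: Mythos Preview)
Your proof is correct and follows essentially the same approach as the paper: both use only the monotonicity of $v_1$ along $\mathcal{C}_1$ together with the $L^2$ constraint and the bound $\|v_1\|_\infty\le C_1$, and both arrive at the identical value $a_1=(2C_1^2-1)^{-1}$. The only cosmetic difference is that the paper evaluates the inequality $1\le x\,v_1^2(0)+(1-x)v_1^2(x)$ directly at $x=a_1$, whereas you first introduce the threshold point $x^*$ and then bound it from below; the underlying computation is the same.
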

\begin{proof} In view of the $L^2$ constraint on $v_1$ and its monotonicity we have that
\[
1 = \int_0^x v_1^2 + \int_x^1 v_1^2 \le x \, v_1^2(0) + (1-x) v_1^2(x)
\]
for all $x \in [0,1]$. Therefore, if $a_1 = (2C_1^2 - 1)^{-1}$,
\[
v_1^2(a_1) \ge \frac{1- a_1 v_1^2(0)}{1-a_1} \ge \frac{1- a_1 C_1^2}{1-a_1} = \frac{1}{2}.
\]
The assertion for $v_1$ follows. The estimate \eqref{bound2bel} for $v_2$ is analogous.
\end{proof}
\begin{lem} For both $i$ it holds
\begin{equation}\label{inftybound}
|v^2_{i,n}(0) - v^2_{i,n}(1)| \le 2 \frac{\lambda_{i,n}}{\nu},
\end{equation}
\begin{equation}\label{energyest}
\nu \|v'_{i,n}\|_\infty^2 \le \lambda_{i,n} \|v_{i,n}\|_\infty^2.
\end{equation}
\end{lem}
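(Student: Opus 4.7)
The plan is to establish \eqref{inftybound} and \eqref{energyest} by two short ODE computations, each tailored to the monotonicity of the components along $\mathcal{C}_1$; recall $v_1$ is decreasing and $v_2$ is increasing, so that $v_i^2$ attains its maximum $\|v_i\|_\infty^2$ at the endpoint $x_i^*$ with $x_1^*=0$ and $x_2^*=1$.

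For \eqref{energyest} I would multiply the equation $-\nu v_i'' + g_i(v_j^2)v_i = \lambda_i v_i$ by $v_i'$ and integrate over an interval having $x_i^*$ as one endpoint, namely $[0,x]$ for $i=1$ and $[x,1]$ for $i=2$. After one integration by parts, exploiting the Neumann condition at $x_i^*$ to kill the boundary term there, one arrives at an identity of the shape
\[
\frac{\nu}{2}(v_i'(x))^2 = \frac{\lambda_i}{2}\bigl(\|v_i\|_\infty^2 - v_i^2(x)\bigr) \;\pm\; \int g_i(v_j^2)\,v_i v_i'\,dt,
\]
where the coupling integral is nonpositive once moved to the left-hand side: on $[0,x]$ one has $v_1 v_1'\le 0$ (so the term appears with a $+$), while on $[x,1]$ one has $v_2 v_2'\ge 0$ but the term appears with a $-$. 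Discarding it, and also the nonnegative contribution $\lambda_i v_i^2(x)/2$, yields \eqref{energyest} pointwise in $x$.

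For \eqref{inftybound}, the key observation is the one-sided pointwise bound $-\nu v_i''\le \lambda_i v_i$, immediate from $g_i(v_j^2)v_i\ge 0$. Integrating once, from the endpoint where $v_i'$ vanishes, and using Cauchy--Schwarz against the $L^2$-normalization in the form $\int_0^1 v_i\,dx\le 1$, one obtains the \emph{uniform} derivative estimate $|v_i'(x)|\le\lambda_i/\nu$ on $[0,1]$. Then
\[
\bigl|v_i^2(0)-v_i^2(1)\bigr| = 2\left|\int_0^1 v_i\, v_i'\,dx\right|\le 2\,\frac{\lambda_i}{\nu}\int_0^1 v_i\,dx \le \frac{2\lambda_i}{\nu}.
\]

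No serious obstacle is expected: every ingredient (monotonicity, Neumann conditions, $L^2$-constraint, nonnegativity of $g_i$) is already in place. The only care required is, for each of the two estimates, to align the interval of integration with the endpoint where $v_i'=0$, so that the free boundary term vanishes and the coupling term carries the sign needed to drop it.
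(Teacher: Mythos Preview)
Your argument for \eqref{energyest} is exactly the paper's: test the equation with $v_i'$, integrate from the endpoint where $v_i'=0$, and discard the nonpositive coupling term $\int g_i(v_j^2)v_iv_i'$ using the sign of $v_i'$; the paper merely specializes to $x=\xi_i$ (the inflection point, where $|v_i'|$ is maximal) rather than leaving $x$ generic.

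For \eqref{inftybound} you take a slightly different but equally short route. The paper tests with $v_i$ and integrates on $[0,x]$ to get directly
\[
-\tfrac{\nu}{2}(v_i^2)'(x)=-\nu v_i'(x)v_i(x)\le \lambda_i\int_0^x v_i^2\le \lambda_i,
\]
then integrates once more on $[0,1]$. You instead integrate the pointwise inequality $-\nu v_i''\le\lambda_i v_i$ to obtain the intermediate estimate $|v_i'|\le \lambda_i/\nu$ (via $\int_0^1 v_i\le 1$), and then bound $|v_i^2(0)-v_i^2(1)|=2|\int v_iv_i'|$. Both arguments use only the Neumann condition, the sign of $g_i$, and the $L^2$-normalization; your version has the minor bonus of producing the uniform bound $\|v_i'\|_\infty\le \lambda_i/\nu$ along the way.
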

\begin{proof} We will prove the assertion when $i = 1$, the argument is analogous when $i = 2$. Multiplying the equation for $v_1$ by $v_1$ and integrating on $[0,x]$ yields
\[
-\nu v_1'(x) v_1(x) + \nu\int_0^x (v_1')^2 = \int_0^x (\lambda_1 - g_1)v_1^2,
\]
thus
\[
-\frac{\nu}{2} (v_1^2 )'(x) = -\nu v_1'(x) v_1(x)  \le  \lambda_1.
\]
By integrating again on $[0,1]$ we obtain \eqref{inftybound}.

On the other hand, testing the equation for $v_1$ by $v_1'$ and integrating
on $[0,\xi_1]$ we obtain
\[
\frac{\nu}{2} v_1'(\xi_1)^2 = \frac{\lambda_1}{2} v_1(0)^2 +
\int_0^{\xi_1} g_1v_1v_1',
\]
and \eqref{energyest} follows since $v_1'\leq 0$ in $[0,1]$.
\end{proof}

After the above preliminary estimates, the first part of our analysis is devoted to show that $\mathcal{C}_1$ enjoys uniform $L^\infty$ bounds as $\nu\to0$. To this aim we need two preliminary lemmas.
\begin{lem}\label{lnubound} Suppose that, for some $i$, $\|v_{i,n}\|_\infty \le C$ and $\lambda_{j,n} \rightarrow 0$. Then, there exists $C' > 0$ that does not depend on $n$ such that
\[
\lambda_{i,n} \le C' \nu_n.
\]
\end{lem}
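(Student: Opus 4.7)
The plan is to treat the case $i=1$, $j=2$ (the other case follows by the symmetric argument on $[a_2,1]$ after the reflection $x \mapsto 1-x$). The overall strategy is to exploit the $L^\infty$-bound on $v_{1,n}$ to localize a region near $x=0$ where $v_{1,n}^2$ stays above $1/2$; on that region the effective potential in the equation for $v_{2,n}$ becomes uniformly bounded below, forcing $v_{2,n}$ to be exponentially small there. This decay will then provide a cheap admissible test function in the variational characterization of $\lambda_{1,n}$ given by Definition \ref{def:Nash} together with Lemma \ref{lem:Nash_iff_sol}.

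Concretely, Lemma \ref{boundslemma} applied to $v_{1,n}$ gives some $a_1 = a_1(C) > 0$ such that $v_{1,n}^2 \ge 1/2$ on $[0,a_1]$ for every $n$. Since $g_2$ is increasing, $g_2(v_{1,n}^2) \ge g_2(1/2) > 0$ on $[0,a_1]$; combined with $\lambda_{2,n} \to 0$, this gives $\lambda_{2,n} - g_2(v_{1,n}^2) \le -c$ there, with $c := g_2(1/2)/2$, for all $n$ large. Hence the equation for $v_{2,n}$ rewrites as
\[
-v_{2,n}'' \le -\frac{c}{\nu_n}\, v_{2,n} \qquad \text{on } [0,a_1],
\]
while \eqref{bound12abv} ensures $v_{2,n} \le (1-a_1)^{-1/2}$ on this interval. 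Fixing any $b \in (0,a_1)$ and invoking Lemma \ref{comparison} together with Remark \ref{rem:comparison on half interval} (applicable thanks to $v_{2,n}'(0)=0$) yields
\[
\sup_{[0,b]} v_{2,n}^2 \le K \exp\bigl(-2(a_1-b)\sqrt{c/\nu_n}\bigr),
\]
with $K$ independent of $n$; in particular this bound is $o(\nu_n^p)$ for every $p>0$.

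For the last step, fix once and for all a nonnegative $\omega \in H^1(0,1)$ with $\supp \omega \subset [0,b]$ and $\int_0^1 \omega^2 = 1$ (e.g., a suitably rescaled half-cosine). By Lemma \ref{lem:Nash_iff_sol} and Definition \ref{def:Nash}, $\lambda_{1,n}$ coincides with the minimum of the Rayleigh quotient $w \mapsto \int_0^1 [\nu_n (w')^2 + g_1(v_{2,n}^2) w^2]$ over $L^2$-normalized $w$. Testing against $\omega$ and using $g_1(s) \le C_g s$ from \eqref{Gass},
\[
\lambda_{1,n} \le \nu_n \int_0^1 (\omega')^2 + C_g \sup_{[0,b]} v_{2,n}^2,
\]
and the previous exponential estimate gives $\lambda_{1,n} \le C' \nu_n$ for $n$ large (hence for every $n$ after enlarging $C'$). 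I expect the delicate point to be the exponential decay estimate: one has to match the coercivity of the effective potential (which comes from the combination of the $L^\infty$-bound on $v_{1,n}$ and $\lambda_{2,n}\to 0$) with the Neumann condition at $x=0$, so that the decay provided by Lemma \ref{comparison} survives all the way up to the boundary; once this is in place, the test-function estimate is essentially a one-liner.
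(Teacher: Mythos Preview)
Your proof is correct and follows essentially the same route as the paper: localize via Lemma~\ref{boundslemma} to an interval $[0,a_1]$ where $v_{1,n}^2\ge 1/2$, use the comparison principle (Lemma~\ref{comparison} together with Remark~\ref{rem:comparison on half interval}) to force exponential decay of $v_{2,n}$ there, and then test the variational characterization of $\lambda_{1,n}$ with a fixed cut-off cosine supported in that interval. The only cosmetic differences are that the paper writes the coercivity constant as $C_g^{-1}/4$ via \eqref{Gass} and takes $b=a_1/2$ explicitly, while you keep $g_2(1/2)/2$ and a generic $b\in(0,a_1)$.
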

\begin{proof} We will detail the proof in the case $i=1$. Note that
\[
g_2(v_1^2(x)) - \lambda_2 \ge g_2(1/2) - \lambda_2 \ge C_g^{-1}/2 - \lambda_2 \ge C_g^{-1}/4 \quad \text{in $[0,a_1]$}
\]
by the monotonicity of $g_2$, \eqref{bound1bel}, \eqref{Gass} and $\lambda_2 \rightarrow 0$. Hence,
\[
-v_2'' = -\frac{g_2(v_1^2)- \lambda_2}{\nu}\, v_2 \le \frac{C_g^{-1}}{4 \nu} \, v_2
\]
in $(0,a_1)$, and Lemma \ref{comparison} (or better Remark \ref{rem:comparison on half interval}) allows to conclude that
\begin{equation}\label{est1}
v_2(x) \le 2v_2(a_1) e^{-C/\sqrt{\nu}}\quad  \text{in $[0,a_1/2]$},
\end{equation}
for some $C = C(a_1, C_g^{-1}) > 0$.

Recalling Definition \ref{def:Nash},
we choose ${w}(x) := \sqrt{\frac{4}{a_1}} \cos\left(\frac{\pi}{a_1}x\right)$ for $x \in [0, a_1/2]$ and ${w} \equiv 0$ in $[a_1/2, 1]$ to conclude that, for some $C' > 0$,
\[
\lambda_1 \le \int_0^{a_1/2} \nu({w}')^2 +  g_1(v_2^2){w}^2 \le \frac{\nu \pi}{a_1} + g_1\left(4v_2^2(a_1) e^{-2C/\sqrt{\nu}}\right) 
\le\frac{ \nu \pi}{a_1} + \frac{4C_g e^{-2C/\sqrt{\nu}}}{1-a_1} \le C' \nu,
\]
by \eqref{Gass}, \eqref{est1} and \eqref{bound12abv}.
\end{proof}
\begin{lem}\label{v1pl2} Suppose that, for some $i$, $\|v_{i,n}\|_\infty  \rightarrow +\infty$. Then,
\begin{equation}
\nu \|v_{i,n}'\|^2_\infty \le C \lambda_{j,n}
\end{equation}
for some $C > 0$ that does not depend on $n$.
\end{lem}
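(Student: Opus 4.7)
The plan is as follows. Assume WLOG $i=1$ (the other case being symmetric); I drop the subscript $n$. Since $\mathcal{C}_1$ consists of strictly monotone pairs with a unique inflection each (Proposition \ref{prop:degree}), $\|v_1'\|_\infty=|v_1'(\xi_1)|$ and $g_1(v_2^2(\xi_1))=\lambda_1$. Multiplying the equation of $v_1$ by $v_1'$ and integrating on $[\xi_1,1]$ (using $v_1'(1)=0$ and $v_1''(\xi_1)=0$) yields the starting identity
\[
\frac{\nu}{2}(v_1'(\xi_1))^2 = \int_{\xi_1}^1 \bigl(g_1(v_2^2)-\lambda_1\bigr)\, v_1\,|v_1'|\,dx \le C_g \int_{\xi_1}^1 v_2^2\, v_1\,|v_1'|\,dx,
\]
where I used $g_1(v_2^2)\le C_g v_2^2$ and dropped $\lambda_1\ge 0$. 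Bounding $v_1\le v_1(\xi_1)$ on $[\xi_1,1]$ and using $\int_{\xi_1}^1 v_1|v_1'|\,dx=(v_1^2(\xi_1)-v_1^2(1))/2$, along with $v_2\le \|v_2\|_\infty$, reduces matters to $\nu\|v_1'\|_\infty^2\le C_g^2\,\|v_2\|_\infty^2\,v_1^2(\xi_1)$. The proof then hinges on proving (a)~the pointwise bound $v_1^2(\xi_1)\le C\lambda_2$, and (b)~the uniform bound $\|v_2\|_\infty\le C$.

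The pointwise bound (a) is the crux and is equivalent, via monotonicity of $v_1$ and the identity $v_1^2(\xi_2)=g_2^{-1}(\lambda_2)\le C_g\lambda_2$, to the ordering $\xi_2\le \xi_1$ of the inflection points. I would establish it via a spike/comparison argument: from $v_1(0)\to+\infty$ and \eqref{inftybound} one has $v_1^2(0)\sim 2\lambda_1/\nu$; since $v_2(0)$ is exponentially small (so $g_1(v_2^2(0))\approx 0$), the concavity estimate $v_1''(0)\sim -\lambda_1 v_1(0)/\nu$ and a Taylor expansion give an interval $[0,b]$ of length $b\gtrsim \sqrt{\nu/\lambda_1}$ on which $v_1\ge v_1(0)/2=:M\to +\infty$, with $bM\gtrsim 1$. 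On $[0,b]$ one has $g_2(v_1^2)-\lambda_2\ge C_g^{-1}M^2/2$, and Lemma \ref{comparison} (with Remark \ref{rem:comparison on half interval}) forces $v_2$ to be exponentially small there, with decay rate $\sim M/\sqrt{\nu}$. Consequently the $L^2$ mass of $v_1$ is concentrated on $[0,b]$, $v_1$ has dropped below $\sqrt{C_g\lambda_2}$ just beyond $b$ (placing $\xi_2$ at the edge of the spike), while the growth of $v_2$ up to the level $\sqrt{g_1^{-1}(\lambda_1)}$ can only occur significantly further to the right (placing $\xi_1$ well past $b$); hence $\xi_2\le \xi_1$, so $v_1(\xi_1)\le v_1(\xi_2)\le \sqrt{C_g\lambda_2}$.

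For (b), I would rule out $\|v_2\|_\infty\to +\infty$ by contradiction: in that case the analogous spike analysis applied to $v_2$ would produce a boundary layer of height $\sim\sqrt{\lambda_2/\nu}$ at $x=1$. Using the comparable spike structure of $v_1$ at $x=0$, an elementary lower bound on the interaction integral $\int v_1^2 v_2^2$ via the contribution of the (necessarily present) overlap between the two transition regions would blow up as $\nu\to 0$, contradicting the uniform bound $\int v_1^2 v_2^2\le C_g\lambda_i\le C_g^2$ from \eqref{identity1}.

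The main obstacle is step (a): making the comparison argument quantitative requires carefully balancing the spike width $\sim\sqrt{\nu/\lambda_1}$ with the exponential decay rate $\sim M/\sqrt{\nu}=\sqrt{\lambda_1}/(2\sqrt{C_g}\,\nu)$, so that the product $bM/\sqrt{\nu}\to+\infty$ guarantees that $v_2$ is genuinely small on the entire spike of $v_1$, and then tracking how $v_2$ grows past the spike to its level $\sqrt{g_1^{-1}(\lambda_1)}$; this is where the self-consistent relation $v_1(0)^2\sim 2\lambda_1/\nu$ read from \eqref{inftybound} enters crucially to close the loop.
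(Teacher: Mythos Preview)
Your reduction to the two subclaims (a)~$v_1^2(\xi_1)\le C\lambda_2$ and (b)~$\|v_2\|_\infty\le C$ is clean, but both subclaims are problematic at this point in the argument, and the paper proceeds quite differently.

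The serious issue is (b). Boundedness of $\|v_2\|_\infty$ is exactly the content of Lemma~\ref{vboundinfty}, which the paper proves \emph{after} and \emph{using} the present lemma. Your proposed independent proof of (b) does not hold up: if both $v_1$ and $v_2$ are unbounded, the spikes concentrate at opposite endpoints (by \eqref{x1ineq}--\eqref{x2ineq} one has $\xi_1\to0$ and $\xi_2\to1$), so the ``transition regions'' are far apart, and there is no reason the interaction integral $\int v_1^2v_2^2$ should blow up. In fact \eqref{identity1} only gives the \emph{upper} bound, and nothing forces a large overlap. Moreover, once you allow $v_2$ unbounded, your argument for (a) collapses outright: in that regime $\xi_2\to1$ while $\xi_1\to0$, so $\xi_2>\xi_1$, the opposite of what you claim. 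Thus (a) secretly depends on (b), and (b) is not available.

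The paper avoids this circularity by \emph{not} trying to bound $\|v_2\|_\infty$. Instead it establishes only the much weaker fact that $|v_2'|\le c_2$ on $[0,1/2]$, treating the bounded and unbounded $v_2$ cases separately (via Lemmas~\ref{lgoestozero},~\ref{lnubound} and \eqref{energyest} in the first case, via \eqref{x2ineq}--\eqref{v2pineq} in the second). It then integrates the equation for $v_2$---not $v_1$---on $[\xi_1,1/2]$ to get $\int_{\xi_1}^{1/2}v_1^2v_2\le C(\lambda_2+\nu)$, and closes with the conserved quantity
\[
\overline{T}_1(x)=\nu(v_1')^2+[\lambda_1-g_1(v_2^2)]v_1^2+2\int_{1/2}^x g_1'(v_2^2)v_2'v_2v_1^2\,d\sigma,
\]
evaluated at $\xi_1$ and at $1/2$. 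This yields $\nu\|v_1'\|_\infty^2\le C(\nu+\lambda_2)$ directly, with no need to control $v_1(\xi_1)$, the ordering of $\xi_1,\xi_2$, or $\|v_2\|_\infty$. Your starting identity (multiplying by $v_1'$) is morally the differential form of $\overline{T}_1$, but by immediately pulling out $\|v_2\|_\infty^2$ you lose the structure that lets the paper trade the problematic sup-norm for the integral quantity $\int v_1^2v_2$, which is what genuinely connects to $\lambda_2$.
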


\begin{proof}  We will detail the proof in the case $i=1$, thus assuming
\[
v_1(0) \to +\infty.
\]
Note that $|v_2'| \le c_2$ in $[0,1/2]$ for some $c_2 > 0$. Indeed, if $v_2$ is bounded then Lemma \ref{lgoestozero} and Lemma \ref{lnubound} imply that $\lambda_2 \le C_2' \nu$ for some $C_2' > 0$, and by \eqref{energyest} it follows that $\|v_2'\|^2_\infty \le C_2' \|v_2\|^2_\infty$. On the other hand, if $v_2(1)$ is unbounded, then $\xi_2 \rightarrow 1$ (see \eqref{x2ineq}), and then we have the required bound by \eqref{v2pineq} (choose, for example, $x_0 =3/4$).

We now integrate the equation for $v_2$ on $[\xi_1, 1/2]$, use \eqref{Gass} and $\int v_2^2 = 1$ to obtain
\begin{equation}\label{bbound}
C_g^{-1} \int_{\xi_1}^{1/2} v_1^2 v_2 \le \int_{\xi_1}^{1/2} g_2(v_1^2) v_2 = \lambda_2 \int_{\xi_1}^{1/2} v_2 + \nu (v_2'(1/2)- v_2'(\xi_1)) \le  \lambda_2 + 2c_2 \nu
\end{equation}

Let $\overline{T}_1$ be the function
\[
\overline{T}_1(x) = \nu (v'_1(x))^2 + [\lambda_1 - g_1(v_2^2(x))]v_1^2(x) + 2 \int_{1/2}^x g_1'(v_2^2(\sigma))v'_2(\sigma) v_2(\sigma)v_1^2(\sigma) d\sigma.
\]
$\overline{T}_1$ is easily verified to be constant in $[0,1]$.
Since $\lambda_1 - g_1(v_2^2(x))$ is decreasing and $\lambda_1 - g_1(v_2^2(\xi_1)) = 0$,
$\lambda_1 - g_1(v_2^2(1/2)) \le 0$, as $\xi_1 \le 1/2$ ($\xi_1 \rightarrow 0$ because $v_1(0) \rightarrow +\infty$). Hence,
\begin{multline*}
\nu (v'_1(\xi_1))^2 + 2 \int_{1/2}^{\xi_1} g_1'(v_2^2)v'_2 v_2 v_1^2 d\sigma = \overline{T}_1(\xi_1) \\ = \overline{T}_1(1/2) = \nu (v'_1(1/2))^2 + [\lambda_1 - g_1(v_2^2(1/2))]v_1^2(1/2)
\end{multline*}
and
\[
\nu \|v'_1\|^2_\infty = \nu (v'_1(\xi_1))^2 \le \nu (v'_1(1/2))^2 + 2 \int^{1/2}_{\xi_1} g_1'(v_2^2)v'_2  v_1^2 v_2 d\sigma \le C (\nu + \lambda_2).
\]
The last bound comes from $|v_2'| \le c_2$, $|v_2| \le 1+ c_2/2$, \eqref{bbound} and $|v_1'| \le c_1$ in $[1/2, 1]$ (use \eqref{v1pineq}: $v_1$ is unbounded and $\xi_1 \rightarrow 0$).
\end{proof}
As already mentioned, the previous results allow to obtain uniform bounds for the sequence we are considering.
\begin{lemma}\label{vboundinfty} There exists $C_\infty > 0$, that does not depend on $n$, such that
\[
\|v_{i,n}\|_\infty \le C_\infty, \qquad i=1,2.
\]
\end{lemma}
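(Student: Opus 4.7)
I would argue by contradiction: suppose that, along some subsequence, $\|v_{i_0,n}\|_\infty \to +\infty$ for some $i_0\in\{1,2\}$. Since $v_{1,n}$ is decreasing and $v_{2,n}$ is increasing, this forces either $v_{1,n}(0)\to+\infty$ or $v_{2,n}(1)\to+\infty$; the reflection $x\mapsto 1-x$ swaps the two equations (with $g_1$ and $g_2$ exchanged), so without loss of generality $M_n:=v_{1,n}(0)\to+\infty$. I then split into two cases.

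\textbf{Case 1: $\|v_{2,n}\|_\infty\le C$ stays uniformly bounded.} Here I would chain the three preceding lemmas. Since the unbounded component is $v_1$, segregation (Lemma \ref{lgoestozero}) forces $\lambda_{1,n}\to 0$. Lemma \ref{lnubound} applied with $i=2$, $j=1$ then gives $\lambda_{2,n}\le C\nu_n$. Feeding this into Lemma \ref{v1pl2}, applied to the unbounded $v_1$, yields $\nu_n\|v_{1,n}'\|_\infty^2\le C\lambda_{2,n}\le C^2\nu_n$, so $\|v_{1,n}'\|_\infty$ is uniformly bounded. Combined with $v_{1,n}(1)\le 1$ (from \eqref{bound12abv}), this gives $M_n\le v_{1,n}(1)+\|v_{1,n}'\|_\infty\le 1+C'$, contradicting $M_n\to+\infty$.

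\textbf{Case 2: $v_{2,n}(1)\to+\infty$ as well.} This is the main obstacle, since Lemma \ref{lnubound} requires a uniform $L^\infty$ bound on one of the two components. The plan is to adapt the proof of Lemma \ref{lnubound} to the doubly-unbounded setting. First, Lemma \ref{v1pl2} applied to the unbounded $v_1$ gives $\|v_{1,n}'\|_\infty\le C(\lambda_{2,n}/\nu_n)^{1/2}$, so that $v_{1,n}\ge M_n/2$ on an interval $[0,\delta_n]$ of length $\delta_n\gtrsim M_n(\nu_n/\lambda_{2,n})^{1/2}$. On this interval $g_2(v_{1,n}^2)-\lambda_{2,n}\gtrsim M_n^2$ for $n$ large (using $\lambda_{2,n}\le C_g$), so Lemma \ref{comparison} together with Remark \ref{rem:comparison on half interval} (applicable thanks to the Neumann condition $v_{2,n}'(0)=0$) produces the exponential bound
\[
v_{2,n}(x)\le 2\,v_{2,n}(\delta_n)\exp\!\bigl(-c\,M_n^2/\sqrt{\lambda_{2,n}}\bigr)\qquad\text{on }[0,\delta_n/2].
\]
Plugging a smooth cutoff test function supported on $[0,\delta_n/2]$ into Definition \ref{def:Nash} then gives an estimate of the form
\[
\lambda_{1,n}\le C\,\frac{\lambda_{2,n}}{M_n^2}+C\,v_{2,n}(1)^2\exp\!\bigl(-cM_n^2/\sqrt{\lambda_{2,n}}\bigr).
\]
Combined with the lower bound $\lambda_{1,n}\ge c\,\nu_n\,v_{2,n}(1)^2$ (which follows from Lemma \ref{v1pl2} applied to the unbounded $v_2$, using $\|v_{2,n}'\|_\infty\ge v_{2,n}(1)-v_{2,n}(0)\ge v_{2,n}(1)-1$), together with the symmetric inequalities obtained via the reflection $x\mapsto 1-x$ (which produce analogous estimates with $M_n$ and $v_{2,n}(1)$ interchanged), one obtains a closed system of inequalities that ultimately forces either $M_n$ or $v_{2,n}(1)$ to stay bounded, a contradiction.

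The central obstacle is the simultaneous blow-up in Case 2: neither component is bounded, so Lemma \ref{lnubound} does not apply directly, and one must instead carefully quantify the exponential decay of $v_{2,n}$ in the \emph{shrinking} region where $v_{1,n}$ is comparable to $M_n$, and then exploit the resulting chain of estimates in both directions using the reflection symmetry.
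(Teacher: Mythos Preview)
Your Case~1 is correct and in fact cleaner than the paper's treatment of that situation: chaining Lemmas~\ref{lgoestozero}, \ref{lnubound}, \ref{v1pl2} to get a uniform Lipschitz bound on $v_1$, and then using $v_1(1)\le 1$, is a nice direct argument that avoids any blow-up.

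Case~2, however, is only a plan, and the plan does not clearly close. You never verify that the interval $[0,\delta_n]$ on which $v_1\ge M_n/2$ actually has length comparable to $M_n(\nu/\lambda_2)^{1/2}$; the $L^2$ constraint forces $\delta_n\le 4/M_n^2$, so in fact you only know $\delta_n=\min\{cM_n(\nu/\lambda_2)^{1/2},\,4/M_n^2\}$, and which regime you are in matters for the size of the gradient term in your test-function estimate. More importantly, the final step---``one obtains a closed system of inequalities that ultimately forces either $M_n$ or $v_{2,n}(1)$ to stay bounded''---is asserted but not carried out; it is not evident that the four inequalities you list (two upper bounds, two lower bounds, plus their reflected versions) combine to a contradiction without further information on the relative sizes of $M_n$, $v_{2,n}(1)$, $\lambda_{1,n}$, $\lambda_{2,n}$, $\nu_n$.

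The paper sidesteps the whole dichotomy by a single blow-up. After reducing (either by the reflection $x\mapsto 1-x$, or via Lemmas~\ref{lgoestozero} and \ref{lnubound} when one component stays bounded) to the situation $v_1(0)\to\infty$ \emph{and} $\lambda_2\le\lambda_1$, Lemma~\ref{v1pl2} gives $\|v_1'\|_\infty\le C\sqrt{\lambda_2/\nu}\le C\sqrt{\lambda_1/\nu}$. One then sets
\[
\tilde v_1(x)=\frac{1}{v_1(0)}\,v_1\!\left(x\sqrt{\nu/\lambda_1}\right),\qquad
\tilde v_2(x)=v_2\!\left(x\sqrt{\nu/\lambda_1}\right),
\]
so that $|\tilde v_1'|\le C/v_1(0)\to 0$ and hence $\tilde v_1\to V\equiv 1$ locally uniformly. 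Comparison (Remark~\ref{rem:comparison on half interval}) then forces $\tilde v_2\to 0$ with $g_1(\tilde v_2^2)/\lambda_1\to 0$, and passing to the limit in the rescaled equation for $\tilde v_1$ yields $-V''=V$, contradicting $V\equiv 1$. This handles both your cases at once and avoids the bookkeeping your Case~2 would require.
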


\begin{proof} Without loss of generality, we can assume by contradiction that
\begin{equation*}
v_1(0) \rightarrow \infty, \quad \text{and $\lambda_2 \le \lambda_1$.}
\end{equation*}
Indeed, if both $v_1(0)$ and $v_2(1)$ are unbounded, such condition can be guaranteed by interchanging the role of $v_1$ and $v_2$. Otherwise, suppose that, say, $v_1(0) \rightarrow \infty$ and $v_2$ is bounded: by Lemmas \ref{lgoestozero} and \ref{lnubound} there exists $C > 0$ such that $\lambda_2 \le C \nu$, while $\lambda_1/ \nu \rightarrow \infty$ (otherwise $v_1$ would be bounded in view of \eqref{inftybound}). Therefore,
$\lambda_2 \le \lambda_1$ whenever $\nu$ is sufficiently small and we infer, by Lemma \ref{v1pl2}, the existence of $C > 0$ such that
\begin{equation}\label{eq2}
\|v_1'\|_\infty \le C \sqrt{\frac{\lambda_1}{\nu}}.
\end{equation}

We proceed as in the proof of Lemma \ref{lgoestozero}, by defining the blow-up sequences
\[
\tilde{v}_{1}(x) := \frac{1}{v_{1}(0)} v_{1}\left(x\sqrt{\frac{\nu}{\lambda_1}}\right),\qquad
\tilde{v}_{2}(x) :=  v_{2}\left(x\sqrt{\frac{\nu}{\lambda_1}}\right),
\]
Note that $0 \le \tilde{v}_1 \le 1$ in $[0, \lambda_1^{1/2}\nu^{-1/2}]$, and that $\tilde{v}_i(0) = 1$. Since, in such interval,
\[
|\tilde v_1'(x)| = \frac{1}{v_{1}(0)}\sqrt{\frac{\nu}{\lambda_1}}\left| v'_{1}\left(x\sqrt{\frac{\nu}{\lambda_1}}\right)\right| \leq \frac{C}{v_{1}(0)}\to 0
\]
(we used \eqref{eq2}), we deduce that $\tilde v_1 \to V\equiv 1$, uniformly in every $[a,b]\subset [0,+\infty)$. As a consequence, in any such interval,
\[
-\tilde v_2'' = \left(\frac{\lambda_2}{\lambda_1} - \frac{g_2(v_1(0)\tilde v_1^2)}{\lambda_1}\right) \tilde{v}_2 \leq
\left(1 - \frac{C_g^{-1}}{2\lambda_1} v^2_{1}(0)  \right)\tilde{v}_2
\leq - C^2 \frac{v^2_{1}(0)}{\lambda_1} \tilde{v}_2,
\]
with $C>0$, and Remark \ref{rem:comparison on half interval} applies, yielding
\[
\tilde v_2(x) \leq \tilde v_2(b+1) e^{-Cv_1(0)/\sqrt{\lambda_1}} \leq 2
e^{-Cv_1(0)/\sqrt{\lambda_1}} \qquad \text{ for }x\in[0,b],
\]
for $\nu$ sufficiently small (recall \eqref{bound12abv}). Then
\[
\frac{g_1(\tilde v_2^2)}{\lambda_1} \le
\frac{C_1 e^{-C_2v_1(0)/\sqrt{\lambda_1}}}{\lambda_1} \le
\frac{C_3}{v_1^2(0)} \to 0,
\]
and we can plug such estimate in the equation for $\tilde v_1$
\[
- \tilde{v}''_1 = \left(1 - \frac{g_1(\tilde v_2^2)}{\lambda_1}\right) \tilde{v}_1,
\]
in order to pass to the limit and obtain
\[
-V''=V \qquad \text{in }(0,+\infty),
\]
in contradiction with the fact that $V\equiv 1$.
\end{proof}
Uniform $L^\infty$ bounds readily provide Lipschitz ones, thus yielding convergence to some limiting profiles.
\begin{proposition}\label{vprimebound}
There exists $C'_\infty > 0$, not depending on $n$, such that
\[
\|v'_{i,n}\|_\infty \le C'_\infty \quad i=1,2.
\]
As a consequence, up to subsequences,
\begin{equation}\label{V1V2zero}
v_{i,n} \rightarrow V_i \text{ in $C^{0,\alpha}([0,1])$}, \qquad \text{ with }\int_0^1V_1^2 = \int_0^1 V_2^2=1\text{ and } V_1 \cdot V_2 \equiv 0 \text{ in }[0,1],
\end{equation}
and
\begin{equation}\label{lambdavsnu}
\frac{\lambda_ {i,n}}{\nu_n} \rightarrow \ell_i > 0,
\end{equation}
as $n \rightarrow +\infty$.
\end{proposition}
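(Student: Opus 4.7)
The plan is to leverage the uniform $L^\infty$ bound already secured in Lemma \ref{vboundinfty} and chain it with the earlier lemmas to produce the Lipschitz estimate; the $C^{0,\alpha}$ convergence and the segregation $V_1V_2\equiv 0$ are then routine passages to the limit, and the positivity of $\ell_i$ is the only moderately delicate point.

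First I would fix $\|v_{i,n}\|_\infty \le C_\infty$ via Lemma \ref{vboundinfty}, apply Lemma \ref{l1l2zero} (both components bounded) to conclude $\lambda_{1,n},\lambda_{2,n} \to 0$, and then invoke Lemma \ref{lnubound} twice, swapping the roles of the indices, to upgrade this to $\lambda_{i,n} \le C\,\nu_n$ for a constant $C$ independent of $n$ and for both $i$. Plugging into the energy identity \eqref{energyest} yields
\[
\|v'_{i,n}\|_\infty^2 \le \frac{\lambda_{i,n}}{\nu_n}\,\|v_{i,n}\|_\infty^2 \le C\,C_\infty^2,
\]
which is the sought bound $C'_\infty$.

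Next, Arzel\`a-Ascoli applied to the uniform $C^{0,1}$ sequence extracts (up to a subsequence) $v_{i,n} \to V_i$ in $C^{0,\alpha}([0,1])$ for every $\alpha<1$. Uniform convergence transports the constraint $\int_0^1 V_i^2=1$. For segregation I would use the first identity of Lemma \ref{lem:prel_estimates} together with \eqref{Gass}:
\[
C_g^{-1}\!\int_0^1 v_{1,n}^2\,v_{2,n}^2\,dx \le \int_0^1 g_i(v_{j,n}^2)\,v_{i,n}^2\,dx \le \lambda_{i,n} \to 0,
\]
and pass to the limit by uniform convergence to get $\int_0^1 V_1^2 V_2^2 = 0$; continuity of $V_1, V_2$ then forces $V_1 V_2 \equiv 0$ on $[0,1]$.

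For \eqref{lambdavsnu}, the ratios $\lambda_{i,n}/\nu_n$ are bounded by the first step, so up to a further subsequence $\lambda_{i,n}/\nu_n \to \ell_i \ge 0$. The hard point --- in fact the only nontrivial piece of the argument --- is excluding $\ell_i = 0$. I would argue by contradiction: if $\ell_i = 0$, then \eqref{energyest} gives $\|v'_{i,n}\|_\infty \to 0$, so the limit $V_i$ is constant and the $L^2$ normalization forces $V_i \equiv 1$. The segregation $V_1 V_2 \equiv 0$ would then compel $V_j \equiv 0$, contradicting $\int_0^1 V_j^2 = 1$. Hence $\ell_i > 0$ for both $i$, completing the proof.
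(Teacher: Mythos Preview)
Your proof is correct and follows essentially the same route as the paper's: chain Lemma \ref{vboundinfty}, Lemma \ref{l1l2zero}, Lemma \ref{lnubound}, and \eqref{energyest} for the Lipschitz bound, then pass to the limit. The only cosmetic difference is in ruling out $\ell_i=0$: the paper invokes \eqref{inftybound} (together with the monotonicity of $v_i$ along $\mathcal{C}_1$) to force $V_i(0)=V_i(1)$ and hence $V_i\equiv 1$, whereas you use \eqref{energyest} to get $\|v'_{i,n}\|_\infty\to 0$ directly---both arguments reach the same contradiction with \eqref{V1V2zero}.
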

\begin{proof} Lemma \ref{vboundinfty} guarantees the uniform $L^\infty$ bound for $v_1, v_2$, hence $\lambda_1, \lambda_2 \rightarrow 0$ by Lemma \ref{l1l2zero}. As a consequence we can apply
Lemma \ref{lnubound}, for both $i$, obtaining that there exists $C'_i > 0$ that does not depend on $\nu$ such that
\[
\lambda_i \le C'_i \nu.
\]
This implies  that both $\nu \|v'_i\|_\infty^2 \le \nu C'_i \|v_1\|_\infty^2$, by \eqref{energyest}, and, up to subsequences, both $v_{i}\to V_i$ in $C^{0.\alpha}$ and $\lambda_i/\nu\to\ell_i\geq0$. Since
uniform convergence implies $L^2$-one, the required properties for the limiting profiles $V_i$ follow (recall Corollary \ref{coro:suff_cond_segr}), and the only thing that remains to be proved is that
both $\ell_i>0$.

Assume by contradiction that, for instance, $\ell_1=0$. Then we can use equation \eqref{inftybound} to infer that $V_1\equiv 1$,
in contradiction with \eqref{V1V2zero}.
\end{proof}
\begin{remark}\label{rem:H1conv}
Once we know that $v_{i,n}\to V_i$ uniformly, the strong $H^1$ convergence follows by standard arguments. Indeed, integrating the equations we have
\[
0\leq\frac{1}{\nu}\int_0^1 g_i(v_{j,n}^2)v_{i,n}\,dx = \frac{\lambda_{i,n}}{\nu}\int_0^1 v_{i,n}\,dx \leq C;
\]
therefore, testing with $v_{i,n}- V_i$ we infer
\[
\int_0^1 v'_{i,n} (v'_{i,n}- V'_i)\,dx \leq \max_{[0,1]}|v_{i,n}- V_i| \cdot \frac{1}{\nu}\int_0^1 (\lambda_{i,n}+g_i(v_{j,n}^2))v_{i,n}\,dx \to 0.
\]
As a consequence, weak $H^1$ convergence implies convergence in norm, and finally strong $H^1$ one.
\end{remark}
The remaining part of the section will be devoted to fully characterize the limits $V_i, \ell_i$. To this aim, we need a sharper analysis of the convergence of $v_{i,n}$.
\begin{lem}\label{comparison2} Suppose that, as $n\to+\infty$, $v_{1,n} (y_n)
\ge c \nu_n^{1/2 - \epsilon}$ for some $y_n \in [0,1)$, $c > 0$,
$0 < \epsilon \le 1/2$. Then there exists $c_1 > 0$ such that
\begin{equation}
v_{2,n}(x) \le 2v_{2,n}(y_n) e^{- c_1 (y_n-x) \nu_n^{-\epsilon}} \quad \text{in $[0,y_n]$}.
\end{equation}
\end{lem}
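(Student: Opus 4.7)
The plan is to turn the hypothesis into a pointwise lower bound on the potential $g_2(v_{1,n}^2)$ on the whole interval $[0,y_n]$, and then to apply Lemma \ref{comparison} together with Remark \ref{rem:comparison on half interval} to the equation satisfied by $v_{2,n}$.

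First I would exploit the monotonicity of $v_{1,n}$ on the branch $\mathcal{C}_1$: since $v_{1,n}$ is decreasing on $[0,1]$, the hypothesis gives $v_{1,n}(x) \ge v_{1,n}(y_n) \ge c\,\nu_n^{1/2-\epsilon}$ for all $x \in [0,y_n]$, and combining this with the lower bound $g_2(s) \ge C_g^{-1} s$ from \eqref{Gass} yields
\[
g_2(v_{1,n}^2(x)) \ge C_g^{-1} c^2 \nu_n^{1-2\epsilon} \qquad \text{for every } x \in [0,y_n].
\]
By Proposition \ref{vprimebound} we already know that $\lambda_{2,n}\le C\nu_n$ for some $C$ independent of $n$; since $\epsilon>0$, we have $\nu_n = o(\nu_n^{1-2\epsilon})$, so for $n$ large enough
\[
g_2(v_{1,n}^2(x)) - \lambda_{2,n} \ge \tfrac{1}{2} C_g^{-1} c^2\, \nu_n^{1-2\epsilon} \qquad \text{in } [0,y_n].
\]

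Rewriting the equation for $v_{2,n}$ as $-v_{2,n}'' = -\nu_n^{-1}\bigl(g_2(v_{1,n}^2)-\lambda_{2,n}\bigr)v_{2,n}$, the previous bound gives
\[
-v_{2,n}''(x) \le -M_n\, v_{2,n}(x) \quad\text{in } (0,y_n),\qquad M_n := \tfrac{1}{2} C_g^{-1} c^2\, \nu_n^{-2\epsilon}.
\]
Furthermore, $v_{2,n}$ is non-negative and, being on $\mathcal{C}_1$, non-decreasing on $[0,y_n]$, so $0\le v_{2,n} \le v_{2,n}(y_n)$ there; and $v_{2,n}'(0)=0$ by the Neumann condition. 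I would then apply Lemma \ref{comparison} together with Remark \ref{rem:comparison on half interval} (with $a=0$, $b=y_n$, $A=v_{2,n}(y_n)$, $\delta = y_n-x$) to conclude that, for every $x\in[0,y_n]$,
\[
v_{2,n}(x) \le 2\, v_{2,n}(y_n)\, \exp\!\bigl(-(y_n-x)\sqrt{M_n}\bigr) = 2\, v_{2,n}(y_n)\, \exp\!\bigl(-c_1(y_n-x)\nu_n^{-\epsilon}\bigr),
\]
with $c_1 = c\sqrt{C_g^{-1}/2}$, which is the claimed bound.

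The only delicate step is the absorption of $\lambda_{2,n}$ into $g_2(v_{1,n}^2)$: it rests on the previously established estimate $\lambda_{2,n}=O(\nu_n)$ from Proposition \ref{vprimebound} together with the strict inequality $\epsilon>0$, which guarantees that $\nu_n \ll \nu_n^{1-2\epsilon}$. With those ingredients in hand the rest is a routine application of the exponential decay principle, with the Neumann reflection supplied by Remark \ref{rem:comparison on half interval} accounting precisely for the fact that the bound must hold up to the endpoint $x=0$.
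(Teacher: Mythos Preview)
Your proof is correct and follows essentially the same approach as the paper's own argument: both use the monotonicity of $v_{1,n}$ together with \eqref{Gass} to bound $g_2(v_{1,n}^2)$ from below on $[0,y_n]$, absorb $\lambda_{2,n}$ via the estimate \eqref{lambdavsnu} (which is exactly what your appeal to Proposition~\ref{vprimebound} amounts to), and then apply Lemma~\ref{comparison} in the form of Remark~\ref{rem:comparison on half interval}. Your write-up is somewhat more detailed than the paper's, but the logic is identical.
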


\begin{proof} By the monotonicity of $v_1$, \eqref{Gass} and \eqref{lambdavsnu},
\[
g_2(v_1^2(x)) - \lambda_2 \ge C_g^{-1} v_1^2(x) - \lambda_2 \ge \frac{C_g^{-1} c^2}{2} \nu^{1 - 2\epsilon} \quad \text{in $[0,y]$}
\]
as $\nu \rightarrow 0$. Hence,
\[
-v_2'' = -\frac{g_2(v_1^2)- \lambda_2}{\nu} v_2 \le -\frac{C_g^{-1} c^2}{2} \nu^{- 2\epsilon}\, v_2
\]
in $(0,y)$, and we can conclude using Remark
\ref{rem:comparison on half interval}.
\end{proof}

\begin{rem}\label{comparison3}
A direct consequence of the previous lemma, which will be used thoroughly in
the sequel, is that if $\liminf_{\nu \rightarrow 0} v_1 (y) > 0$ for some
$y \in [0,1)$, then there exists $c_2 > 0$, $y < b < 1$ (that does not depend
on $\nu$) such that
\begin{equation}\label{comparison30}
v_2(x) \le C_\infty e^{- \frac{c_2}{\sqrt{\nu}}} \quad \text{in $[0,b]$}.
\end{equation}
Indeed, the assumption guarantees that $v_1 (y) \ge 2 c > 0$ for some $c >
0$, so, by Proposition \ref{vprimebound}, $v_1(y') \ge c$ for some $y' > y$.
Hence,
\[
v_2(x) \le C_\infty e^{- c_1 (y'-x) \nu^{-1/2}} \quad \text{in $[0,y']$},
\]
that implies \eqref{comparison30} if we choose $y < b < y'$, and $c_2 =
c_2(c_1, b, y, y') > 0$.

Note that $v_1(0) \ge 1$ for all $\nu$ (otherwise the mass constraint $
\int_0^1 v_1^2 dx = 1$ would be violated), thus
\begin{equation}\label{comparison31}
v_2(0) \le C_\infty e^{- c_2 \nu^{-1/2}} = o(\nu^a) \quad \text{for all $a > 0$.}
\end{equation}
for some $c_2 > 0$.

Analogous conclusions hold if $v_1$ and $v_2$ are interchanged.
\end{rem}

\begin{lem} The limit $V_i, \ell_i$ satisfy, in $[0,1]$,
\begin{align}
V_1(x) & = \frac{2}{\sqrt{\pi}} \sqrt[4]{\ell_1}  \cos\left(\sqrt{\ell_1}x\right) \cdot \chi_{\left[0, \frac{\pi}{2\sqrt{\ell_1}}\right]}(x), \label{V1} \\
V_2(x) & = \frac{2}{\sqrt{\pi}} \sqrt[4]{\ell_2}  \cos\left(\sqrt{\ell_2}\left(x-1\right)\right) \cdot \chi_{\left[1-\frac{\pi}{2\sqrt{\ell_2}}, 1\right]}(x). \label{V2}
\end{align}
Moreover, as $n\rightarrow +\infty$,
\begin{equation}\label{xilimit}
\xi_{1,n} \rightarrow \frac{\pi}{2\sqrt{\ell_1}}, \quad \xi_{2,n} \rightarrow 1-\frac{\pi}{2\sqrt{\ell_2}}.
\end{equation}
\end{lem}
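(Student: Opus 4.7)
My approach starts by introducing $s_1 := \sup\{x \in [0,1] : V_1(x) > 0\}$. Since the $L^2$-normalization and the monotonicity of $v_{1,n}$ force $v_{1,n}(0) \ge 1$, we have $V_1(0) \ge 1$, hence $s_1 > 0$; on the other hand $s_1 < 1$, for otherwise $V_2 \equiv 0$ would contradict $\int V_2^2 = 1$ provided by Proposition~\ref{vprimebound}. Monotonicity of $V_1$ as a pointwise limit of decreasing functions then ensures $V_1 > 0$ on $[0, s_1)$. The plan is to first derive the limiting ODE for $V_1$ on $[0, s_1)$, integrate it explicitly to obtain \eqref{V1}, and then prove \eqref{xilimit} separately.

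For the ODE step, I would fix any $b < s_1$ and apply Remark~\ref{comparison3} to obtain the pointwise exponential bound $v_{2,n} \le C_\infty e^{-c/\sqrt{\nu_n}}$ uniformly on $[0,b]$, so that $g_1(v_{2,n}^2)/\nu_n \to 0$ uniformly on $[0,b]$. Combining this with $\lambda_{1,n}/\nu_n \to \ell_1$, the equation
\[
-v_{1,n}'' = \left(\frac{\lambda_{1,n}}{\nu_n} - \frac{g_1(v_{2,n}^2)}{\nu_n}\right) v_{1,n}
\]
has right-hand side uniformly convergent to $\ell_1 V_1$ on $[0, b]$. Integrating twice and using $v_{1,n}'(0)=0$ upgrades the $C^{0,\alpha}$ convergence of Proposition~\ref{vprimebound} to $C^2_{\mathrm{loc}}([0, s_1))$. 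Passing to the limit yields $-V_1'' = \ell_1 V_1$ on $[0, s_1)$ with $V_1'(0) = 0$, so $V_1(x) = V_1(0)\cos(\sqrt{\ell_1}\, x)$ on $[0, s_1]$. Continuity of $V_1$ together with $V_1 \equiv 0$ on $(s_1,1]$ gives $V_1(s_1) = 0$, which forces $s_1 = \pi/(2\sqrt{\ell_1})$; the constraint $\int_0^1 V_1^2 = 1$ then pins down $V_1(0) = 2\sqrt[4]{\ell_1}/\sqrt{\pi}$, proving \eqref{V1}. Formula \eqref{V2} follows by the symmetric argument under $x \leftrightarrow 1-x$.

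For \eqref{xilimit}, the $C^2_{\mathrm{loc}}$ convergence above gives $v_{1,n}'' \to V_1'' = -\ell_1 V_1 < 0$ uniformly on each $[0, s_1 - \delta]$, so $v_{1,n}$ is strictly concave there for $n$ large, whence $\xi_{1,n} \ge s_1 - \delta$. To obtain the matching upper bound, I would argue by contradiction: suppose $\xi_{1,n} \to \xi_* > s_1$ along some subsequence. Since $v_{1,n}$ is concave on $[0, \xi_{1,n}]$, $|v_{1,n}'|$ is nondecreasing there; combining with the $C^2$ limit at $s_1 - \delta$ gives $|v_{1,n}'(x)| \ge |v_{1,n}'(s_1 - \delta)| \to V_1(0) \sqrt{\ell_1}\cos(\sqrt{\ell_1}\,\delta)$ for $x \in [s_1,\xi_{1,n}]$. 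Integration then yields
\[
v_{1,n}(s_1) - v_{1,n}(\xi_{1,n}) = \int_{s_1}^{\xi_{1,n}} |v_{1,n}'|\,dx \ge (\xi_{1,n} - s_1)\,|v_{1,n}'(s_1)|,
\]
whose left-hand side tends to $V_1(s_1) - V_1(\xi_*) = 0$, while the right-hand side remains bounded below by roughly $(\xi_* - s_1)\, V_1(0)\sqrt{\ell_1}/2 > 0$ once $\delta$ is small, a contradiction. The convergence of $\xi_{2,n}$ is analogous.

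The main obstacle is precisely this upper bound in \eqref{xilimit}: the easy relation $g_1(v_{2,n}^2(\xi_{1,n})) = \lambda_{1,n} \to 0$ only gives $v_{2,n}(\xi_{1,n}) \to 0$, which forces $\xi_* \le \inf\{x : V_2(x) > 0\}$, a quantity that may a priori be strictly larger than $s_1$. The argument above bypasses this by exploiting the concavity of $v_{1,n}$ on $[0, \xi_{1,n}]$ to transport the sharp $C^2$ information from $[0, s_1 - \delta]$ into the boundary layer $[s_1, \xi_{1,n}]$, where the $C^2$ bounds are not directly available.
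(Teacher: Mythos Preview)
Your proof is correct and follows essentially the same strategy as the paper: identify the positivity set of $V_1$, show $g_1(v_{2,n}^2)=o(\nu_n)$ there via Remark~\ref{comparison3}, pass to the limit in the equation to obtain $-V_1''=\ell_1 V_1$ with $V_1'(0)=0$, and read off the cosine profile from $V_1(s_1)=0$ and the $L^2$ constraint.

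The only noteworthy difference lies in the treatment of \eqref{xilimit}. For the lower bound $\liminf\xi_{1,n}\ge s_1$, the paper does not use your $C^2$ convergence; instead it exploits the inflection relation $g_1(v_{2,n}^2(\xi_{1,n}))=\lambda_{1,n}\ge c\nu_n$ to see that $v_{2,n}(\xi_{1,n})\gtrsim\sqrt{\nu_n}$, which via Remark~\ref{comparison3} forces $v_{1,n}(\xi_{1,n})\to0$ and hence $V_1(\lim\xi_{1,n})=0$. For the upper bound $\limsup\xi_{1,n}\le s_1$, both arguments use concavity of $v_{1,n}$ on $[0,\xi_{1,n}]$, but packaged differently: you transport the derivative bound $|v_{1,n}'(s_1-\delta)|\to|V_1'(s_1-\delta)|>0$ forward by monotonicity of $|v_{1,n}'|$ and integrate, while the paper uses the chord inequality $v_{1,n}(x_1)\ge v_{1,n}(0)\bigl(1-x_1/\xi_{1,n}\bigr)$ directly. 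Your route has the advantage of not invoking the inflection identity at all once $C^2_{\mathrm{loc}}$ convergence is in hand; the paper's route avoids the (mild) bootstrap to $C^2$. Either way the argument closes cleanly.
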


\begin{proof}
Let $x_1 > 0$ be such that $[0, x_1) = \{x: V_1(x) > 0\}$ ($V_1$ is
identically zero in $[x_1,1]$). If $y < x_1$, $v_1(y)$ is bounded away from
zero, uniformly with respect to $\nu$, hence $v_2(x) \le C_\infty
e^{- \frac{c_2}{\sqrt{\nu}}}$ in $[0,y]$ by \eqref{comparison30}.
Therefore, $g_1(v_2^2) = o(\nu)$ uniformly in $[0,y]$, that is
\[
\frac{\lambda_1 - g_1(v_2^2)}{\nu} = o(1)
\]
uniformly on compact subsets of $[0, x_1)$. Hence, we might pass to the limit (weakly) into the equation for $v_1$: let $\varphi$ be a smooth test function, with support laying in $[0, x_1)$. The equation reads
\[
\int_0^{x_1} v_1' \varphi' dx = \int_0^{x_1} \frac{\lambda_1 - g_1(v_2^2(x))}{\nu} v_1 \varphi dx,
\]
and passing to the limit (Proposition \ref{vprimebound} ensures weak convergence in $H^1((0,1))$ of $v_i$ to $V_i$),
\[
-V_1'' = \ell_1 V_1 \quad \text{in $(0,x_1)$},
\]
$V_1'(0) = 0$ and $V_1(x_1)=0$. Thus, being $V_1$ positive, it has to be of the form $A \cos\left(\sqrt{\ell_1}x\right)$ in $(0,x_1)$, for some $A > 0$. This forces $x_1 = \pi/(2\sqrt{\ell_1})$. Moreover, $\int_0^1 V_1^2 = 1$, since by uniform convergence the $L^2$-constraint passes to the limit,
and $A$ must satisfy $A=\frac{2}{\sqrt{\pi}} \sqrt[4]{\ell_1}$. The characterization of $V_2$ is analogous.

As for the second assertion, we argue that $v_1(\xi_1) \rightarrow 0$. If not, $v_2(\xi_1) \le C_\infty e^{- \frac{c_2}{\sqrt{\nu}}} = o(\nu^{1/2})$ by \eqref{comparison30}, that is not compatible with $g_1(v_2^2(\xi_1)) = \lambda_1 \ge c_1 \nu$. Hence, $\lim \xi_1 \ge x_1$. Suppose that $\lim \xi_1 > x_1$; note that $v_1$ is concave on $(0,\xi_1)$, so $v_1(x) \ge v_1(0) + (v_1(\xi_1) - v_1(0))x/\xi_1$ in $[0,\xi_1]$. We infer
\[
\lim v_1(x_1) \ge v_1(0) \left(1 - \lim \frac{x_1}{\xi_1}\right) > 0,
\]
which contradicts $v_1(x_1) \rightarrow V_1(x_1) = 0$. Then, $\xi_1 \rightarrow x_1 = \pi/(2\sqrt{\ell_1})$.
\end{proof}

\begin{lem}\label{smallintegrals} For all $a > 2$ it holds true that
\[
\int_0^1 v_{i,n}^a v_{j,n}^2 \,dx = o(\nu_n), \qquad\text{as }n\to+\infty.
\]
\end{lem}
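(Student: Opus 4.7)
The plan is to test the equation for $v_{i,n}$ against the natural power $v_{i,n}^{a-1}$, which will yield an energy-type identity whose leading order should be shown to cancel exactly in the limit. Multiplying $-\nu_n v_{i,n}'' + g_i(v_{j,n}^2)v_{i,n} = \lambda_{i,n} v_{i,n}$ by $v_{i,n}^{a-1}$, integrating over $(0,1)$, and using the Neumann conditions to kill the boundary terms, I obtain
\begin{equation*}
\nu_n (a-1)\int_0^1 (v_{i,n}')^2 v_{i,n}^{a-2}\,dx + \int_0^1 g_i(v_{j,n}^2) v_{i,n}^a\,dx = \lambda_{i,n}\int_0^1 v_{i,n}^a\,dx.
\end{equation*}
Since \eqref{Gass} gives $v_{j,n}^2 \le C_g\, g_i(v_{j,n}^2)$, the lemma will follow if I can show that
\begin{equation*}
\frac{1}{\nu_n}\int_0^1 g_i(v_{j,n}^2) v_{i,n}^a\,dx \;=\; \frac{\lambda_{i,n}}{\nu_n}\int_0^1 v_{i,n}^a\,dx - (a-1)\int_0^1 (v_{i,n}')^2 v_{i,n}^{a-2}\,dx \;\longrightarrow\; 0.
\end{equation*}

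I would identify the limit of each term separately. By Proposition \ref{vprimebound}, $\lambda_{i,n}/\nu_n \to \ell_i$ and $v_{i,n}\to V_i$ uniformly, so $\int v_{i,n}^a \to \int V_i^a$. For the derivative term I use the strong $H^1$ convergence $v_{i,n}\to V_i$ from Remark \ref{rem:H1conv}: writing
\begin{equation*}
\int_0^1 (v_{i,n}')^2 v_{i,n}^{a-2} - (V_i')^2 V_i^{a-2}\,dx = \int_0^1 \bigl[(v_{i,n}')^2-(V_i')^2\bigr]v_{i,n}^{a-2}\,dx + \int_0^1 (V_i')^2\bigl[v_{i,n}^{a-2} - V_i^{a-2}\bigr]\,dx,
\end{equation*}
the first integrand is controlled in $L^1$ by $\|v_{i,n}'-V_i'\|_{L^2}\cdot\|v_{i,n}'+V_i'\|_{L^2}\cdot\|v_{i,n}\|_\infty^{a-2}\to 0$, and the second by $\|(V_i')^2\|_{L^1}\cdot\|v_{i,n}^{a-2}-V_i^{a-2}\|_\infty\to 0$ (both using $a>2$ and the uniform bound on $v_{i,n}$). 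Hence the target limit is $\ell_i\int V_i^a - (a-1)\int (V_i')^2 V_i^{a-2}$.

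The final step is to verify that this expression vanishes. By the explicit form of $V_i$ in \eqref{V1}--\eqref{V2}, $V_i$ satisfies $-V_i'' = \ell_i V_i$ on its support $I_i$, with $V_i'=0$ at the Neumann endpoint and $V_i = 0$ at the free-boundary endpoint. Testing this equation against $V_i^{a-1}$ and integrating by parts on $I_i$, all boundary terms vanish (either $V_i'=0$ or $V_i=0$, combined with $a-1>0$), giving exactly $(a-1)\int_{I_i}(V_i')^2 V_i^{a-2}\,dx = \ell_i\int_{I_i} V_i^a\,dx$, i.e.\ $\ell_i\int V_i^a - (a-1)\int (V_i')^2 V_i^{a-2} = 0$. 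By symmetry the same argument applies when the roles of $v_1,v_2$ are exchanged, completing the proof. The main delicate input is the \emph{strong} (rather than merely weak) $H^1$ convergence supplied by Remark \ref{rem:H1conv}; once this is in hand, the cancellation is a purely algebraic consequence of the fact that the limit profile itself is an eigenfunction of a Neumann problem on its support.
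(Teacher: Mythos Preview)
Your argument is correct and takes a genuinely different route from the paper. The paper proceeds by a direct splitting of the domain: it defines $y$ by $v_1(y)=\nu^{1/6}$, uses the comparison Lemma~\ref{comparison2} to obtain exponential smallness of $v_2$ on $[0,y-\nu^{1/6}]$, and on the remaining interval bounds $v_1^{a-2}\le v_1^{a-2}(y-\nu^{1/6})\to0$ against $\int v_1^2v_2^2=O(\nu)$. No information on the limit profile beyond uniform convergence is used.

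Your approach instead tests the equation against $v_i^{a-1}$ and reduces the claim to the exact cancellation $\ell_i\int V_i^a=(a-1)\int (V_i')^2V_i^{a-2}$, which you verify from the limiting eigenvalue equation $-V_i''=\ell_i V_i$ on the support of $V_i$. This is cleaner and more structural, but it relies on two additional inputs that the paper's proof avoids: the strong $H^1$ convergence of Remark~\ref{rem:H1conv}, and the explicit characterization \eqref{V1}--\eqref{V2} of the limit (or at least the fact that $V_i$ solves the Neumann--Dirichlet eigenvalue problem on its support). Since both of these are established \emph{before} Lemma~\ref{smallintegrals} in the paper, there is no circularity. A small bonus of your method is that the same computation goes through verbatim for $a=2$ (the boundary term $V_i'V_i^{a-1}$ still vanishes and $v_{i,n}^{a-2}\equiv1$), yielding the sharper statement $\int_0^1 v_{1,n}^2v_{2,n}^2\,dx=o(\nu_n)$, whereas the paper's splitting argument only gives $O(\nu_n)$ at that endpoint. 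Conversely, the paper's proof is more self-contained and would survive in situations where the limit profile is not yet identified.
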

\begin{proof} The assertion will be proved for $i=1$, the other case being completely analogous. Let us define $y \in (0,1)$ as the unique point such that
\[
v_1(y) = \nu^{1/6}.
\]
The point $y$ is well defined and bounded
away from zero because $v_1$ is uniformly strictly positive in a neighborhood
of $x=0$ and $v_1(1) = o(\nu)$ (by \eqref{comparison31} for $v_1$). By Lemma
\ref{comparison2} we also have $v_2(x) \le C e^{- c_1 (y-x)
\nu^{-1/3}}$ in $[0,y]$. We split the interval $[0,1]$ into two
subintervals, and exploit $\|v_i\|_\infty \le C_\infty$.

In $[0, y - \nu^{1/6}]$, by monotonicity we have
$v_2(x) \le C e^{- c_1 \nu^{-1/6}}$ and
\[
\int_0^{y - \nu^{1/6}} v_1^a v_2^2 \, dx \le C e^{- 2c_1
\nu^{-1/6}} = o(\nu).
\]

In $[y - \nu^{1/6}, 1]$, recalling that $\int_0^1 v_1^2 v_2^2
\le C_g^{-1} \lambda_1 \le C\nu$ by \eqref{Gass} and \eqref{lambdavsnu},
we obtain
\[
\int_{y - \nu^{1/6}}^1 v_1^a v_2^2 dx \leq
v_1^{a-2}\left(y - \nu^{1/6}\right) \cdot \int_0^1 v_1^2 v_2^2 \leq
v_1^{a-2}\left(y - \nu^{1/6}\right) \cdot C\nu = o(\nu),
\]
as $v_1\left(y - \nu^{1/6}\right)$ goes to zero (recall that $v_1$ converges uniformly, and that $v_1(y)\to0$).
\end{proof}
The last part of our analysis focuses on the ``interface'' between $v_1$ and
$v_2$, namely we are going to consider the point $x_m = x_{m,n} \in (0,1)$ such that
\[
m_n = v_1(x_{m,n}) = v_2(x_{m,n}).
\]
We follow ideas introduced in \cite{MR3021546}, to treat the one-dimensional
variational case.
Note that by strict monotonicity of $v_{i,n}$, $x_{m,n} \in (0,1)$ is
well-defined, and
\[
m_n \rightarrow 0, \quad x_{m,n} \rightarrow x_0 \in (0,1),
\]
in view of \eqref{V1V2zero} and the fact that $v_{1,n}$ and $v_{2,n}$ are
bounded away from zero in neighborhoods of $x=0$ and $x=1$ respectively (see
Lemma \ref{boundslemma}).

In what follows, we will write
\[
g_i(s) = \gamma_i s + h_i(s), \quad \text{for all $s \ge 0$},
\]
where $\gamma_i =g'_i(0) > 0$, $h_i(0)=0$, $h_i'(0) = 0$.
\begin{rem}\label{hexpansion} The functions $h_i$ have to be considered as ``lower order terms'' in the vanishing viscosity limit, and we will usually Taylor expand them around $s=0$, namely
\[
h_i(v^2_j(x)) = a_i(x) v^4_j(x), \quad h'_i(v^2_j) = b_i(x) v^2_j(x),
\]
where $|a_i(x)|, |b_i(x)| \le C$ for some universal constant $C > 0$ (depending on $g''$).
\end{rem}
The ``joint energy" is going to be crucial in our analysis:
\begin{multline}
T(x) := \frac{1}{\gamma_1}\left[\nu (v'_1(x))^2 + [\lambda_1 -
h_1(v_2^2(x))]v_1^2(x) + 2 \int_{x_m}^x h_1'(v_2^2(\sigma))v'_2(\sigma)
v_2(\sigma)v_1^2(\sigma) d\sigma\right] \\ +
\frac{1}{\gamma_2}\left[\nu (v'_2(x))^2 + [\lambda_2 - h_2(v_1^2(x))]v_2^2(x)
- 2 \int_x^{x_m} h_2'(v_1^2(\sigma))v'_1(\sigma) v_1(\sigma)v_2^2(\sigma) d
\sigma\right] \\ - v_1^2(x) v_2^2(x).
\end{multline}
Of course, along any pair $(v_{1,n},v_{2,n})$, $T_n(x)=T(x)$ is constant
(indeed $T_n'(x)\equiv0$).
\begin{lem} It holds
\begin{equation}\label{Tvalue}
\frac{\lambda_{1,n}}{\gamma_1} v_{1,n}^2(0) + o(\nu_n) = T_n =
\frac{\lambda_{2,n}}{\gamma_2} v_{2,n}^2(1) + o(\nu_n),
\end{equation}
as $n \rightarrow +\infty$.
\end{lem}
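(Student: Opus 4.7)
The key observation, already flagged just above the lemma, is that $T_n$ is independent of $x$. The plan is therefore to evaluate $T_n$ at the two endpoints $x=0$ and $x=1$, where the Neumann conditions $v_{i,n}'(0)=v_{i,n}'(1)=0$ kill the kinetic contributions $\nu (v_i')^2$, and to verify that all surviving pieces except $\lambda_{1,n}v_{1,n}^2(0)/\gamma_1$ (at $x=0$), respectively $\lambda_{2,n}v_{2,n}^2(1)/\gamma_2$ (at $x=1$), are $o(\nu_n)$.

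At $x=0$ the identity $T_n=T_n(0)$ expresses $T_n$ as $\lambda_{1,n}v_{1,n}^2(0)/\gamma_1$ plus the following five remainder terms: (i)~$-\gamma_1^{-1}h_1(v_{2,n}^2(0))v_{1,n}^2(0)$; (ii)~$\gamma_2^{-1}(\lambda_{2,n}-h_2(v_{1,n}^2(0)))v_{2,n}^2(0)$; (iii)~$-v_{1,n}^2(0)v_{2,n}^2(0)$; (iv) and (v), the two integrals from $0$ to $x_m$. To treat (i)--(iii) I would combine: the uniform bound $\|v_{i,n}\|_\infty\le C_\infty$ from Lemma \ref{vboundinfty}; the estimate $\lambda_{i,n}=O(\nu_n)$ from Proposition \ref{vprimebound}; the Taylor expansion $h_i(s)=O(s^2)$ of Remark \ref{hexpansion}; and, decisively, the super-exponential decay $v_{2,n}(0)=o(\nu_n^a)$ for every $a>0$ given by Remark \ref{comparison3}, formula \eqref{comparison31}. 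These together make (i), (ii) and (iii) negligible to any polynomial order in $\nu_n$.

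For the integral remainders (iv), (v) I would use the uniform bound $|h_i'(v_j^2)|\le C v_j^2$ from Remark \ref{hexpansion} together with the uniform Lipschitz bound $\|v_{i,n}'\|_\infty\le C'_\infty$ of Proposition \ref{vprimebound}, getting
\begin{equation*}
\Bigl|\int_0^{x_m} h_1'(v_{2,n}^2)\,v_{2,n}'\,v_{2,n}\,v_{1,n}^2\,d\sigma\Bigr|\lesssim \int_0^1 v_{1,n}^2 v_{2,n}^3, \qquad \Bigl|\int_0^{x_m} h_2'(v_{1,n}^2)\,v_{1,n}'\,v_{1,n}\,v_{2,n}^2\,d\sigma\Bigr|\lesssim \int_0^1 v_{1,n}^3 v_{2,n}^2,
\end{equation*}
and both right-hand sides are exactly $o(\nu_n)$ by Lemma \ref{smallintegrals} with $a=3$.

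The evaluation at $x=1$ is symmetric: the Neumann condition again removes the kinetic terms, the surviving main piece is $\lambda_{2,n}v_{2,n}^2(1)/\gamma_2$, and the roles of the two components exchange (now the fast-decay estimate gives $v_{1,n}(1)=o(\nu_n^a)$, while $\lambda_{1,n}=O(\nu_n)$); the five analogous remainders are bounded exactly as above, the two integrals again controlled by Lemma \ref{smallintegrals}. Combining $T_n(0)=T_n=T_n(1)$ gives the double identity. There is no genuine obstacle here: the whole proof is bookkeeping of which terms in $T_n(0)$ and $T_n(1)$ are main-order and which are $o(\nu_n)$. The only nontrivial point is to obtain $o(\nu_n)$ rather than $O(\nu_n)$ for the cross integrals, but this is precisely what Lemma \ref{smallintegrals} was designed for.
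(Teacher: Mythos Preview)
Your proposal is correct and follows essentially the same approach as the paper: evaluate the constant $T_n$ at the endpoints, kill the kinetic terms via the Neumann conditions, use the super-exponential decay \eqref{comparison31} for the boundary cross-terms, and invoke Lemma~\ref{smallintegrals} with $a=3$ (together with $|h_i'(s)|\le Cs$ and the uniform Lipschitz bound) to make the two integral remainders $o(\nu_n)$. The paper's proof is organized slightly differently---it first disposes of both integrals uniformly in $x$ via \eqref{terminibrutti} and then writes out $T(0)$ and $T(1)$---but the ingredients and the logic are identical.
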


\begin{proof} Note firstly that
\begin{equation}\label{terminibrutti}
\left| \int_{x_m}^x h_i'(v_j^2 )v'_j v_j v_i^2 d\sigma \right| \le \int_0^1 |b_i v'_j| v_j^3 v_i^2 d\sigma \le C\int_0^1 v_j^3 v_i^2 = o(\nu),
\end{equation}
by Lemma \ref{smallintegrals} and Remark \ref{hexpansion}. Note also that $v_2^2(0) = o(\nu)$ by \eqref{comparison31}.

Therefore, being $v_1$ bounded by $C_\infty$,
\[
\begin{split}
T(0) = & \frac{\lambda_1}{\gamma_1} v_1^2(0) - \frac{ v_1^2(0) }{\gamma_1}h_1(v_2^2(0)) + \frac{2}{\gamma_1}  \int_{x_m}^0 h_1'(v_2^2)v'_2 v_2v_1^2 d\sigma \\ & +
\frac{\lambda_2 - h_2(v_1^2(0))}{\gamma_2}v_2^2(0)  - \frac{2}{\gamma_2} \int_0^{x_m} h_2'(v_1^2)v'_1 v_1v_2^2 d\sigma - v_1^2(0)v_2^2(0) =
\frac{\lambda_1}{\gamma_1} v_1^2(0) + o(\nu).
\end{split}
\]
Similarly,
\[
T(1) = \frac{\lambda_2}{\gamma_2} v_2^2(1) + o(\nu). \qedhere
\]
\end{proof}
\begin{lem}\label{m4nufinite} It holds true that
\[
\limsup_{n \rightarrow +\infty}\frac{m_n^4}{\nu_n} < +\infty.
\]
\end{lem}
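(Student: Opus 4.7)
The plan is to exploit the conservation of the ``joint energy'' $T_n$ and evaluate it at the interface point $x = x_{m,n}$, where the negative coupling term $-v_{1}^2 v_{2}^2$ collapses to exactly $-m_n^4$. This isolates $m_n^4$ on one side of an identity whose other side is forced to be $O(\nu_n)$ by the previous estimates.

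Concretely, first I would use the fact that the integrals running from $x_m$ to $x_m$ in the definition of $T$ vanish, so
\[
T_n = T(x_{m,n}) = \frac{\nu_n (v_{1,n}'(x_{m,n}))^2}{\gamma_1} + \frac{\nu_n (v_{2,n}'(x_{m,n}))^2}{\gamma_2} + \frac{\lambda_{1,n}\,m_n^2}{\gamma_1} + \frac{\lambda_{2,n}\,m_n^2}{\gamma_2} - \frac{h_1(m_n^2)\,m_n^2}{\gamma_1} - \frac{h_2(m_n^2)\,m_n^2}{\gamma_2} - m_n^4.
\]
Next I would control each term separately. Identity \eqref{Tvalue}, together with $v_{1,n}(0) \le C_\infty$ from Lemma \ref{vboundinfty} and $\lambda_{i,n} = O(\nu_n)$ from \eqref{lambdavsnu}, yields $T_n = O(\nu_n)$. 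The estimate \eqref{energyest}, combined again with $\|v_{i,n}\|_\infty \le C_\infty$ and $\lambda_{i,n} = O(\nu_n)$, gives $\nu_n (v_{i,n}'(x_{m,n}))^2 \le \nu_n \|v_{i,n}'\|_\infty^2 \le \lambda_{i,n}\|v_{i,n}\|_\infty^2 = O(\nu_n)$. The terms $\lambda_{i,n} m_n^2$ are bounded by $C\nu_n$ since $m_n \to 0$, and by Remark \ref{hexpansion} (or directly from $h_i(0) = h_i'(0) = 0$, so $h_i(s) = O(s^2)$) one has $h_i(m_n^2)\,m_n^2 = O(m_n^6)$.

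Inserting all these estimates into the identity above and solving for $m_n^4$, I obtain
\[
m_n^4 \;=\; O(\nu_n) + O(m_n^6).
\]
Since $m_n \to 0$, for $n$ large enough we have $|O(m_n^6)| \le \tfrac{1}{2} m_n^4$, which can be absorbed into the left-hand side, giving $m_n^4 \le C\nu_n$ and hence the claimed $\limsup_{n\to+\infty} m_n^4/\nu_n < +\infty$.

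There is no serious obstacle in this plan: the only thing one must be careful about is that every $O(\cdot)$ constant is uniform in $n$, which is ensured by the uniform $L^\infty$ bound of Lemma \ref{vboundinfty}, the uniform bound $\lambda_{i,n}/\nu_n \to \ell_i$ of Proposition \ref{vprimebound}, and the fact that $h_i$ is a fixed $C^2$ function so its Taylor remainder at $0$ is controlled independently of $n$.
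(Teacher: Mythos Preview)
Your argument is correct and in fact more direct than the paper's. Both proofs hinge on the same idea: evaluate the conserved quantity $T$ at $x=x_{m,n}$, where the integral terms drop out and the coupling term collapses to $-m_n^4$, then compare with the value of $T$ at an endpoint given by \eqref{Tvalue}.

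The difference lies in how the derivative contributions $\nu_n (v'_{i,n}(x_{m,n}))^2$ are handled. You bound them directly by $O(\nu_n)$ using \eqref{energyest} (or equivalently the uniform Lipschitz bound of Proposition~\ref{vprimebound}), and then close with the algebraic absorption $m_n^4 = O(\nu_n) + O(m_n^6)$. The paper instead argues by contradiction: assuming $m_n^4/\nu_n \to \infty$, it performs a blow-up $\tilde v_i(x) = m_n^{-1} v_i(x_{m,n} + x\sqrt{\nu_n}/m_n)$, shows $\tilde v_i \to 1$ and $\tilde v_i' \to 0$, and then finds that $T(x_{m,n})/m_n^4 \to -1$ while the same quantity is nonnegative by \eqref{Tvalue}. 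Your route is shorter and avoids the rescaling entirely; the paper's blow-up is not really needed here, though an analogous rescaling \emph{is} essential in the companion lemma proving the lower bound $\liminf m_n^4/\nu_n > 0$, which is perhaps why the authors chose a uniform style.
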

\begin{proof} Arguing by contradiction,
\[
\frac{m^4}{\nu} \rightarrow \infty,
\]
possibly along a subsequence. Let $\tilde{v}_i(x) := \frac{1}{m} v_i\left(x_m + x \frac{\sqrt{\nu}}{m}\right)$. Then, $\tilde{v}_i$ solves
\[
- \tilde{v}''_i = \left(\frac{\lambda_i}{m^2} - \gamma_i \tilde{v}_j^2 - \frac{h_i(m^2 \tilde{v}_j^2)}{m^2}\right) \tilde{v}_i, \quad \text{in $I_\nu = \left(-\frac{m^2}{\nu}x_m, (1-x_m)\frac{m^2}{\nu}\right).$}
\]
Note that $I_\nu$ tends to the whole real line as $\nu \rightarrow 0$ ($x_m$ is bounded away from $x=0$ and $x=1$, and $m^{2}\nu^{-1} \rightarrow \infty$), $\tilde{v}_i(0) = 1$ and
\[
|\tilde{v}_i(y) - \tilde{v}_i(0)| \le |y| \|v_i'\|_\infty \frac{\sqrt{\nu}}{m^2} \rightarrow 0, \quad \text{for all $y \in [a,b] \subset I_\nu$}.
\]
Thus, $\tilde{v}_i$ converges uniformly on compact subsets of $I_\nu$, as $\|v_i'\|_\infty$ is bounded by $C'_\infty$. Moreover, $\lambda_i m^{-2} \rightarrow 0$ (by \eqref{lambdavsnu}) and $h_i(m^2 \tilde{v}_j^2){m^{-2}} \rightarrow 0$ uniformly on compact subsets of $I_\nu$ (see Remark \eqref{hexpansion}). Hence,
\begin{equation}\label{tildevtoone}
\tilde{v}_i \rightarrow 1, \quad \tilde{v}'_i \rightarrow 0 \quad \text{locally uniformly.}
\end{equation}

We then have
\begin{multline}\label{Ttilde1} \sum_{i=1,2}\frac{1}{\gamma_i}\left[(\tilde{v}'_i(0))^2 + \left(\frac{\lambda_i}{m^2} - \frac{h_i(m^2\tilde{v}_j^2(0))}{m^2}\right)\tilde{v}_i^2(0)\right]
 - \tilde{v}_1^2(0) \tilde{v}_2^2(0) = \\ \frac{T(x_m)}{m^4} = \frac{\lambda_1}{\gamma_1 m^4} v_1^2(0) + o(\nu/m^4) \ge \frac{c_1 \nu}{\gamma_1 m^4} v_1^2(0) + o(\nu/m^4) \ge 0
\end{multline}
by \eqref{Tvalue} and \eqref{lambdavsnu} when $\nu$ is close enough to zero. On the other hand, the left hand side of \eqref{Ttilde1} goes to $-1$ as $\nu \rightarrow 0$ by \eqref{tildevtoone}, a contradiction.
\end{proof}
\begin{lemma} As $n \rightarrow +\infty$, there exists $L > 0$ such that
\begin{equation}\label{mnu}
\frac{m_n^4}{\nu_n} \rightarrow L.
\end{equation}
Moreover,
\begin{equation}\label{vpbelow}
\liminf |v'_{i,n}(x_{m,n})| > 0.
\end{equation}
\end{lemma}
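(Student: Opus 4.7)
By Lemma \ref{m4nufinite}, $m_n^4/\nu_n$ is bounded, and by Proposition \ref{vprimebound}, $|v_{i,n}'(x_{m,n})| \le C'_\infty$; so up to a subsequence, $m_n^4/\nu_n \to L \in [0,\infty)$ and $v_{i,n}'(x_{m,n}) \to c_i$. The plan is to show $L > 0$, then $c_i \ne 0$ for both $i$, and finally that $L$ is uniquely determined (which upgrades subsequential convergence to the full sequence). The starting point is the identity obtained by evaluating the conserved joint energy $T$ at the interface $x = x_m$: since the integral contributions to $T$ vanish at $x_m$ by the choice of the base point,
\[
\frac{T}{\nu} = \sum_i \frac{(v_i'(x_m))^2}{\gamma_i} + \sum_i \frac{(\lambda_i - h_i(m^2))m^2}{\gamma_i \nu} - \frac{m^4}{\nu}.
\]
The middle sum is $o(1)$: $\lambda_i m^2/\nu = (\lambda_i/\nu)m^2 \to 0$ by \eqref{lambdavsnu}, and $h_i(m^2)m^2/\nu = O(m^6/\nu) = O(m^2 \cdot m^4/\nu) \to 0$ by Remark \ref{hexpansion} and Lemma \ref{m4nufinite}. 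Combined with $T/\nu \to L_T := \ell_1 V_1^2(0)/\gamma_1 > 0$ from \eqref{Tvalue}, this yields $\sum_i c_i^2/\gamma_i = L_T + L$, so at least one $c_i$ is nonzero.

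To rule out $L = 0$, assume the contrary and rescale at the intrinsic scale $m$: set $\check v_i(x) := v_i(x_m + x m)/m$. Then $\check v_i(0) = 1$, $\check v_i'(0) = v_i'(x_m) \to c_i$ (bounded by $C'_\infty$), and a direct computation gives
\[
\check v_i''(x) = \Big(\gamma_i \tfrac{m^4}{\nu}\check v_j^2 + O(\tfrac{m^6}{\nu}\check v_j^4) - \tfrac{\lambda_i m^2}{\nu}\Big)\check v_i
\]
on the rescaled interval $[-x_m/m, (1-x_m)/m]$, which invades $\R$. All coefficients on the right vanish uniformly on compact sets, since $|\check v_j(x)| \le 1 + C'_\infty |x|$; hence, by Ascoli--Arzel\`a, $\check v_i \to \check V_i$ in $C^1_{\loc}(\R)$ with $\check V_i'' = 0$, i.e.\ $\check V_i(x) = 1 + c_i x$. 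Nonnegativity on all of $\R$ forces $c_i = 0$ for both $i$, contradicting the identity above; thus $L > 0$. Having $L > 0$, one computes $v_i''(x_m) = (g_i(m^2) - \lambda_i)m/\nu \sim \gamma_i m^3/\nu = \gamma_i L/m \to +\infty$; so if, by contradiction, $c_1 = 0$, the second-order Taylor expansion
\[
v_1'(x_m + \delta) = v_{1,n}'(x_m) + v_1''(x_m)\delta + O\Big(\tfrac{L\delta^2}{m^2}\Big)
\]
(using $v_1''' = O(L/m^2)$) evaluated at $\delta = Km$ with $K > 0$ small and fixed gives $v_1'(x_m + Km) = v_{1,n}'(x_m) + KL + O(LK^2) > 0$ for $n$ large. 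This contradicts the strict monotonicity of $v_1$ on $(0,1)$, which holds because $(v_1,v_2) \in \mathcal{C}_1$ has no interior critical point. The symmetric argument, evaluated at $\delta = -Km$ for $v_2$, excludes $c_2 = 0$, establishing \eqref{vpbelow}.

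For \eqref{mnu} one performs the correct blow-up $\tilde v_i(x) := v_i(x_m + x \sqrt{\nu}/m)/m$: since $\sqrt{\nu}/m \to 0$, the rescaled interval tends to $\R$, $\tilde v_i(0) = 1$, $\tilde v_i'(0) = v_i'(x_m)\sqrt{\nu}/m^2 \to c_i/\sqrt{L}$, and $\tilde v_i$ is locally Lipschitz. Passing to the limit (with $\lambda_i/m^2, h_i(m^2\tilde v_j^2)/m^2 \to 0$) yields
\[
-\tilde V_i'' = -\gamma_i \tilde V_j^2 \tilde V_i \quad\text{in }\R, \qquad \tilde V_i(0) = 1,\quad \tilde V_i \ge 0,\quad \tilde V_i'(0) = c_i/\sqrt{L}.
\]
The autonomous system admits the Hamiltonian $E = \sum_i (\tilde V_i')^2/\gamma_i - \tilde V_1^2 \tilde V_2^2$, whose value at $0$ equals $(L_T+L)/L - 1 = L_T/L$. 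Convexity $\tilde V_i'' \ge 0$ together with nonnegativity on $\R$ and the sign conditions $\tilde V_1'(0) < 0 < \tilde V_2'(0)$ force a heteroclinic structure: on each side of the origin one component decays super-exponentially while the other grows linearly. Matching $E$ at $\pm\infty$ with its value at $0$ via standard phase-plane analysis then gives $c_i^2 = L\gamma_i$ for both $i$; substituting into $\sum c_i^2/\gamma_i = L_T + L$ forces $L = L_T$. Since every subsequence gives the same value, the full sequence converges.

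\textbf{Main obstacle.} The delicate step is the last one: rigorously identifying the heteroclinic asymptotics of the coupled convex blow-up system under the nonnegativity constraint, and using the energy matching at $\pm\infty$ to pin down the initial slopes. The rest of the argument, while technical, consists of routine rescaling, Taylor expansions and Ascoli-type compactness once the small-parameter bookkeeping ($\nu$ vs.\ $m^2$ vs.\ $m^4$ vs.\ $\lambda_i$) is set up carefully.
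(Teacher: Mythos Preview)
Your argument for $L>0$ is correct and coincides with the paper's: rescale by $m$, pass to the affine limit, use nonnegativity on $\R$ to kill both slopes, and contradict the energy identity $T(x_m)/\nu\to L_T>0$.

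For $\liminf |v_i'(x_m)|>0$ you take a genuinely different route. The paper rescales again by $m$, passes to the limit system $W_i''=L\gamma_i W_j^2 W_i$, and rules out $W_i'(0)=0$ by a short qualitative argument (convexity plus monotonicity plus $W_j(0)=1$ force a contradiction). Your Taylor expansion at scale $Km$ is more computational but perfectly valid: the bound $|v_i'''|=O(m^2/\nu)$ on $[x_m-Km,x_m+Km]$ is correct (both $v_1,v_2=O(m)$ there, and $g_i(v_j^2)-\lambda_i=O(m^2)$), and the second-order term $v_i''(x_m)\cdot Km\to \gamma_i LK$ dominates for $K$ small. Either approach works; yours avoids naming the limit ODE, the paper's makes the geometric mechanism clearer.

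Where you go astray is the third step. The paper does \emph{not} prove, and does not need, that $L$ is independent of the subsequence: throughout Section~7 one works along a fixed subsequence (already extracted in Proposition~\ref{vprimebound}), and a further extraction so that $m^4/\nu\to L>0$ is all that is required. Your attempt to pin down $L=L_T$ has a real gap. Evaluating the conserved quantity $E=\sum_i(\tilde V_i')^2/\gamma_i-\tilde V_1^2\tilde V_2^2$ at $\pm\infty$ yields $(\tilde V_2'(+\infty))^2/\gamma_2=(\tilde V_1'(-\infty))^2/\gamma_1=L_T/L$, i.e.\ information on the \emph{asymptotic} slopes, not on $\tilde V_i'(0)=c_i/\sqrt{L}$. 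By convexity $|\tilde V_i'(0)|<|\tilde V_i'(\pm\infty)|$ in general, so the conclusion $c_i^2=L\gamma_i$ does not follow from ``energy matching at $\pm\infty$'' alone. To actually pin down $L$ one would need a uniqueness/classification result for nonnegative monotone entire solutions of $W_i''=\gamma_i W_j^2 W_i$ with $W_i(0)=1$, which you have not supplied. Simply drop this step: once $L>0$ along the chosen subsequence, the lemma (as stated and used in the paper) is proved.
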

\begin{proof}  Let us assume by contradiction that
\[
\frac{m^4}{\nu} \rightarrow 0.
\]
Let $\tilde{v}_i(x) := \frac{1}{m} v_i\left(x_m + m \, x\right)$. Then, $\tilde{v}_i$ solves
\[
- \tilde{v}''_i = \frac{m^4}{\nu}\left(\frac{\lambda_i}{m^2} - \gamma_i \tilde{v}_j^2 - \frac{h_i(m^2 \tilde{v}_j^2)}{m^2}\right) \tilde{v}_i, \quad \text{in $I_\nu = \left(-\frac{x_m}{m}, \frac{1-x_m}{m}\right).$}
\]
Note that $I_\nu$ tends to the whole real line as $\nu \rightarrow 0$, $\tilde{v}_i(0) = 1$ and $\|\tilde{v}_i' \|_\infty= \|{v}_i' \|_\infty \le C'_\infty$, so $\tilde{v}_i$ converges uniformly on compact subsets of $I_\nu$. Moreover, $\lambda_i m^{-2} \rightarrow 0$ and $h_i(m^2 \tilde{v}_j^2){m^{-2}} \rightarrow 0$ uniformly on compact subsets of $I_\nu$ (as in the proof of the Lemma \ref{m4nufinite}). Hence, $\tilde{v}_i \rightarrow W_i$ locally in $C^2(\Rset)$, and $W_i'' = 0$. Since $W_i$ is positive we have
\begin{equation}\label{tildevtoone2}
W_i \equiv 1 \quad \text{in $\Rset$, $i=1,2$.}
\end{equation}
Therefore,
\begin{multline}\label{Ttilde2} \sum_{i=1,2}\frac{1}{\gamma_i}\left[(\tilde{v}'_i(0))^2 + \frac{m^4}{\nu} \left(\frac{\lambda_i}{m^2} - \frac{h_i(m^2\tilde{v}_j^2(0))}{m^2}\right)\tilde{v}_i^2(0)\right]
 - \frac{m^4}{\nu} \tilde{v}_1^2(0) \tilde{v}_2^2(0) = \\ \frac{T(x_m)}{\nu} = \frac{\lambda_1}{\gamma_1 \nu} v_1^2(0) + o(1) \ge \frac{c_1}{\gamma_1} v_1^2(0) + o(1) > 0
\end{multline}
by \eqref{Tvalue} and \eqref{lambdavsnu} when $\nu \rightarrow 0$. However, the left hand side of \eqref{Ttilde1} goes to zero as $\nu \rightarrow 0$ by \eqref{tildevtoone2}, that is not possible. Hence, $L > 0$.

To prove \eqref{vpbelow} we proceed as before, setting
$\tilde{v}_i(x) := \frac{1}{m} v_i\left(x_m + m \, x\right)$. We have that
$\tilde{v}_i \rightarrow W_i$ locally in $C^2(\Rset)$, and $(W_1,W_2)$ solve
\[
\begin{cases}
W_1'' = L \gamma_1 W_2^2 W_1, \\
W_2'' = L \gamma_2 W_1^2 W_2.
\end{cases}
\]
in $\Rset$. $W_1$ and $W_2$ are also also positive and monotone, so $W_i'(0) \neq 0$. We conclude by observing that $|v_i'(x_m)| = |\tilde{v}'_i(0)| \rightarrow |W_i'(0)| > 0$.
\end{proof}

\begin{lem} As $n \rightarrow +\infty$, it holds true that
\[
\xi_{1,n} \le x_{m,n} \le \xi_{2,n},
\]
and
\begin{equation}\label{xilimit2}
\xi_{1,n} \rightarrow x_0, \quad \xi_{2,n} \rightarrow x_0.
\end{equation}
\end{lem}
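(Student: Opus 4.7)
The plan is to split the claim into the ordering $\xi_{1,n}\le x_{m,n}\le \xi_{2,n}$ and the convergence $\xi_{i,n}\to x_0$, deriving both from a combination of the already-established limits $\xi_{1,n}\to \pi/(2\sqrt{\ell_1})$ and $\xi_{2,n}\to 1-\pi/(2\sqrt{\ell_2})$ (equation \eqref{xilimit}), the asymptotic $m_n^4/\nu_n\to L>0$ from \eqref{mnu}, and the non-degeneracy \eqref{vpbelow} of the derivatives of $v_{i,n}$ at the crossing point.

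For the ordering, my strategy is to exploit the defining identity at the inflection point. Since $v_{1,n}''(\xi_{1,n})=0$, the equation in \eqref{mainsys} gives $g_1(v_{2,n}^2(\xi_{1,n}))=\lambda_{1,n}$. By \eqref{Gass} this yields $v_{2,n}^2(\xi_{1,n})\le C_g\lambda_{1,n}\le C\nu_n$ (using $\lambda_{1,n}/\nu_n\to\ell_1$ from \eqref{lambdavsnu}). On the other hand $v_{2,n}^2(x_{m,n})=m_n^2\sim\sqrt{L\nu_n}$ by \eqref{mnu}, and since $\sqrt{L\nu_n}\gg\nu_n$ as $\nu_n\to0$, we obtain $v_{2,n}(\xi_{1,n})<v_{2,n}(x_{m,n})$ for $n$ large. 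Monotonicity of $v_{2,n}$ then forces $\xi_{1,n}<x_{m,n}$, and the symmetric argument using $v_{1,n}$ at $\xi_{2,n}$ and at $x_{m,n}$ gives $x_{m,n}<\xi_{2,n}$.

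For the convergence, passing to the limit in the ordering yields $\pi/(2\sqrt{\ell_1})\le x_0\le 1-\pi/(2\sqrt{\ell_2})$, and what remains is to exclude the existence of a ``gap'' between the supports of $V_1$ and $V_2$. Suppose, for contradiction, that $\pi/(2\sqrt{\ell_1})<x_0$ and fix some $x^*\in(\pi/(2\sqrt{\ell_1}),x_0)$. For $n$ large we have $\xi_{1,n}<x^*<x_{m,n}$, so $x^*$ lies in the convexity region $[\xi_{1,n},1]$ of $v_{1,n}$ (recall the sign analysis preceding \eqref{x1ineq}). The tangent-line inequality at $x_{m,n}$ then gives
\[
v_{1,n}(x^*)\ge v_{1,n}(x_{m,n})+v_{1,n}'(x_{m,n})(x^*-x_{m,n})=m_n+|v_{1,n}'(x_{m,n})|(x_{m,n}-x^*).
\]
By \eqref{vpbelow}, $|v_{1,n}'(x_{m,n})|\ge c>0$ for $n$ large, and $x_{m,n}-x^*\to x_0-x^*>0$; hence $\liminf_n v_{1,n}(x^*)>0$. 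This contradicts the uniform convergence $v_{1,n}(x^*)\to V_1(x^*)=0$, since $x^*$ lies outside the support of $V_1$ described in \eqref{V1}. Therefore $\pi/(2\sqrt{\ell_1})=x_0$, and the symmetric argument on the $v_2$ side yields $1-\pi/(2\sqrt{\ell_2})=x_0$. The main technical point is precisely this ruling-out of the gap: convexity of $v_{1,n}$ past $\xi_{1,n}$, combined with the quantitative lower bound on $|v_{1,n}'(x_{m,n})|$ from the blow-up analysis, quantitatively forces $v_{1,n}$ to stay bounded away from $0$ strictly to the right of $\pi/(2\sqrt{\ell_1})$, which is incompatible with the limit profile $V_1$ vanishing there.
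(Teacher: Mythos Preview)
Your proof is correct. The ordering argument $\xi_{1,n}\le x_{m,n}\le \xi_{2,n}$ is essentially identical to the paper's: compare $v_{2,n}(\xi_{1,n})\lesssim \nu_n^{1/2}$ (from the inflection-point identity and \eqref{lambdavsnu}) with $v_{2,n}(x_{m,n})=m_n\sim \nu_n^{1/4}$ (from \eqref{mnu}), and conclude by monotonicity.

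For the convergence $\xi_{i,n}\to x_0$, however, you take a genuinely different route. The paper does \emph{not} invoke the previously established limits \eqref{xilimit}; instead it argues by contradiction that $\xi_{2,n}-\xi_{1,n}\to 0$ directly: assuming a persistent gap $4\eta$, it uses convexity of $v_{2,n}$ on $(0,\xi_{2,n})$ together with \eqref{vpbelow} to get $v_{2,n}(\xi_{1,n}+3\eta)\ge c\eta>0$, then applies the exponential decay estimate of Remark~\ref{comparison3} to force $v_{1,n}(\xi_{1,n}+3\eta)=o(\nu^{1/2})$, which contradicts $v_{1,n}^2(\xi_{2,n})\gtrsim \nu_n$ via monotonicity. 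Your argument instead feeds \eqref{xilimit} into the ordering to get $\pi/(2\sqrt{\ell_1})\le x_0\le 1-\pi/(2\sqrt{\ell_2})$, and rules out a gap by a pure convexity/tangent-line bound on $v_{1,n}$ at $x_{m,n}$, contradicting uniform convergence to $V_1$. Your approach is shorter and avoids the comparison lemma entirely; the paper's approach is more self-contained (it does not need \eqref{xilimit}) and, as a byproduct, yields the slightly stronger statement $\xi_{2,n}-\xi_{1,n}\to 0$ without reference to the limit profiles.
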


\begin{proof} Since $\xi_1$ is the inflection point of $v_1$ we have
\[
C_g^{-1} v_2^2(\xi_1) \le g_1(v_2^2(\xi_1)) = \lambda_1 \le C'_1 \nu,
\]
also by invoking \eqref{Gass} and \eqref{lambdavsnu}. Hence, $v_2(\xi_1) \le \sqrt{(C_g C'_1)\nu}$, but $v_2(x_m) = m \sim \sqrt[4]{L \nu}$ by \eqref{mnu}, so $ v_2(\xi_1) \le v_2(x_m)$ for $\nu$ sufficiently small. Monotonicity of $v_2$ implies that $\xi_1 \le x_m$, while $x_m \le \xi_2$ is obtained by an analogous argument at the inflection point $\xi_2$ of $v_2$.

Suppose now that $\xi_2 - \xi_1 = 4\eta$ and $\eta$ is uniformly bounded away from zero as $\nu \rightarrow 0$. Assume, without loss of generality, that $x_m \in [\xi_1, \xi_1 + 2\eta]$ (on the other hand, if $x_m \in [\xi_1 + 2\eta, \xi_2]$ we interchange the roles of $v_1$ and $v_2$). Note that $\xi_2$ is the inflection point of $v_2$, so $v_2$ is convex on $(0,\xi_2)$, which provides $v_2(x) \ge v_2(x_m) + v_2'(x_m)(x-x_m)$ for all $x \in (0,\xi_2)$. Therefore,
\[
v_2(\xi_1 + 3 \eta) \ge v_2(x_m) + v_2'(x_m)(\xi_1 + 3 \eta -x_m) \ge c(\xi_1 + 3 \eta -x_m) \ge c \eta > 0,
\]
for some positive $c$ in view of \eqref{vpbelow}. Now we reason as in Remark \ref{comparison3} (in particular we apply \eqref{comparison30} with $v_1$ and $v_2$ interchanged) to get
\[
v_1(\xi_1 + 3\eta) \le C_\infty e^{- \frac{c_2}{\sqrt{\nu}}} = o (\nu^{1/2}),
\]
but $C_g^{-1} v_1^2(\xi_2) \ge g_2(v_1^2(\xi_2)) = \lambda_2 \ge c_2 \nu$ (again by \eqref{Gass} and \eqref{lambdavsnu}), so $v_1(\xi_2) \ge v_1(\xi_1 + 3\eta)$ as $\nu \rightarrow 0$. Being $v_1$ decreasing, $\xi_2 \le \xi_1 + 3\eta = \xi_2 - \eta$, which is impossible. Then, $0 \le \xi_2 - \xi_1 \rightarrow 0$ follows, and the second assertion is proved as $x_m \rightarrow x_0$.
\end{proof}
\begin{proof}[Proof of Theorem \ref{thm:intro_S1}]
In view of Propositions \ref{prop:limiting_problem}, \ref{vprimebound} and Remark \ref{rem:H1conv},
the theorem will follow once we show that the following equalities hold:
\[
\frac{\ell_2}{\ell_1}= \left(\frac{\gamma_2}{\gamma_1} \right)^{2/3}
\qquad\text{and}\qquad
x_0 = \dfrac{\sqrt[3]{\gamma_2}}{\sqrt[3]{\gamma_1}+\sqrt[3]{\gamma_2}}.
\]
To this aim, we put together all the asymptotic information (as $\nu \rightarrow 0$) we obtained so far. Firstly, $\pi v_1^2(0) \sim 4 \sqrt{\ell_1}$ and $\pi v_2^2(1) \sim 4 \sqrt{\ell_2}$ by \eqref{V1} and \eqref{V2}. Hence, if we divide \eqref{Tvalue} by $\nu$ we obtain
\begin{equation}\label{ell1ell2}
\frac{\ell_1 \sqrt{\ell_1}}{\gamma_1} = \frac{\ell_2 \sqrt{\ell_2}}{\gamma_2},
\end{equation}
which is the first stated equality. Then, $x_0 = \frac{\pi}{2\sqrt{\ell_1}}$ by \eqref{xilimit} and \eqref{xilimit2}. Moreover,
\[
\frac{\pi}{2\sqrt{\ell_1}} + \frac{\pi}{2\sqrt{\ell_2}} = 1.
\]
By plugging \eqref{ell1ell2} in the last equality we conclude.
\end{proof}

\subsection*{Funding}
Work partially supported  by the PRIN-2012-74FYK7 Grant:
``Variational and perturbative aspects of nonlinear differential problems'',
by the ERC Advanced Grant  2013 n. 339958:
``Complex Patterns for Strongly Interacting Dynamical Systems - COMPAT'',
and by the INDAM-GNAMPA grant ``Analisi Globale, PDEs e Strutture Solitoniche'' (2015).

\small


\medskip
\begin{flushright}
\noindent \verb"marco.cirant@unimi.it"\\
Dipartimento di Matematica, Universit\`a di Milano\\
via Cesare Saldini 50, 20133 Milano (Italy)

\smallskip

\noindent \verb"gianmaria.verzini@polimi.it"\\
Dipartimento di Matematica, Politecnico di Milano\\
piazza Leonardo da Vinci 32, 20133 Milano (Italy)
\end{flushright}

\end{document}